\pgfplotsset{compat=newest}
\pgfplotsset{compat=1.7} 
  \theoremstyle{plain}             
  \newtheorem{theorem}{Theorem}[section]
  \newtheorem{assumption}[theorem]{Assumption}
  \newtheorem{remark}[theorem]{Remark}
  \newtheorem{lemma}[theorem]{Lemma}
  \newtheorem{proposition}[theorem]{Proposition}
  \newtheorem{definition}[theorem]{Definition}
  \newtheorem{example}[theorem]{Example}
\newcommand{\bs}[1]{\boldsymbol{#1}}
\renewcommand*{\Gamma}{\varGamma}
\renewcommand*{\Delta}{\varDelta}
\renewcommand*{\Theta}{\varTheta}
\renewcommand*{\Lambda}{\varLambda}
\renewcommand*{\Xi}{\varXi}
\renewcommand*{\Pi}{\varPi}
\renewcommand*{\Sigma}{\varSigma}
\renewcommand*{\Upsilon}{\varUpsilon}
\renewcommand*{\Phi}{\varPhi}
\renewcommand*{\Psi}{\varPsi}
\renewcommand*{\Omega}{\varOmega}
\newcommand*{\latinabbr}[1]{#1}
\newcommand*{\ie}{\latinabbr{i.e.}\;} 
\newcommand*{\eg}{\latinabbr{e.g.}\;} 
\DeclareMathOperator{\ddiv}{div}
\DeclareMathOperator{\argmin}{argmin}
\DeclareMathOperator{\Span}{span}
\DeclareMathOperator{\err}{err}
\newcommand*{\abs}[2][{}]{\ensuremath{#1\lvert#2#1\rvert}}
\newcommand*{\norm}[2][{}]{\ensuremath{#1\lVert#2#1\rVert}}
\def\beq{\begin{equation}}
\def\eeq{\end{equation}}
\def\mcB{{\mathcal{B}}}
\def\mcG{{\mathcal{G}}}
\def\mcN{{\mathcal{N}}}
\def\mcO{{\mathcal{O}}}
\def\mcU{{\mathcal{U}}}
\def\mcV{{\mathcal{V}}}
\def\mcX{{\mathcal{X}}}
\def\mcY{{\mathcal{Y}}}
\def\bbR{\mathbb{R}}
\def\bbN{\mathbb{N}}
\def\bbP{\mathbb{P}}
\DeclareMathOperator{\ComputeTrafo}{ComputeTransport}
\DeclareMathOperator{\GenerateONB}{GenerateONB}
\DeclareMathOperator{\GenerateSamples}{GenerateSamples}
\DeclareMathOperator{\ReconstructTT}{ReconstructTT}
\newcommand{\on}[1]{{\operatorname{#1}}}
\newcommand{\isdef}{\mathrel{\mathrel{\mathop:}=}}
\newlength{\myl}
\newlength{\myll}
 \newcommand{\ownTitle}{Low-rank tensor reconstruction of concentrated densities with application to Bayesian inversion}   
 \newcommand{\ownShortTitle}{Tensor reconstruction for densities}
 \newcommand{\ownAMS}{%
 62F15, 
 62G07, 
 35R60, 
 60H35, 
 65C20, 
 65N12, 
 65N22, 
 65J10
 }
 \newcommand{\ownKeywords}{Tensor train, uncertainty quantification, VMC, low-rank, reduced order model, Bayesian inversion, partial differential equations with random coefficients}
 \newcommand{\ownThanks}{M. Eigel and R. Gruhlke have been supported by the DFG SPP1886 ``Polymorphic uncertainty modelling for the numerical design of structures``.}
\begin{document}
 \title[\ownShortTitle]{\ownTitle}
 \date{\today}
 
 \author[M.\ Eigel]{Martin Eigel}
 \address{Weierstrass Institute\\Mohrenstrasse 39\\D-10117 Berlin\\Germany}
 \email{martin.eigel@wias-berlin.de}
 
 \author[R.\ Gruhlke]{Robert Gruhlke}
 \address{Weierstrass Institute\\Mohrenstrasse 39\\D-10117 Berlin\\Germany}
 \email{robert.gruhlke@wias-berlin.de}
 
 \author[M.\ Marschall]{Manuel Marschall}
 \address{Research conducted at Weierstrass Institute\\Mohrenstrasse 39\\D-10117 Berlin\\Germany. Present address: Physikalisch-Technische Bundesanstalt\\ Braunschweig and Berlin\\ Germany}
 \email{manuel.marschall@ptb.de}
 
 \subjclass[2010]{%
 \ownAMS
 }
 \keywords{\ownKeywords}
 \thanks{\ownThanks}
 \begin{abstract}
  Transport maps have become a popular mechanic to express complicated probability densities using sample propagation through an optimized push-forward. 
  Beside their broad applicability and well-known success, transport maps suffer from several drawbacks such as numerical inaccuracies induced by the optimization process and the fact that sampling schemes have to be employed when quantities of interest, \eg moments are to compute. 
  This paper presents a novel method for the accurate functional approximation of probability density functions (PDF) that copes with those issues.
  By interpreting the pull-back result of a target PDF through an inexact transport map as a perturbed reference density, a subsequent functional representation in a more accessible format allows for efficient and more accurate computation of the desired quantities.
  We introduce a layer-based approximation of the perturbed reference density in an appropriate coordinate system to split the high-dimensional representation problem into a set of independent approximations for which separately chosen orthonormal basis functions are available. This effectively motivates the notion of h- and p-refinement (i.e. ``mesh size'' and polynomial degree) for the approximation of high-dimensional PDFs.
  To circumvent the curse of dimensionality and enable sampling-free access to certain quantities of interest, a low-rank reconstruction in the tensor train format is employed via the Variational Monte Carlo method.
  An a priori convergence analysis of the developed approach is derived in terms of Hellinger distance and the Kullback-Leibler divergence.
  Applications comprising Bayesian inverse problems and several degrees of concentrated densities illuminate the (superior) convergence in comparison to Monte Carlo and Markov-Chain Monte Carlo methods.

\end{abstract}

  \maketitle

  \section{Overview}
  We derive a novel numerical method for the functional representation of complicated (in particular highly concentrated) probability densities.
  This difficult task usually is attacked with Markov Chain Monte Carlo\\ (MCMC) methods which yield samples of the posterior.
  Despite their popularity, the convergence rate of these methods is ultimately limited by the employed Monte Carlo sampling technique, see e.g.~\cite{dodwell2019multilevel} for recent multilevel techniques in this context.
  Moreover, practical issues e.g. regarding the initial number of samples (burn-in) or a specific convergence assessment arise.
  
  In this work, we propose a new approach based on \emph{function space representations with efficient surrogate models} in several instances.
  This is motivated by our previous work on adaptive low-rank approximations of solutions of parametric random PDEs with Adaptive Stochastic Galerkin FEM (ASGFEM, see e.g.~\cite{EPS17,EMPS18}) and in particular the sampling-free Bayesian inversion presented in~\cite{EMS18} where the setting of uniform random variables was examined.
  A generalization to the important case of Gaussian random variables turns out to be non-trivial from a computational point of view due to the difficulties of representing highly concentrated densities in a compressing tensor format which is required in order to cope with the high dimensionality of the problem.
  As a consequence, we develop a discretization approach which takes into account the potentially problematic structure of the probability density at hand by a combination of several transformations and approximations that can be chosen adaptively to counteract the interplay of the employed numerical approximations.
  With the computed functional representation of the density, the evaluation of moments or other statistical quantities of interest can be carried out efficiently and with high accuracy.
  
  The central idea of the method is to obtain a map which transports the target density to some convenient reference density and employ low-rank regression techniques to obtain a functional representation, for which accurate numerical methods are available.
  Transport maps for probability densities are a classical topic in mathematics, cf.\cite{villani2008optimal,santambrogio2015optimal}.
  They are under active research in particular in the area of optimal transport~\cite{villani2008optimal,santambrogio2015optimal} and also have become popular in current machine learning research~\cite{tran2019discrete,rezende2015variational,detommaso2019hint}.
  A main application we have in mind is Bayesian inversion where, given a prior density and some observations of the forward model, a posterior density 
  should be determined.
  In this context, the rescaling approaches in~\cite{schillings2016scaling,schillings2019convergence} based on the Laplace approximation can be considered as transport maps of a certain (affine) form.
  More general transport maps have been examined extensively in~\cite{el2012bayesian,parno2018transport} and other works of the research group.
  Obtaining a transport map is in general realized by minimizing a certain loss functional, \eg the Kullback-Leibler distance, between the target and a pushed-forward reference density. 
  This process has been analyzed and improved using iterative maps~\cite{bigoni2019greedy} or multi-scale approaches~\cite{parno2016multiscale}.
  However, the optimization, the loss functional and the chosen model class for the transport map yield only an approximation to an \emph{exact} transport. 
  We hence suppose that, in general, an inexact transport is available.
  By a pull-back argument, this can be interpreted as starting from a different or slightly perturbed reference density.
  The degree of perturbation has then to be coped with in subsequent approximation steps to enable an explicit representation of this new reference and make quantities of interest (QoI) directly accessible.
  Finding a suitable approximation relies on concepts from adaptive finite element methods (FEM).
  In addition to the selection of (local) approximation spaces of a certain degree(``p-refinement''), we introduce a spatial decomposition of the density representation into layers (``h-refinement'') around some center of mass of the considered density.
  This enables to exploit the decay behavior of the approximated density.
  Overall, this ``hp-refinement'' allows to balance inaccuracies and hence perturbations of the reference density by putting more effort into the discretization part.
  One hence has the freedom to decide whether more effort should be invested into computing an exact transport map or into a more elaborate discretization (with more layers and larger basis) of the perturbed reference density.
  
  For eventual computations with the devised (possibly high-dimensional) functional density representation, an efficient representation format is required.
  In our context, hierarchical tensors and in particular tensor trains (TT) prove to be advantageous, cf.~\cite{bachmayr2016tensor,oseledets2011tensor}.
  These low-rank formats enable to alleviate the curse of dimensionality under suitable conditions and allow for efficient evaluations of high-dimensional objects.
  For each layer of the discretization we aim to obtain a low-rank tensor representation of the respective perturbed reference density.
  In certain ideal cases, such as transporting to the standard Gaussian density, a rank-one representation is sufficient.
  Having a perturbed reference density that is Gaussian but not standard normal, the theory in~\cite{rohrbach2020rank} applies.
  In more general cases, a low-rank representability may be observed numerically.
  To allow for tensor methods to be applicable, the desired discretization layers have to be tensor domains.
  Therefore, the underlying perturbed reference density is transformed to an alternative coordinate system which benefits the representation and allows to exploit the regularity and decay behavior of the density. 
  To generate a tensor train representation (coupled with a function basis which is then also called extended or functional TT format~\cite{gorodetsky2015function}), the Variational Monte Carlo (VMC) method~\cite{ESTW19} is employed.
  It basically is a tensor regression approach based on function samples for which a convergence analysis is available.
  Notably, depending on the chosen loss functional, it leads to the best approximation in the respective model space.
  It has previously been examined in the context of random PDEs in~\cite{ESTW19} as an alternative nonintrusive numerical approach to Stochastic Galerkin FEM in the TT format~\cite{EPS17,EMPS18}.
  The approximation of~\cite{EMPS18} is used in one of the presented examples for Bayesian inversion with the random Darcy equation with lognormal coefficient.
  We note that surrogate models of the forward model have been used in the context of MCMC e.g. in~ \cite{li2014adaptive} and tensor representations (obtained by cross approximation) were used in~\cite{dolgov2018approximation} to improve the efficiency of MCMC sampling.
  
  The derivation of our method is supported by an a priori convergence analysis with respect to the Hellinger distance and the Kullback-Leibler divergence.
  In the analysis, different error sources have to be considered, in particular a layer truncation error depending on decay properties of the density, a low-rank truncation error and model space approximations are introduced.
  Moreover, the VMC error analysis~\cite{ESTW19} comprising statistical estimation and numerical approximation errors is adjusted to be applicable to the devised approach.
  While not usable for an a posterior error control in its current initial form, the derived analysis leads the way to more elaborate results for this promising method in future research.
  
  With the constructed functional density surrogate, sampling-free computations of statistical quantities of interest such as moments or marginals become feasible by fast tensor contractions, even for highly concentrated or (depending on the available transport map) nonlinearly transformed high-dimensional densities.
  
  While several assumptions have to be satisfied for this method to work most efficiently, the approach is rather general and can be further adapted to the considered problem.
  Moreover, it should be emphasized that by constructing a functional representation, structural properties of the density at hand (in particular smoothness, sparsity, low-rank approximability and decay behavior in different parameters) can be exploited in a much more extensive way than what is possible with sampling based methods such as MCMC, leading to more accurate statistical computations and better convergence rates.
  We note that the perturbed posterior surrogate can be used to efficiently generate samples by rejection sampling or within a MCMC scheme.
  Since the perturbed transport can be seen as a preconditioner, the sample generation can be based on the perturbed prior.
  These samples can then be pushed forward to the posterior.
  As a prospective extension, the constructed posterior density could directly be used in a Stochastic Galerkin FEM based on the integral structure, closing the loop of forward and inverse problem, resulting in the inferred forward problem with model data determined by Bayesian inversion from the observed data.
  
  \bigskip
  
  The structure of the paper is as follows.
  Section~\ref{sec:DensityRepresentation} is concerned with the representation of probability densities and introduces a relation between a target and a reference density.
  Such a transport map can be determined numerically by approximation in a chosen class of functions and with an assumed structure, leading to the concept of perturbed reference densities.
  To counteract the perturbation, a layered truncated discretization is introduced.
  An efficient low-rank representation of the mappings is described in Section~\ref{sec:low-rank} where the tensor train format is discussed.
  In order to obtain this nonintrusively, the Variational Monte Carlo (VMC) tensor reconstruction is reviewed.
  A priori convergence results with respect to the Hellinger distance and Kullback-Leibler divergence are derived in Section~\ref{sec:error estimates}.
  For practical purposes, the proposed method is described in terms of an algorithm in Section~\ref{sec:Algorithm}.
  Possible applications we have in mind are examined in Section~\ref{sec:applications}.
  In particular, the setting of Bayesian inverse problems is recalled.
  Moreover, the computation of moments and marginals is scrutinized.
  Section~\ref{sec:numerical experiments} illustrates the performance of the proposed method.
  In addition to an examination of the numerical sensitivity of the accuracy with respect to the perturbation of the transport maps, a typical model problem from Uncertainty Quantification (UQ) is depicted, namely the identification of a parametrization for the random Darcy equation with lognormal coefficient given as solution of a stochastic Galerkin FEM.
  
  \section{Density representation}
  \label{sec:DensityRepresentation}
  
  The aim of this section is to introduce the central ideas of the proposed approximation of densities.
  For this task, two established concepts are reviewed, namely \emph{transport maps}~\cite{el2012bayesian,bigoni2019greedy}, which are closely related to the notion of optimal transport~\cite{villani2008optimal,santambrogio2015optimal}, and \emph{hierarchical low-rank tensor representations}~\cite{oseledets2011tensor,hackbusch2012tensor,bachmayr2016tensor}.
  By the combination of these techniques, assuming the access to a suitable transformation,
  the developed approach yields a functional representation of the density in a format which is suited to computations with high-dimensional functions.
  In particular, we are able to handle highly concentrated posterior densities, \eg appearing in the context of Bayesian inverse problems.
  While transport maps on their own in principle enable the generation of samples of some target distribution, the combination with a functional low-rank representation allows for integral quantities such as (centered) moments to become computable.
  Given an approximate transport map, the low-rank representation can be seen as a further approximation step (improving the inaccuracy of the used transport) to gain direct access to the target density.

  Consider a target measure $\pi$ with Radon-Nikodym derivative with respect to the Lebesgue measure $\lambda$ denoted as $f$ with support in $\mathbb{R}^d$, $d<\infty$, \ie
  \begin{equation}
  \label{eq:main density}
  f(y) := \frac{\mathrm{d}\pi}{\mathrm{d}\lambda}(y),\quad y\in Y:=\mathbb{R}^d.
  \end{equation}
  In the following we assume that point evaluations of $f$ are available up to a multiplicative constant, motivated by the framework of Bayesian posterior density representation with unknown normalization constant.
  Furthermore, let $\pi_0$ be some reference measure exhibiting a Radon-Nikodym derivative with respect to to the Lebesgue measure denoted as $f_0$.
  This is motivated by the prior measure and density in the context of Bayesian inference.

  \subsection{Transport Maps}
  \label{sec:transportMaps}
  
  The notion of density transport is classical and with optimal transport has become a popular field recently, see \eg~\cite{villani2008optimal,santambrogio2015optimal}.
  It has been employed to improve numerical approaches for Bayesian inverse problems for instance in~\cite{el2012bayesian,bigoni2019greedy,dolgov2018approximation}.
  Similar approaches are discussed in terms of sample transport \eg for Stein's method~\cite{liu2016stein,detommaso2018stein} or multi-layer maps~\cite{bigoni2019greedy}.
  We review the properties required for our approach in what follows.
  Note that since our target application is Bayesian inversion, we usually use the terms ``prior'' and ``posterior'' instead of the more general ``reference'' and ``target'' densities.

  Let $X:=\mathbb{R}^d$ and assume that there exists an exact transport map 
  \begin{equation}
  \label{eq:exact_transport}
      T\colon X\to Y,
  \end{equation} which is a diffeomorphism\footnote{note that the requirements on $T$ can be weakened, \eg to local Lipschitz} that relates $\pi$ and $\pi_0$ via pullback, \ie
  \begin{align}
      \label{eq:exact_prior}
      f_0(x) = f(T(x)) |\det\mathcal{J}_T(x)|, \quad x\in X.
  \end{align}
  Then, computations might be carried out in terms of the measure $\pi_0$, which is commonly assumed to be of a simpler structure.
  For instance the moment computation with respect to some multiindex ${\bm{\alpha}}$ reads as follows,
  \begin{align}
      \int\limits_{Y} y^{\bm{\alpha}}\mathrm{d}\pi(y) 
      &= 
      \int\limits_{X} T(x)^{\bm{\alpha}} \mathrm{d}\pi_0(x) \nonumber\\
      &= 
      \int\limits_{X} T(x)^{\bm{\alpha}} f_0(x)\mathrm{d}\lambda(x).
      \label{eq:transport_moment_equation}
  \end{align}
  Note that the computation of the right-hand side in \eqref{eq:transport_moment_equation} may still be a challenging task depending on the actual structure of $T$.
  In~\cite{el2012bayesian} $T$ is expanded in chaos polynomials with respect to $\pi_0$.
  From a practical point of view, this provides access to lower-order moments using orthogonality of the underlying polynomial system.

  Here we follow an alternative strategy with the aim to efficiently compute moments of some target density based on a functional representation.
  Notably we assume a convenient (simple) structures of $T$ with the potential drawback of reduced accuracy, \ie an inexact (pull-back) transport from the target to an auxiliary density (instead of the exact reference).
  Motivated by the Bayesian context, we call such a pull-back of some posterior density the \emph{perturbed prior density}, see Section~\ref{sec:inexact_transport}.
  Given a simple transport structure, the possibly demanding computational task is shifted to the accurate approximation of the perturbed prior.
  For this, there is justified hope of feasibility in some appropriate (alternative) coordinate system.
  In order to tackle moment computations, other posterior statistics or to generate posterior samples, we hence devise a numerical approach that enables a \emph{workload balancing between the reconstruction of some problem-dependent transport structure and the accurate evaluation of the perturbed prior}.
  In the following we list some examples of transport maps.
  
  \subsubsection{Affine transport}
  \label{sec:affine transport}
  In~\cite{schillings2016scaling,schillings2019convergence} the authors employ an affine linear preconditioning for acceleration of MCMC or sparse-grid integration in the context of  highly informative and concentrated Bayesian posterior densities, using a s.p.d. matrix $H\in\mathbb{R}^{d,d}$ and $M\in\mathbb{R}^d$. 
  In the mentioned articles, up to a multiplicative constant, $H$ corresponds to the inverse square root of the Hessian at the MAP (maximum a posteriori probability) $M$, \ie the location of the local optimum of an Laplace approximation of the posterior density.
  This rather simple construction, under the assumption of an unimodal density, leads to stable numerical algorithms for the computation of quantities of interest as the posterior mass concentrates.
  When considering the push-forward of a reference density $f_0$ to a target density $f$ this concept coincides with an affine transport 
  \begin{equation}
      y = T(x) = Hx + M, \quad x \in X.
  \end{equation}
  In the transport setting $H$ and $M$ may be computed for instance via some minimization of the Kullback-Leibler divergence as in~\cite{el2012bayesian}.
  Note that $H$ and $M$ do not necessarily have to be the inverse square root of the Hessian or the MAP.
  Figure~\ref{fig:affine density transport} illustrates the concept of an affine transport. 
  
  \begin{figure}
    \begin{center}
    \begin{tikzpicture}
    \node at (0,0) {$\phantom{1}$};
    \foreach[evaluate=\c using int(\l-5)] \l in  {5,20,35,50,65,80,95}{
        \def\opac{1-0.005*\l}
        \draw[lightgray!\c!black, thick, opacity = \opac, rotate around={-30:(2,0)}] (2,0) ellipse ({0.005*\l} and {0.02*\l});
    }
    
    \foreach[evaluate=\c using int(\l-5)] \l in {5,20,35,50,65,80,95}{ 
        \def\opac{1-0.005*\l}
        \draw[lightgray!\c!black, thick, opacity = \opac] (9,0) circle(0.02*\l);
    }
    
    \draw [thick,<-] (3,-1) to [out=-25,in=205] (7,-1);
    \node at (5,-1) {$T(x)$};
    
    \node at (6.5,0) {$f_0$};
    \node at (3.5,0) {$f$};
    
    \end{tikzpicture}
    \caption{Illustration of affine transport: translation, rotation and rescaling.}
     \label{fig:affine density transport}
    \end{center}
    \end{figure}
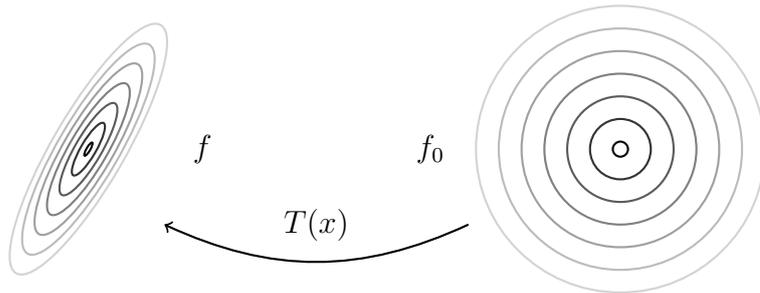

  \subsubsection{Quadratic transport}
  \label{sec:quadratic transport}
  A more general class of polynomial transport exhibits the form 
  \begin{equation}
      T(x) = \frac{1}{2}x: \bm{A} : x + Hx + M, \quad x\in X,
  \end{equation}
  with $\bm{A}\in\mathbb{R}^{d,d,d}, H\in\mathbb{R}^{d,d}, M\in\mathbb{R}^d$.
  Such a quadratic transport may be used for simple nonlinear transformations as depicted in Figure~\ref{figure:quadratic_transport}.
  \begin{center}
    \begin{figure}
      \begin{center}
      \includegraphics[width=.8\linewidth]{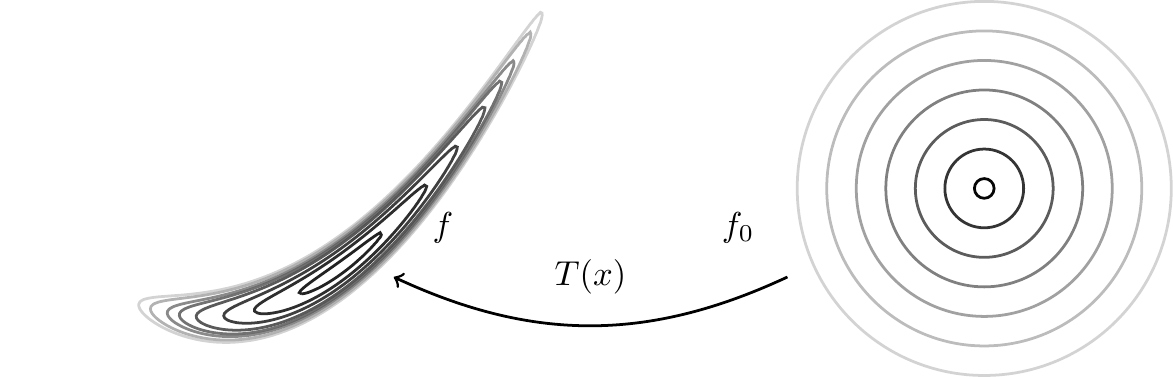}
      
          
          
          

      \caption{Illustration of quadratic transport: affine properties and bending.}
      \label{figure:quadratic_transport}
      \end{center}
      \end{figure}
  \end{center}
  \subsubsection{More general transport maps}
  The parametrization of transport maps can be chosen quite liberally as long as certain criteria are satisfied, which are either directly imposed in the ansatz space $\mathcal T$ of the maps or added as constraints during optimization.
  In particular, the approximate transport map has to be invertible, which can be ensured by requiring a positive Jacobian.
  A commonly used measure for transport optimization is the Kullback-Leibler divergence\footnote{although in machine learning Wasserstein or Sinkhorn distances have become very popular when so-called normalizing flows are computed} leading to the optimization problem
  \begin{equation}
  \min_{T\in\mathcal T} \mathrm{d}_{\mathrm{KL}}(Y;T\pi_0,\pi)\quad\text{s.t.}\quad \mathrm{det} \nabla T > 0 \qquad \text{$\pi$-a.e.}
  \end{equation}
  Several suggestions regarding simplifications and special choices of function spaces $\mathcal T$ such as smooth triangular maps based on higher-order polynomials or radial basis functions can for instance be found in the review article~\cite{el2012bayesian}.
  An interesting idea is to subdivide the task into the iterative computation of simple correction maps which are then composed as proposed in~\cite{bigoni2019greedy}.
  We again emphasize that while an accurate transport map is desirable, any approximation of such a map can in principle be used with the proposed method.
  In fact one can decide whether it is beneficial to spend more effort on the approximation of the perturbed density or on a better representation of the transport.

  \subsection{Inexact transport and the perturbed prior}\label{sec:inexact_transport}
  In general, the transport map $T$ is unknown or difficult to determine and hence has to be approximated by some $\tilde{T}\colon X\to Y$, \eg using a polynomial chaos representation with respect to $\pi_0$ \cite{el2012bayesian} or with a more advanced composition of simple maps in a reduced space such as in~\cite{bigoni2019greedy}. 
  As a consequence, it holds 
  \begin{equation}
      \int\limits_{Y} y^{\bm{\alpha}} \mathrm{d}\pi(y) 
      \approx
      \int\limits_{X} \tilde{T}(x)^{\bm{\alpha}}\mathrm{d}\pi_0(x)
  \end{equation}
  subject to the accuracy of the involved approximation of $T$.
  One can also view $\tilde{T}$ as the push-forward of some measure $\tilde{\pi}_0$ with density $\tilde{f}_0$ to $\pi$ given by
  \begin{equation}
     \label{eq:pert_prior_density}
      \tilde{f}_0(x) = f(\tilde{T}(x))|\operatorname{det}\mathcal{J}_{\tilde{T}}(x)|.
  \end{equation} 
  We henceforth refer to~\eqref{eq:pert_prior_density} as the auxiliary reference or \emph{perturbed prior density}.
  Using this construction, the moment computation reads
  \begin{equation}
  \label{eq:pert_pi_0}
      \int\limits_{Y} y^{\bm{\alpha}}\mathrm{d}\pi(y)= 
      \int\limits_{X} \tilde{T}(x)^{\bm{\alpha}}\mathrm{d}\tilde{\pi}_0
      = \int\limits_{X} \tilde{T}(x)^{\bm{\alpha}}\tilde{f}_0(x)\mathrm{d}\lambda(x)
      .
  \end{equation}
  If one would know $\tilde{f}_0$, by~\eqref{eq:pert_prior_density} and~\eqref{eq:pert_pi_0} one would also have access to the exact posterior.
  
  Equation~\eqref{eq:pert_pi_0} is the starting point of the proposed method by approximating $\tilde{f}_0$ in another coordinate system which is better adapted to the structure of the approximate (perturbed) prior.
  Consider a (fixed) diffeomorphism 
  \begin{equation}
     \label{eq:referencetrafo_map}
      \Phi\colon \hat{X}\subset\mathbb{R}^d\to X,\quad \hat{x}\mapsto x=\Phi(\hat{x}) 
  \end{equation}
  with Jacobian $\hat{x}\mapsto|\operatorname{det}\mathcal{J}_\Phi(\hat{x})|$ and define the \textit{perturbed transformed prior}
  \begin{equation}
     \label{eq:tildef0_trafo}
     \hat{f}_0\colon \hat{X}\mapsto\mathbb{R}_+,\quad \hat{x}\mapsto \hat{f}_0(\hat{x}):=\tilde{f}_0(\Phi(\hat{x})).
  \end{equation}
  In case \eqref{eq:tildef0_trafo} can be approximated accurately by some function $\tilde{f}_0^h$ then 
  \begin{equation}
      \label{eq:tildef0TTmoment}
      \int\limits_{Y} y^{\bm{\alpha}}\mathrm{d}\pi(y)
      \approx
      \int\limits_{\hat{X}} \tilde{T}(\Phi(\hat{x}))^{\bm{\alpha}} \tilde{f}_0^h(\hat{x})|\operatorname{det}\mathcal{J}_\Phi(\hat{x})| \mathrm{d}\lambda(\hat{x}) 
  \end{equation}
  with accuracy determined only by the approximation quality of $\tilde{f}_0^h$.
  Thus, \eqref{eq:tildef0_trafo} and~\eqref{eq:tildef0TTmoment} enable a balancing between the construction of the transport map approximation $\tilde{T}$ of $T$ to shift its complexity given the underlying diffeomorphism $\Phi$ to the approximation of~\eqref{eq:tildef0_trafo} in a new coordinate system intrinsic to $\hat{X}$.

  The construction of $\tilde{T}$ and a suitable map in~\eqref{eq:referencetrafo_map} may be used to obtain a convenient transformed auxiliary reference density given in~\eqref{eq:tildef0_trafo}.
  An approximation thereof can be significantly simpler compared to a possibly complicated and concentrated target density $f$ or the computation of the exact transport $T$.
  This \eg is satisfied if
  \begin{itemize}
      \item $f_0$ is a Gaussian density and $\tilde{T}$ maps $f$ to $\tilde{f}_0$ which is in some sense near to a Gaussian density.
      In this case, $\Phi$ from~\eqref{eq:referencetrafo_map} may be chosen as the $d$-dimensional spherical transformation and extended low-rank tensor formats are employed for the approximation, see Section~\ref{sec:low-rank}.
      In this setting, the introduction of an adapted coordinate system allows to shift the exponential decay to the one dimensional radial parameter.
      The accuracy of an approximation can then be improved easily by additional $h$-refinements as described in Section~\ref{sec:layer}.
      \item The reference density $f_0$ has a complicated form and might be replaced by $\tilde{f}_0$ to become computationally accessible.
  \end{itemize} 
  
  In the following we state an important property that needs to be fulfilled by the perturbed prior $\tilde{f}_0$ in order to lead to a convergent method with the employed approximations.
  
  \begin{definition}(\textbf{outer polynomial exponential decay})
  A function $\tilde{f}_0\colon X\to\mathbb{R}^+$ has outer polynomial exponential decay if there exists a simply connected compact $K\subset X$ with a polynomial $\pi^+$ being positive on $X\setminus K$ and some $C>0$ such that 
  \begin{equation}
  \tilde{f}_0(x) \leq C \exp{(-\pi^+(x))},\quad x\in X\setminus{K}.
  \end{equation}
  \end{definition}
  
  \subsection{Layer based representation}
  \label{sec:layer}
  To further refine and motivate the notion of an adapted coordinate system, let $L\in\mathbb{N}$ and $(X^\ell)_{\ell=1}^{L}$ be pairwise disjoint domains in $X$ s.t. 
  \begin{equation}
      K:=\bigcup_{\ell=1}^L \overline{X^\ell}
  \end{equation}
  is simply connected and compact and define $X^{L+1}:= X\setminus{K}$.
  Then, for given $L\in\mathbb{N}$ we may decompose the perturbed prior $\tilde{f}_0$ as
  \begin{equation}
  \label{eq:density_decomposition}
      \tilde{f}_0(x) = \sum\limits_{\ell=1}^{L+1} \tilde{f_0^\ell}(x) \quad\text{with}\quad \tilde{f_0^\ell} := \chi_{\ell}\tilde{f}_0,
  \end{equation}
  where $\chi_{\ell}$ denotes the indicator function on $X^\ell$. 
  Moreover, for any tensor set $\hat{X}^\ell := \bigtimes_{i=1}^d\hat{X}_i^\ell$ and diffeomorphism $\Phi^\ell\colon \hat{X}^\ell\mapsto X^\ell$, $1\leq \ell\leq L+1$, we may represent the \emph{localized perturbed prior} $\tilde{f_0}^\ell$ as a pull-back function 
  \begin{equation}
      \label{eq:local_pert_prior}
      \tilde{f_0^\ell} = \hat{f_0^\ell}\circ {\Phi^{\ell}}^{-1},
  \end{equation}  
  where $\hat{f_0^\ell}$ is a map defined on $\hat{X}^\ell$ as in~\eqref{eq:tildef0_trafo}.
  We consider the following example.
  \begin{example}
    \label{ex:polar_coords}(\textbf{multivariate polar transformation})\\
    The d-dimensional spherical coordinate system allows for simple layer layouts in terms of hyperspherical shells.
    In particular, for $\ell=1,\ldots,L+1<\infty$, with $0=\rho_1<\rho_2<\ldots< \rho_{L+1} < \rho_{L+2} = \infty$, let
    \begin{align*}
        \hat{X}^\ell&:=[\rho_\ell,\rho_{\ell+1}]\times[0,2\pi]\times\bigtimes_{i=2}^{d-2}[0,\pi],\\ 
        X^\ell&:= B_{\rho_{\ell+1}}(0)\setminus B_{\rho_{\ell}}(0)\subset X,
    \end{align*} 
    \ie $\hat{X}^\ell$ and $X^\ell$ denote the corresponding adopted (transformed) and the original parameter space, respectively.
    Then, for $\hat{x} = (\rho, \theta_0, \bm{\theta})\in\hat{X}$, $\bm{\theta}=(\theta_1, \ldots, \theta_{d-2})$, the polar transformation $\Phi^\ell\colon \hat{X}^\ell \to X^\ell$  reads
     \begin{equation}
    \Phi^\ell (\hat{x}) 
          = \rho
    \left[
    \begin{array}{r}
         \cos\theta_0\sin\theta_1\sin\theta_2\cdots \sin\theta_{d-3}\sin\theta_{d-2} \\
         \sin\theta_0\sin\theta_1\sin\theta_2\cdots \sin\theta_{d-3}\sin\theta_{d-2} \\
         \cos\theta_1\sin\theta_2\cdots \sin\theta_{d-3}\sin\theta_{d-2}\\
         \cos\theta_2 \cdots \sin\theta_{d-3}\sin\theta_{d-2}\\
         \vdots\\
         \cos\theta_{d-3}\sin\theta_{d-2}\\
         \cos\theta_{d-2}
    \end{array}
    \right].
    \end{equation}
    Moreover, the Jacobian is given by
    \begin{align}
      \det \mathcal{J}_{\Phi^\ell}(\rho, \theta_0, \bm{\theta})=\rho^{d-1} \prod\limits_{i=1}^{d-2}\sin^{i}\theta_i.
    \end{align}
  \end{example}
  This layer based coordinate change enables a representation of the density on bounded domains.
  Even though the remainder layer is unbounded, we assume that $K$ is sufficiently large to cover all probability mass of $\tilde{f}_0$ except for a negligible higher-order error.

  Up to this point, the choice of transformation $\Phi^\ell$, $\ell=1, \ldots, L+1$, is fairly general. 
  However, for the further development of the method we assume the following property.
  
  \begin{definition}(\textbf{rank 1 stability})\\
  Let $\mcX, \hat{\mcX} = \bigtimes_{i=1}^d \hat{\mcX}_i\subset\mathbb{R}^d$ be open and bounded sets. 
  A diffeomorphism $\Phi\colon\hat{\mcX}\mapsto \mcX$ is called \emph{rank 1 stable} if $\Phi$ and the absolute value of its Jacobian $\det \mathcal{J}_\Phi$ have rank 1, \ie there exists univariate functions $\Phi_i\colon \hat{\mcX}_i \to \mcX$ , $h_i\colon \hat{\mcX}\to\bbR$, $i=1,\ldots, d$, such that for $\hat{x}\in\hat{\mcX}$
  \begin{equation}
    \label{eq:def:rank1}
    \Phi(\hat{x}) = \prod_{i=1}^d \Phi_i(\hat{x}_i), \quad \abs{\det\mathcal{J}_{\Phi}(\hat{x})} = \prod_{i=1}^d h_i(\hat{x}_i).
  \end{equation}
  \end{definition}
  
  \begin{proposition}
  The multivariate polar coordinate transformation from Example~\ref{ex:polar_coords} is rank 1 stable.
  \end{proposition}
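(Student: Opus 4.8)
The plan is to exhibit explicitly the univariate factors demanded by the definition of rank 1 stability, separately for the map $\Phi^\ell$ and for its Jacobian determinant, using the closed-form expressions recorded in Example~\ref{ex:polar_coords}. Throughout I identify the coordinate directions of $\hat{X}^\ell$ with $(\rho,\theta_0,\theta_1,\dots,\theta_{d-2})$ and, since rank 1 stability is formulated for open bounded sets on which $\Phi$ is a diffeomorphism, I first pass to the interior of $\hat{X}^\ell$ with the angular seam $\theta_0\in\{0,2\pi\}$ removed (and, for the unbounded remainder shell $\ell=L+1$, to a bounded truncation); on this set the stated formulas remain valid and $\Phi^\ell$ is a genuine diffeomorphism onto its image. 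The product $\prod_{i=1}^d\Phi_i(\hat{x}_i)$ in~\eqref{eq:def:rank1} is understood componentwise, i.e. as a Hadamard product of the $\mathbb{R}^d$-valued factors $\Phi_i$.

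For the map itself, I would read off from the displayed matrix that $\rho$ enters linearly in every component, that $\theta_0$ enters through $\cos\theta_0$ in component $1$, through $\sin\theta_0$ in component $2$ and through the factor $1$ otherwise, and that for $k=1,\dots,d-2$ the angle $\theta_k$ enters through $\sin\theta_k$ in components $1,\dots,k+1$, through $\cos\theta_k$ in component $k+2$, and through the factor $1$ in components $k+3,\dots,d$. Accordingly I set $\Phi_\rho(\rho):=\rho\,(1,\dots,1)^\top$, $\Phi_{\theta_0}(\theta_0):=(\cos\theta_0,\sin\theta_0,1,\dots,1)^\top$, and, for $k\ge 1$, $\Phi_{\theta_k}(\theta_k):=(\underbrace{\sin\theta_k,\dots,\sin\theta_k}_{k+1},\cos\theta_k,\underbrace{1,\dots,1}_{d-k-2})^\top$. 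A direct componentwise check (for instance, component $j=3$ yields $\rho\cdot 1\cdot\cos\theta_1\cdot\sin\theta_2\cdots\sin\theta_{d-2}$, matching the third row) then shows that the Hadamard product $\prod_i\Phi_i(\hat{x}_i)$ reproduces $\Phi^\ell(\hat{x})$, which is precisely the rank 1 form for $\Phi^\ell$.

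For the Jacobian I would simply invoke the closed-form expression $\det\mathcal{J}_{\Phi^\ell}(\rho,\theta_0,\bm{\theta})=\rho^{d-1}\prod_{i=1}^{d-2}\sin^{i}\theta_i$ from Example~\ref{ex:polar_coords}. On the relevant domain one has $\rho\ge 0$ and $\theta_i\in[0,\pi]$, hence $\sin\theta_i\ge 0$, so $\abs{\det\mathcal{J}_{\Phi^\ell}}=\rho^{d-1}\prod_{i=1}^{d-2}\sin^{i}\theta_i$, which factorizes with $h_\rho(\rho):=\rho^{d-1}$, $h_{\theta_0}(\theta_0):=1$ and $h_{\theta_i}(\theta_i):=\sin^{i}\theta_i$ for $i=1,\dots,d-2$ — exactly the rank 1 form for the Jacobian, completing the verification.

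I do not expect a genuine obstacle: the statement reduces to bookkeeping the sine/cosine pattern of the polar parametrization and to reading off the already-given product formula for the determinant. The only point requiring care is the domain, since the polar map degenerates on the boundary of the angular box and on the seam $\theta_0\in\{0,2\pi\}$, and the last shell is unbounded; it therefore suffices, and is what is needed in the sequel, to state rank 1 stability for the restriction of each $\Phi^\ell$ to an open bounded set on which it is a diffeomorphism.
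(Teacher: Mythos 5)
Your proposal is correct and matches the paper's (implicit) argument: the paper states this proposition without proof, treating it as immediate from the explicit formulas in Example~\ref{ex:polar_coords}, and your verification simply reads off the componentwise (Hadamard) factors $\Phi_\rho,\Phi_{\theta_0},\Phi_{\theta_k}$ and the separable form of $|\det\mathcal{J}_{\Phi^\ell}|=\rho^{d-1}\prod_i\sin^i\theta_i$ exactly as intended. Your additional remarks on restricting to the open interior and on the nonnegativity of the Jacobian factors are sensible bookkeeping but do not constitute a different approach.
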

  Due to the notion of rank 1 stable transformations, the map $\hat{x}\mapsto T(\Phi(\hat{x}))$ in~\eqref{eq:tildef0TTmoment} inherits the rank structure of $T$, see Section~\ref{sec:low-rank}.
  Furthermore, since the Jacobian $\hat{x}\mapsto |\operatorname{det}\mathcal{J}_\Phi(\hat{x})|$ is rank 1, we can construct tensorized orthonormal basis functions which may be used to approximate the perturbed transformed prior in~\eqref{eq:tildef0_trafo}.
  
  \begin{remark}
  The described concept can be extended to any rank $r\in\mathbb{N}$ Jacobians of $\Phi$, \ie 
  \begin{equation}
      |\operatorname{det}\mathcal{J}_\Phi(\hat{x})| = \sum\limits_{k=1}^r\prod\limits_{i=1}^d h_{i,k}(\hat{x}_i).
  \end{equation}
  Motivated by the right-hand side in \eqref{eq:tildef0TTmoment}, one may use different approximations of the perturbed transformed prior $\tilde{f}_0\circ\Phi$ in $r$ distinct tensorized spaces, each associated to the rank $1$ weight $\prod\limits_{i=1}^d h_{i,k}$.
  \end{remark}

  \subsection{Layer truncation}
  This paragraph is devoted to the treatment of the last (remainder or truncation) layer introduced in~\eqref{eq:density_decomposition} with the aim to suggest some approximation choices.
  
  If $\tilde{f_0}$ is represented in the layer format~\eqref{eq:density_decomposition}, it is convenient to simply extend the function to zero after layer $L\in\mathbb{N}$.
  By this, the remaining (possibly small) probability mass is neglected.
  Such a procedure is typically employed in numerical applications and does not impose any computational issues since events on the outer truncated domain are usually exponentially unlikely for truncation value chosen sufficiently large.
  Nevertheless, in order to present a rigorous treatment, we require properties like absolute continuity, which would be lost by using a cut-off function.
  Inspired by~\cite{schillings2019convergence} regarding the information limit of unimodal posterior densities\footnote{A result of~\cite{schillings2019convergence} is that under suitable conditions the posterior distribution converges to a Gaussian in the limit of zero noise and infinite measurements.}, we suggest a Gaussian approximation for the last layer $L+1$ on the unbounded domain $X^{L+1}$, \ie for some s.p.d. $\Sigma\in\mathbb{R}^{d, d}$ and $\mu\in\mathbb{R}^d$ we define the \emph{hybrid representation of the perturbed prior} by
  \begin{equation}
      \label{eq:layer representation + split}
      \tilde{f}_0^{\on{Trun}}(x) :=
      C_L
      \left\{
      \begin{array}{ll}
      \tilde{f}_0^\ell(x), & x\in X^\ell, \ell = 1,\ldots,L, \\
      f_{\Sigma,\mu}(x), & x \in X^{L+1},
      \end{array}
      \right.
  \end{equation}
  with $C_L = (C_L^{<}+ C_L^{>})^{-1}$, where
  \begin{align}
      \label{eq:norm constant<}
      C_{L}^{<} &:= \int\limits_{X \setminus{K}} f_{\Sigma,\mu}(x)\,\mathrm{d}\lambda(x), 
      \\
      \label{eq:norm constant>}
      C_L^{>} &:= \sum\limits_{\ell=1}^L \int\limits_{X^\ell} \tilde{f}_0^\ell(x)\,\mathrm{d}\lambda(x),
  \end{align}
  and $f_{\Sigma, \mu}$ denotes the Gaussian probability density function with mean $\mu$ and covariance matrix $\Sigma$.

  \begin{remark} 
      \label{rem:affine trafo choice}
  A good choice for $\mu$ and $\Sigma$ would be the mean and covariance of the exact perturbed prior $\tilde{f}_0$, which however is not accessible a priori. 
  Thus, in numerical simulations one may choose $\mu$ and $\Sigma$ as (centralized) moments of the normalized truncated perturbed prior density $\tilde{f_0^{\on{Trun}}}|_K$ or as the MAP point and the corresponding square root of the numerically computed Hessian as a result of an optimization algorithm on $\tilde{f}_0$.
  \end{remark}
  
  \begin{lemma}(\textbf{truncation error}) 
  \label{proposition:truncation}
  For $\mu\in\bbR^d$ and $\Sigma\in\bbR^{d, d}$ let $\tilde{f}_0$ have outer polynomial exponential decay with positive polynomial $\tilde{\pi}^+$ and $\tilde{C}>0$ with $K=\overline{B_R(\mu)}$ for some $R>0$. 
  Then, for $C_\Sigma = 1/2\lambda_{\mathrm{min}}(\Sigma^{-1})$ there exists $C=C(\tilde{C},\Sigma,d, C_\Sigma)>0$ such that
  \begin{align*}
       \norm{\tilde{f}_0 - \tilde{f}_0^{\on{Trun}}}_{L^1(X\setminus K)} \lesssim
       \norm{\exp{(-\tilde{\pi}^+)}}_{L^1(X\setminus K)} +\\
       \Gamma\left(d/2, C_\Sigma R^2\right)
  \end{align*}
  and 
  \begin{align*}
  &\left|\, \int\limits_{X\setminus{K}} \log\left( \frac{\tilde{f}_0}{f_{\Sigma,\mu}} \right) \tilde{f}_0\,\mathrm{d}x\right|  \leq \\ & \qquad
  \int\limits_{X\setminus K} \left(\frac{1}{2}\|x\|_{\Sigma^{-1}}^2 + \tilde{\pi}^+(x)\right)e^{-\tilde{\pi}^+(x)}\,\mathrm{d}\lambda(x)
  \end{align*}
  with the incomplete Gamma function $\Gamma$.
  \end{lemma}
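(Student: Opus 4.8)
The plan is to split both estimates into the contribution coming from the perturbed prior $\tilde f_0$ itself on $X\setminus K$ and the contribution coming from the Gaussian tail $f_{\Sigma,\mu}$, exploiting the outer polynomial exponential decay of $\tilde f_0$ for the first and an explicit Gaussian tail computation (leading to the incomplete Gamma function) for the second. First I would observe that on $X\setminus K$ the hybrid density is $\tilde f_0^{\on{Trun}} = C_L f_{\Sigma,\mu}$, so by the triangle inequality
\begin{equation*}
\norm{\tilde f_0 - \tilde f_0^{\on{Trun}}}_{L^1(X\setminus K)} \le \norm{\tilde f_0}_{L^1(X\setminus K)} + C_L\norm{f_{\Sigma,\mu}}_{L^1(X\setminus K)}.
\end{equation*}
For the first term I invoke the outer polynomial exponential decay: since $K$ is a ball $\overline{B_R(\mu)}$ and hence of the admissible form with $\tilde\pi^+$ positive on $X\setminus K$, we get $\tilde f_0(x)\le \tilde C\exp(-\tilde\pi^+(x))$ pointwise there, so $\norm{\tilde f_0}_{L^1(X\setminus K)}\le \tilde C\,\norm{\exp(-\tilde\pi^+)}_{L^1(X\setminus K)}$, which is the first summand on the right-hand side (up to the constant absorbed into $\lesssim$). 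I also note $C_L\le 1$ since $C_L^{<}+C_L^{>}\ge C_L^{<}+ \int_{K}\tilde f_0\,d\lambda$, and in any case $C_L$ can be absorbed into the implicit constant.

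Next I would handle the Gaussian tail $\norm{f_{\Sigma,\mu}}_{L^1(X\setminus K)} = \int_{\norm{x-\mu}> R} f_{\Sigma,\mu}(x)\,d\lambda(x)$. Substituting $z = x-\mu$ and bounding the quadratic form in the exponent from below via $x^{\mathsf T}\Sigma^{-1}x \ge \lambda_{\min}(\Sigma^{-1})\norm{z}^2 = 2 C_\Sigma \norm{z}^2$, one gets
\begin{equation*}
\int_{\norm{z}>R} f_{\Sigma,\mu}(\mu+z)\,d\lambda(z) \le \frac{1}{(2\pi)^{d/2}\sqrt{\det\Sigma}}\int_{\norm{z}>R} e^{-C_\Sigma\norm{z}^2}\,d\lambda(z).
\end{equation*}
Passing to polar coordinates, $\int_{\norm{z}>R} e^{-C_\Sigma\norm{z}^2}\,d\lambda(z) = \omega_{d-1}\int_R^\infty r^{d-1}e^{-C_\Sigma r^2}\,dr$, and the substitution $t = C_\Sigma r^2$ turns this into a constant (depending on $d$, $C_\Sigma$) times $\Gamma(d/2,\,C_\Sigma R^2)$, the upper incomplete Gamma function. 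Collecting the two pieces and setting $C = C(\tilde C,\Sigma,d,C_\Sigma)$ to absorb $\tilde C$, $C_L$, $(2\pi)^{-d/2}(\det\Sigma)^{-1/2}$, $\omega_{d-1}$ and the Jacobian constant from the $t$-substitution yields the first claimed bound.

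For the Kullback–Leibler–type second estimate I would start from the elementary inequality $\abs{\log a}\le \abs{a}$ applied pointwise inside the integrand is too crude; instead I bound the logarithm directly: $\log(\tilde f_0/f_{\Sigma,\mu}) = \log\tilde f_0 - \log f_{\Sigma,\mu}$, and on $X\setminus K$ we have $-\log f_{\Sigma,\mu}(x) = \tfrac12\norm{x-\mu}_{\Sigma^{-1}}^2 + \tfrac{d}{2}\log(2\pi) + \tfrac12\log\det\Sigma \le \tfrac12\norm{x}_{\Sigma^{-1}}^2$ up to additive constants (with the convention in the statement that the norm is centred appropriately), while $\log\tilde f_0(x)\le \log\tilde C - \tilde\pi^+(x)$ so that, assuming $\tilde f_0 \le 1$ on the tail (which holds for $R$ large, or else is absorbed), $\abs{\log\tilde f_0(x)}\lesssim \tilde\pi^+(x)$. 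Hence the integrand is bounded by $\bigl(\tfrac12\norm{x}_{\Sigma^{-1}}^2 + \tilde\pi^+(x)\bigr)\tilde f_0(x)$, and using once more $\tilde f_0(x)\le \tilde C e^{-\tilde\pi^+(x)}$ (with $\tilde C$ absorbed) gives exactly the stated integral bound after taking absolute values inside. The main obstacle I anticipate is bookkeeping the various additive constants coming from the normalisation of the Gaussian and ensuring the sign/positivity conventions make $\abs{\log\tilde f_0}$ controllable by $\tilde\pi^+$ on the whole of $X\setminus K$ (this may require mildly enlarging $K$, which is harmless); the polar-coordinate reduction to the incomplete Gamma function is routine once the lower bound on the quadratic form is in place.
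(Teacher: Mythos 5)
Your proposal is correct and matches the intended argument --- the paper itself offers no details, stating only that the lemma ``follows immediately from the definition of $\tilde f_0^{\on{Trun}}$,'' and your triangle-inequality splitting on $X\setminus K$ together with the polar-coordinate reduction of the Gaussian tail to $\Gamma(d/2, C_\Sigma R^2)$ is exactly what that one-liner hides. The only step to tighten is the pointwise claim $|\log\tilde f_0|\lesssim\tilde\pi^+(x)$, which an upper bound on $\tilde f_0$ alone cannot deliver; instead bound the product directly via the monotonicity of $s\mapsto -s\log s$ on $(0,e^{-1})$, which gives $\tilde f_0\,|\log\tilde f_0|\le \tilde C e^{-\tilde\pi^+}\bigl(\tilde\pi^+-\log\tilde C\bigr)$ on the tail (for $R$ large enough that $\tilde C e^{-\tilde\pi^+}\le e^{-1}$) and hence the stated estimate with the constants absorbed as you describe.
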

  \begin{proof}
    The proof follows immediately from the definition of $\tilde{f}_0^{\on{Trun}}$.
  \end{proof}
  In the case that the perturbed prior is close to a Gaussian standard normal distribution, it holds $c\approx 1$. 
  
  Note that the constant $C_{L}^{<}$ in~\eqref{eq:norm constant<} may exhibit an analytic form whereas computing~$C_{L}^{>}$ suffers from the curse of dimensionality and is in general not available.
  To circumvent this issue and render further use of the representation~\eqref{eq:layer representation + split} feasible, we introduce a suitable low-rank approximation in the next section.
  
  \section{Low-rank tensor train format}
\label{sec:low-rank}
The computation of high-dimensional integrals and the efficient construction of surrogates is a challenging task with a multitude of approaches.
Some of these techniques are sparse grid methods~\cite{chen2016sparse,garcke2012sparse}, collocation~\cite{ernst2019expansions,nobile2008sparse,foo2010multi} or modern sampling techniques~\cite{gilks1995markov,rudolf2017metropolis,neal2001annealed}.
As motivated by $C_{L}^{>}$ in~\eqref{eq:norm constant>}, we aim for a model to adequately approximate the localized perturbed prior maps $\tilde{f}_0^\ell$.
The introduction of an adapted coordinate system enables the use of low-rank representations such as the tensor train (TT) format~\cite{oseledets2011tensor,holtz2012alternating,hackbusch2012tensor} described in this section.
We highlight a ``non-intrusive'' sample-based technique to obtain such a representation of arbitrary maps, namely the Variational Monte Carlo (VMC) method~\cite{ESTW19}.

Let $\hat{X} = \bigotimes_{i=1}^d \hat{X_i}$ be a tensor space of separable Banach spaces $\hat X_i$, $i\in[d] \isdef \{1, \ldots, d\}$, and consider a map $g\colon \hat{X} \to \mathbb{R}$.
The function $g$ can be represented in the TT format if there exists a \emph{rank vector} $\bm{r}=(r_1, \ldots, r_{d-1})\in\bbN^{d-1}$ and univariate functions $g^i[k_{i-1}, k_i]\colon \hat{X}_i\to\mathbb{R}$ for $k_i\in[r_i]$, $i\in[d]$, such that for all $\hat x\in\hat{X}$
\begin{equation}
    \label{eq:tt representation}
    g(\hat x) = \sum_{\bm{k}=\bm{1}}^{\bm{r}} \prod_{i=1}^d g^i[k_{i-1}, k_i](\hat x_i), \quad \bm{k}\isdef (k_1, \ldots, k_{d-1}).
\end{equation}
For ease of notation it is convenient to set $k_0 = k_d = 1$.
In the forthcoming sections we consider weighted tensorized Lebesgue spaces.
In particular, for a non-negative weight function $w\colon\hat{X}\to\bbR$ with $w = \bigotimes_{i=1}^d w_i$, $w\in L^1(\hat{X})$, define the tensorization of $L^2(\hat{X}, w) = \bigotimes_{i=1}^d L^2(\hat{X}_i, w_i)$ by
\begin{align}
 \label{eq:tensorl2}
 &\mathcal{V}(\hat{X}) := L^2(\hat{X},w)  \nonumber \\
 &=\left\{ v\colon \hat{X}\to\bbR\;\vert\; \|v\|_{\mathcal{V}}^2:=\int_{\hat{X}} v(\hat{x})^2 w(\hat{x})\,\mathrm{d}\lambda(\hat{x}) < \infty \right\}.
\end{align}
We assume that there exists an complete orthonormal basis $\{P_k^i:k\in\mathbb{N}\}$ in $L^2(\hat{X}_i, w_i)$ for every $i\in [d]$ which is known a priori.
For discretization purposes, we introduce the finite dimensional subspaces

\begin{equation}
  \label{eq:basis_span}
  \mcV_{i, n_i}:= \overline{\Span\left\{P_1^i, \ldots, P_{n_i}^i\right\}}\subseteq L^2(\hat{X}_i, w_i)
\end{equation}
for $i=1,\ldots,d,$ and $n_i\in \mathbb{N}$.
On these we formulate the \emph{extended tensor train format} in terms of the coefficient tensors 
\begin{align}
    G^i \colon \left[r_{i-1}\right]\times\left[n_{i}\right]\times\left[r_{i}\right]&\to \mathbb{R}, \nonumber
\\
  \; (k_{i-1}, j, k_i)&\mapsto G^i[k_{i-1}, j, k_i], \quad i\in\left[d\right],
\end{align}
such that every univariate function $g^i\in\mcV_{i, n_i}$ can be written as
\begin{equation}
  \label{eq:finite tt component}
    g^i[k_{i-1}, k_i](\hat x_i) = \sum_{j=1}^{n_i} G^i[k_{i-1}, j, k_i]P_j^i(\hat x_i) \quad \text{for}\; \hat x\in \hat X_i.
\end{equation}
For the full tensor format the function 
\begin{equation}
\label{eq:definition_V_Lambda}
g\in\mcV_{\Lambda}\isdef\bigotimes_{i=1}^d \mcV_{i, n_i}\subseteq \mcV(\hat{X})
\end{equation}
 can be expressed by a high dimensional algebraic tensor $G \colon \Lambda \isdef \bigtimes_{i=1}^d [n_i] \to \mathbb{R}$ and tensorized functions $P_\alpha\isdef \bigotimes_{i=1}^d P_{\alpha_i}$ for $\alpha=(\alpha_1,\ldots, \alpha_d)\in\Lambda$ such that
\begin{equation}
    \label{eq: full tensor representation}
    g(\hat x) = \sum_{\bm{\alpha}\in\Lambda} G[\alpha_1, \ldots, \alpha_d]\prod_{i=1}^d P_{\alpha_i}(\hat x_i).
\end{equation}
In contrast to this, the format given by~\eqref{eq:tt representation} and~\eqref{eq:finite tt component} admits a linear structure in the dimension.
More precisely, the memory complexity of $\mathcal{O}(\max\{n_1, \ldots, n_d\}^d)$ in~\eqref{eq: full tensor representation} reduces to 
\begin{equation}
\mathcal{O}(\max\{r_1,\ldots, r_{d-1}\}^2 \cdot d \cdot \max\{n_1,\ldots, n_d\}).
\end{equation}
This observation raises the question of expressibility for certain classes of functions and the existence of a low-rank vector $\bm{r}$ where $\max\{r_1, \ldots, r_{d-1}\}$ is sufficiently small for practical computations.
This issue is \eg addressed in~\cite{schneider2014approximation,bachmayr2017parametric,griebel2013construction} under certain assumptions on the regularity and in~\cite{espig2009black,oseledets2011tensor,ballani2013black,ESTW19} explicit (algorithmic) constructions of the format are discussed even in case that $g$ has no analytic representation.

For later reference we define the finite dimensional low-rank manifold of rank $\bm{r}$ tensor trains by
\begin{equation}
  \label{eq:VMCModelClass}
  \mathcal{M}_{\bm{r}} := \{g\in \mcV(\hat{X})\;\vert\; g \text{ as in }~(\ref{eq:tt representation})\text{ with } g^i\text{ as in }~(\ref{eq:finite tt component}) \}.
\end{equation}
This is an embedded manifold in the finite full tensor space $\mathcal{V}_{\Lambda}$ from~\eqref{eq:definition_V_Lambda} admitting the cone property.
We also require the concept of the algebraic (full) tensor space
\begin{equation}
  \mathbb{T} \isdef \left\{ G\colon \bbN^d \to\bbR \right\}
\end{equation}
and the corresponding low-rank form for given $\bs{r}\in\bbN^{d-1}$ defined by
\begin{equation}
  \mathbb{TT}_{\bs{r}} \isdef \left\{ G\colon\Lambda\to \bbR \;\vert \; G[\alpha] = \sum_{\bs{k}=\bs{1}}^{\bs{r}}\prod_{i=1}^d G[k_{i-1}, \alpha_i, k_i] \right\}.
\end{equation}

Without going into detail, we mention the higher order singular value decomposition (HOSVD), which is used to decompose a full algebraic tensor into a low-rank tensor train. 
The algorithm is based on successive unfoldings of the full tensor into matrices, which are orthogonalized and possibly truncated by a singular value decomposition, see~\cite{oseledets2010tt} for details.
This algorithm enables us to state the following Lemma.
\begin{lemma}[{\cite[Theorem 2.2]{oseledets2010tt}}]
  \label{lem:HOSVD}
  For any $g\in\mcV_{\Lambda}$ and $\bs{r}\in\bbR^{d-1}$ there exists an extended low-rank tensor train $g_{\bs{r}}\in\mathcal{M}_{\bs{r}}$ with
  \begin{equation}
  	\label{eq:hosvd error}
    \norm{g - g_{\bs{r}}}_{\mcV(\hat{X})}^2 \leq \sum_{i=1}^{d-1}\sigma_i^2,
  \end{equation}
  where $\sigma_i$ is the distance of the $i$-th unfolding matrix of the coefficient tensor of $g$ in the HOSVD to its best rank $r_i$ approximation in the Frobenius norm.
\end{lemma}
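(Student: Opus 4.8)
The statement to prove is Lemma~\ref{lem:HOSVD}, which is attributed to \cite[Theorem 2.2]{oseledets2010tt}. The plan is to lift the known finite-dimensional statement for algebraic tensors to the extended (functional) tensor train setting by exploiting the isometry between $\mcV_\Lambda$ and the algebraic tensor space $\bbR^{n_1}\otimes\cdots\otimes\bbR^{n_d}$ furnished by the orthonormal product bases $P_\alpha = \bigotimes_{i=1}^d P_{\alpha_i}$.

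First I would make the identification precise. Since each $\{P_k^i\}$ is a complete orthonormal basis of $L^2(\hat X_i, w_i)$ and $w = \bigotimes_i w_i$, the family $\{P_\alpha : \alpha\in\Lambda\}$ is orthonormal in $\mcV(\hat X) = L^2(\hat X, w)$, and any $g\in\mcV_\Lambda$ has the representation~\eqref{eq: full tensor representation} with a uniquely determined coefficient tensor $G\colon\Lambda\to\bbR$. Orthonormality gives the Parseval identity $\norm{g}_{\mcV(\hat X)}^2 = \sum_{\alpha\in\Lambda} G[\alpha]^2 = \norm{G}_F^2$, i.e. the coordinate map $g\mapsto G$ is a linear isometry from $(\mcV_\Lambda, \norm{\cdot}_{\mcV(\hat X)})$ onto $(\bbR^{n_1\times\cdots\times n_d}, \norm{\cdot}_F)$. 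Moreover this isometry maps $\mcM_{\bm r}$ onto $\mathbb{TT}_{\bm r}$: by~\eqref{eq:finite tt component}, $g\in\mcM_{\bm r}$ holds exactly when its coefficient tensor admits the decomposition $G[\alpha] = \sum_{\bm k = \bm 1}^{\bm r}\prod_{i=1}^d G^i[k_{i-1},\alpha_i,k_i]$, which is the defining property of $\mathbb{TT}_{\bm r}$.

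Next I would invoke the algebraic HOSVD truncation result \cite[Theorem 2.2]{oseledets2010tt}: for the coefficient tensor $G$ there exists $G_{\bm r}\in\mathbb{TT}_{\bm r}$ obtained by the sequence of unfoldings, SVDs and rank-$r_i$ truncations, satisfying $\norm{G - G_{\bm r}}_F^2 \le \sum_{i=1}^{d-1}\sigma_i^2$, where $\sigma_i^2$ is the squared Frobenius distance of the $i$-th unfolding matrix of $G$ to its best rank-$r_i$ approximation (equivalently, the tail sum of its squared singular values beyond index $r_i$). Pulling $G_{\bm r}$ back through the isometry yields $g_{\bm r}\in\mcM_{\bm r}$ with coefficient tensor $G_{\bm r}$, and applying Parseval once more gives
\begin{equation}
  \norm{g - g_{\bm r}}_{\mcV(\hat X)}^2 = \norm{G - G_{\bm r}}_F^2 \le \sum_{i=1}^{d-1}\sigma_i^2,
\end{equation}
which is~\eqref{eq:hosvd error}; here $\sigma_i$ is precisely the distance of the $i$-th unfolding matrix of the coefficient tensor of $g$ to its best rank-$r_i$ approximation, as claimed. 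Since $\mcM_{\bm r}\subseteq\mcV_\Lambda$ this also confirms $g_{\bm r}\in\mcM_{\bm r}$.

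The only genuine content beyond citing \cite{oseledets2010tt} is verifying that the orthonormality of the product basis transfers the argument from the Frobenius norm on coefficient tensors to the $\mcV(\hat X)$-norm on functions; everything else is a transcription. Consequently I do not anticipate a real obstacle — the main care needed is bookkeeping: confirming that $\mcV_\Lambda$ is exactly the image of $\bbR^{n_1\times\cdots\times n_d}$ (so that the truncated tensor indeed corresponds to an element of the finite-dimensional function space and hence of $\mcM_{\bm r}$), and that the truncation ranks $r_i$ in the definition of $\mathbb{TT}_{\bm r}$ match those used in the HOSVD, so the error bound is stated with the same $\bm r$. If one wanted to be fully self-contained one could additionally reproduce the short proof of the algebraic bound (successive best rank-$r_i$ matrix approximations with the Eckart–Young theorem and orthogonal invariance of the Frobenius norm under the partial orthogonalizations), but since the excerpt permits assuming \cite[Theorem 2.2]{oseledets2010tt}, the isometry argument suffices.
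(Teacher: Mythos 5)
Your proposal is correct and follows essentially the same route as the paper, which disposes of the lemma in one line by citing the matrix SVD best-approximation property and the orthonormality of the basis; your write-up simply makes explicit the Parseval isometry between $\mcV_\Lambda$ and the algebraic coefficient tensors and the fact that it carries $\mathcal{M}_{\bs{r}}$ onto $\mathbb{TT}_{\bs{r}}$. No gaps.
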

\begin{proof}
  The proof follows from the best approximation result of the usual matrix SVD with respect to the Frobenius norm and the orthonormality of the chosen basis.
\end{proof}
\begin{remark}
  Estimate~\eqref{eq:hosvd error} is rather unspecific as the $\sigma_i$ cannot be quantified a priori.
  In the special case of Gaussian densities we refer to~\cite{rohrbach2020rank} for an examination of the low-rank representation depending on the covariance structure.
  By considering a transport $\tilde{T}$ that maps the considered density only ``close'' to a standard Gaussian, the results can be applied immediately to our setting and more precise estimates are possible.
\end{remark}

\subsection{Tensor train regression by Variational Monte Carlo}
\label{section:tensor_train_regression}

We review the sampling-based VMC method presented in~\cite{ESTW19} which is employed to construct TT representations of the local maps $\Phi^\ell$ as in~\eqref{eq:local_pert_prior}.
The approach generalizes the concept of randomized tensor completion~\cite{eigel2019non} and its analysis relies on the theory of statistical learning, leading to a priori convergence results.
It can also be seen as a generalized tensor least squares technique.
An alternative cross-interpolation method for probability densities is presented in~\cite{dolgov2018approximation}.

For the VMC framework, consider the \emph{model class} $\mathcal{M}_{\bm{r}}(\underline{c},\overline{c})\subset\mathcal{M}_{\bm{r}}$ of truncated rank $\bm{r}\in\mathbb{R}^{d-1}$ tensor trains which is given for $0\leq\underline{c} < \overline{c}\leq\infty$ by
\begin{equation}
\mathcal{M}_{\bm{r}}(\underline{c},\overline{c})
\isdef
\left\{ g\in \mathcal{M}_{\bm{r}}\;|\; \underline{c} \leq g(\hat{x})\leq \overline{c} \quad\text{a.e. in}\quad \hat{X} \right\}.
\end{equation}
The model class $\mathcal{M}_{\bm{r}}(\underline{c},\overline{c})$ is a finite subset of the truncated nonlinear space $\mathcal{V}(\hat{X}, \underline{c},\overline{c})\subseteq\mcV(\hat{X})$ defined as
\begin{equation}
\mathcal{V}(\hat{X}, \underline{c},\overline{c}) := \{ v\in L^2(\hat{X},w)\;|\; \underline{c} \leq v(\hat{x})\leq \overline{c} \quad\text{a.e. in } \hat{X} \}, 
\end{equation}
which we equip with the metric $d_{\mathcal{V}(\hat{X},\underline{c},\overline{c})}(v,w):=\|v-w\|_{\mathcal{V}}$.

Alternatively, for numerical purposes we may characterize $\mathcal{M}_{\bm{r}}(\underline{c},\overline{c})$ and $\mathcal{V}(\hat{X},\underline{c},\overline{c})$ in terms of constraints on the coefficients of the underlying representation with respect to $\{P_\alpha\}$.
For $\ell^2(\mathbb{T}):=\{ G\in\mathbb{T}\;|\;\sum_{\alpha\in\mathbb{N}^d}G[\alpha]^2<\infty\}$ we have
\begin{align}
\mathcal{V}(\hat{X}, \underline{c},\overline{c})
&= \{v(\hat{x}) = \sum\limits_{\alpha\in\bbN^d} G[\alpha]\cdot P_\alpha(\hat{x})\;|\;  \nonumber
\\ &   G \in \ell^2(\mathbb{T}), \
   \underline{F}(G) \geq 0,\ \overline{F}(G) \leq 0
 \},\\
\label{eq:truncated_model_class}
\mathcal{M}(\underline{c},\overline{c})
&= \{ 
v(\hat{x}) = 
\sum\limits_{\alpha \in \Lambda} G[\alpha]\cdot P_{\alpha}(\hat{x})
\;|\; 
\nonumber
\\
&
G \in \mathbb{TT}_{\bm{r}},\ 
\underline{F}^{\bm{r}}(G) \geq 0,\
 \overline{F}^{\bm{r}}(G) \leq 0 \},
\end{align}
for constraint functions $\underline{F},\overline{F}\colon \ell^2(\mathbb{T}) \to \mathbb{R}$
and \\$\underline{F}^{\bm{r}},\overline{F}^{\bm{r}} \colon \ell^2(\mathbb{TT}_{\bm{r}})\to \mathbb{R}$ implicitly bounding the coefficient tensors.
Note that due to the orthonormality of $\{P_{\alpha}\}_{\alpha\in\bbN^d}$ in $\mathcal{V}(\hat{X})$ for every $v\in\mathcal{V}(\hat{X})$ it holds
\begin{equation}
	 \|v\|_{\mathcal{V}} = \| G \|_{\ell^2(\mathbb{T})}\quad\text{with}\quad  v = \sum\limits_{\alpha}G[\alpha]P_\alpha \in \mathcal{V}.
\end{equation}

Additionally, we define a \emph{loss function} $\iota\colon\mathcal{V}(\hat{X}, \underline{c}, \overline{c})\times \hat{X}\to\bbR$ such that $\iota(\cdot, \hat{x})$ is continuous for almost all $\hat x\in \hat{X}$ and $\iota(v, \cdot)$ is integrable with respect to the weight function $w$ of $\mcV(\hat{X})$ for every $v\in \mcV(\hat{X}, \underline{c},\overline{c})$.
Then, we consider the \emph{cost functional} $\mathscr{J}\colon\mcV(\hat{X}, \underline{c}, \overline{c})\to \bbR$ given by
\begin{equation}
  \label{eq:lossfunctional}
  \mathscr{J}(v) := \int_{\hat X} \iota(v, \hat x) w(\hat x)\mathrm{d}\lambda(\hat x).
\end{equation}

To further analyze the approximability in the given TT format using sampling techniques, we define two common discrepancy measures for probability density functions.
\begin{lemma}(KL loss compatibility)
\label{lem:KL_comp}
Let $h^\ast\in\mcV(\hat{X}, 0, c^*)$ for $c^* <\infty$ and $0<\underline{c} < \overline{c}<\infty$.  
Then 
\begin{align}
\label{eq:definition_iota_kl}
\mcV(\hat{X}, \underline{c}, \overline{c})\ni g \mapsto \iota(g, \hat{x}) &\phantom{:}= \iota(g, \hat{x}, h^\ast) \nonumber\\&:= -\log(g(x))h^\ast(x)
\end{align}
is uniformly bounded and Lipschitz continuous on\\ $\mathcal{M}_{\bm{r}}(\underline{c},\overline{c})$
if $P_{\alpha} \in L^\infty(\hat{X})$ for every $\alpha\in\Lambda$. Furthermore, $\mathscr{J}$ is globally Lipschitz continuous on the metric space $(\mcV(\hat{X}, \underline{c}, \overline{c}), d_{\mathcal{V}(\hat{X},\underline{c},\overline{c})})$.
\end{lemma}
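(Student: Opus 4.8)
The statement has two parts: first, that the pointwise loss $\iota(g,\hat x) = -\log(g(\hat x))\,h^\ast(\hat x)$ is uniformly bounded and Lipschitz on the model class $\mathcal{M}_{\bm r}(\underline c,\overline c)$; second, that the integrated cost functional $\mathscr{J}$ is globally Lipschitz on $(\mcV(\hat X,\underline c,\overline c), d_{\mcV(\hat X,\underline c,\overline c)})$. I would treat the two parts in that order, since the second follows from a local-to-global argument once the right pointwise Lipschitz bound is in place.

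\textbf{Step 1: uniform boundedness.} For $g\in\mathcal{M}_{\bm r}(\underline c,\overline c)$ we have $\underline c \le g(\hat x)\le \overline c$ a.e., so $|\log g(\hat x)| \le \max\{|\log\underline c|,\,|\log\overline c|\}=:L_0<\infty$. Combined with $0\le h^\ast(\hat x)\le c^\ast$ a.e. (from $h^\ast\in\mcV(\hat X,0,c^\ast)$), this gives $|\iota(g,\hat x)| \le L_0 c^\ast$ a.e., uniformly in $g$. This also shows $\iota(g,\cdot)$ is bounded, hence integrable against $w$ since $w\in L^1(\hat X)$, so $\mathscr{J}$ is well defined on the model class.

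\textbf{Step 2: pointwise Lipschitz continuity.} For $g_1,g_2\in\mathcal{M}_{\bm r}(\underline c,\overline c)$, the mean value theorem applied to $\log$ on the interval $[\underline c,\overline c]$ gives $|\log g_1(\hat x)-\log g_2(\hat x)| \le \tfrac{1}{\underline c}|g_1(\hat x)-g_2(\hat x)|$ a.e. Multiplying by $h^\ast$ yields
\[
|\iota(g_1,\hat x)-\iota(g_2,\hat x)| \le \frac{c^\ast}{\underline c}\,|g_1(\hat x)-g_2(\hat x)| \quad\text{a.e. in }\hat X.
\]
To convert this into a Lipschitz estimate in a norm on $\mathcal{M}_{\bm r}(\underline c,\overline c)$, I would use that on the finite-dimensional manifold the relevant topology is that of $\mcV(\hat X)=L^2(\hat X,w)$, and that the hypothesis $P_\alpha\in L^\infty(\hat X)$ for all $\alpha\in\Lambda$ forces every $g\in\mcV_\Lambda$ (in particular every element of $\mathcal{M}_{\bm r}(\underline c,\overline c)\subset\mcV_\Lambda$) to lie in $L^\infty(\hat X)$, with an $L^\infty$ bound controlled by the $\ell^2$ norm of its coefficient tensor times $\sum_{\alpha\in\Lambda}\|P_\alpha\|_{L^\infty}$ (a finite sum since $\Lambda$ is finite). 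However, since $g_1,g_2$ are already a.e.\ bounded by $\overline c$, the cleaner route is: $|g_1(\hat x)-g_2(\hat x)|\le 2\overline c$ a.e., hence $|g_1-g_2|^2 \le 2\overline c\,|g_1-g_2|$ pointwise, so $\|g_1-g_2\|_{L^1(\hat X,w)} \ge \tfrac{1}{2\overline c}\|g_1-g_2\|_{L^2(\hat X,w)}^2$ — this is the wrong direction. Instead I would simply bound the $L^1(w)$ distance by the $L^2(w)$ distance via Cauchy–Schwarz, using $w\in L^1(\hat X)$: $\|g_1-g_2\|_{L^1(\hat X,w)}\le \|w\|_{L^1}^{1/2}\,\|g_1-g_2\|_{\mcV}$. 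This is where the finite-measure character of $w\,d\lambda$ is essential.

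\textbf{Step 3: global Lipschitz continuity of $\mathscr{J}$.} Integrating the pointwise bound from Step 2 against $w\,d\lambda$ and applying the Cauchy–Schwarz bound,
\[
|\mathscr{J}(v_1)-\mathscr{J}(v_2)| \le \int_{\hat X} |\iota(v_1,\hat x)-\iota(v_2,\hat x)|\,w(\hat x)\,\mathrm{d}\lambda(\hat x)
\le \frac{c^\ast}{\underline c}\,\|w\|_{L^1(\hat X)}^{1/2}\,\|v_1-v_2\|_{\mcV},
\]
valid for all $v_1,v_2\in\mcV(\hat X,\underline c,\overline c)$ (the argument for Step 2 only used $\underline c\le v_i\le\overline c$ a.e., not membership in $\mathcal{M}_{\bm r}$). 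Since $d_{\mcV(\hat X,\underline c,\overline c)}(v_1,v_2)=\|v_1-v_2\|_{\mcV}$ by definition, this is exactly global Lipschitz continuity with constant $\tfrac{c^\ast}{\underline c}\|w\|_{L^1}^{1/2}$.

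\textbf{Main obstacle.} There is no deep difficulty here; the statement is essentially bookkeeping. The one point that needs care is the role of the assumption $P_\alpha\in L^\infty(\hat X)$: in the abstract setting, Lipschitz continuity of $g\mapsto\iota(g,\cdot)$ "on $\mathcal{M}_{\bm r}(\underline c,\overline c)$" should be read with respect to an ambient norm, and the cleanest honest statement is Lipschitz continuity in the $\mcV$-metric uniformly in $\hat x$ once the a.e.\ bounds $\underline c\le g\le\overline c$ are invoked — which makes the $L^\infty$-basis hypothesis somewhat redundant for this particular lemma (it is genuinely needed elsewhere, e.g.\ to guarantee $\mathcal{M}_{\bm r}(\underline c,\overline c)$ is nonempty and that the statistical learning bounds of~\cite{ESTW19} apply). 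I would flag this and present the estimate in the form above, since it is the version actually used in the convergence analysis of Section~\ref{sec:error estimates}.
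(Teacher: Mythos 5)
Your boundedness argument and your Step 3 (global Lipschitz continuity of $\mathscr{J}$ via the pointwise mean-value bound, the $L^1(w)$ estimate, and the Cauchy--Schwarz embedding $L^2(\hat X,w)\hookrightarrow L^1(\hat X,w)$) coincide with the paper's proof. The problem is your handling of the first claim, and in particular your concluding remark that the hypothesis $P_\alpha\in L^\infty(\hat X)$ is ``somewhat redundant for this particular lemma.'' It is not, and dismissing it leaves that claim unproved in the sense in which the paper needs it.

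``Lipschitz continuous on $\mathcal{M}_{\bm r}(\underline c,\overline c)$'' here means: for a.e.\ fixed $\hat x$, the map $g\mapsto\iota(g,\hat x)$ is Lipschitz from $(\mathcal{M}_{\bm r}(\underline c,\overline c),\, d_{\mathcal{V}(\hat X,\underline c,\overline c)})$ to $\mathbb{R}$, with a constant uniform in $\hat x$. This is the property consumed by Lemma~\ref{lem:gen_error}: the covering numbers of Definition~\ref{def:covering_number} are taken with respect to $d_{\mathcal{V}(\hat X,\underline c,\overline c)}$, and the argument of \cite[Thm.~4.12]{ESTW19} requires the \emph{empirical} functional $\mathscr{J}_N$ — a sum of point evaluations of $\iota$ — to be Lipschitz in that same metric with a sample-independent constant. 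Your mean-value bound $\abs{\iota(g_1,\hat x)-\iota(g_2,\hat x)}\le \frac{c^\ast}{\underline c}\abs{g_1(\hat x)-g_2(\hat x)}$ controls the loss difference by a \emph{pointwise} difference, not by the $\mathcal{V}$-distance; two elements of the model class can be arbitrarily close in $L^2(\hat X,w)$ while differing by $O(1)$ pointwise on a small set, so no Lipschitz estimate in $d_{\mathcal{V}}$ follows from this alone. The missing step is the inverse estimate
\begin{equation*}
\abs{g_1(\hat x)-g_2(\hat x)} \;=\; \Bigl|\sum_{\alpha\in\Lambda}(G_1-G_2)[\alpha]\,P_\alpha(\hat x)\Bigr|
\;\le\; c\,\norm{G_1-G_2}_{\ell^2(\mathbb{T})} \;=\; c\,\norm{g_1-g_2}_{\mathcal{V}},
\end{equation*}
with $c=c\bigl(\sup_{\alpha\in\Lambda}\norm{P_\alpha}_{L^\infty(\hat X)}\bigr)$, valid precisely because $\Lambda$ is finite, the $P_\alpha$ are orthonormal (Parseval) and uniformly bounded. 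You actually mention this mechanism in Step 2 before abandoning it for the ``cleaner route''; the cleaner route (Cauchy--Schwarz into $L^1$) only yields the integrated statement about $\mathscr{J}$, which is the second claim, not the first. To repair the proof, reinstate the inverse estimate as the bridge from the pointwise mean-value bound to the $d_{\mathcal{V}}$-Lipschitz property on $\mathcal{M}_{\bm r}(\underline c,\overline c)$, and delete the claim that the $L^\infty$ hypothesis is dispensable.
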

\begin{proof}
The loss $\iota$ is bounded on $\mathcal{M}_{\bm{r}}(\underline{c},\overline{c})$ since $0 < \underline{c} < \overline{c}<\infty$. 
Let $g_1, g_2\in \mathcal{V}_{\bm{r}}(\hat{X},\underline{c},\overline{c})$ with coefficient tensors $G_1$ and $G_2\in\mathbb{TT}_{\bs{r}}$, then
\begin{equation}
	\label{eq:KL_comp_L}
	\abs{\iota(g_1, \hat{x}) - \iota(g_2, \hat{x})} 
	\leq
	\underbrace{\frac{1}{\underline{c}}\sup\limits_{\hat{x} \in \hat{X}} \{h^\ast(\hat{x})\}}_{:=C^\ast<\infty} |g_1(\hat{x})- g_2(\hat{x})|.
\end{equation}
The global Lipschitz continuity of $\mathscr{J}$ follows by using~\eqref{eq:KL_comp_L} and
\begin{align}
|\mathscr{J}(g_1)-\mathscr{J}(g_2)| &\leq C^\ast \|g_1-g_2\|_{L^1(\hat{X},w)}\nonumber \\& \leq C C^\ast d_{\mathcal{V}(\hat{X},\underline{c},\overline{c})}(g_1,g_2),
\end{align}
with a constant $C$ related to the embedding of $L^2(\hat{X},w)$ into $L^1(\hat{X},w)$.
If $g_1,g_2$ are in $\mathcal{M}_{\bm{r}}(\underline{c},\overline{c})$ then by Parseval's identity and the finite dimensionality of $\mathcal{M}_{\bm{r}}(\underline{c}, \overline{c})$ there exists $c=c\left(\sup_{\alpha\in\Lambda} \|P_\alpha\|_{L^\infty(\hat{X})}\right)>0 $ such that
\begin{align}
\label{eq_KL_comp_lifting}
|g_1(x)-g_2(x)| \leq c\|G_1-G_2\|_{\ell^2(\mathbb{T})} &= c\|g_1-g_2\|_{\mathcal{V}} \nonumber\\&= c\, d_{\mathcal{V}(\hat{X},\underline{c},\overline{c})}(g_1,g_2),
\end{align}
which yields the Lipschitz continuity on $\mathcal{M}_{\bm{r}}(\underline{c}, \overline{c})$.
Now let $g_1,g_2 \in\mathcal{V}(\hat{X},\underline{c},\overline{c})$.
The global Lipschitz continuity of $\mathscr{J}$ follows by using~\eqref{eq:KL_comp_L} and
\begin{align}
|\mathscr{J}(g_1)-\mathscr{J}(g_2)| &\leq C^\ast \|g_1-g_2\|_{L^1(\hat{X},w)}\nonumber\\& \leq C C^\ast d_{\mathcal{V}(\hat{X},\underline{c},\overline{c})}(g_1,g_2),
\end{align}
with a constant $C$ related to the embedding of $L^2(\hat{X},w)$ into $L^1(\hat{X},w)$.
\end{proof}

\begin{lemma}($L^2$-loss compatibility)
\label{lem:quadratic_loss}
Let $h^\ast\in\mcV(\hat{X}, 0, \overline{c})$ for $\overline{c}<\infty$.
Then
\begin{equation}
\label{eq:definition_iota_l2}
\mcV(\hat{X}, 0, \overline{c})\ni g \mapsto \iota(g, \hat{x}) = \iota(g, \hat{x}, h^*) := |g(\hat{x})-h^*(\hat{x})|^2
\end{equation}
is uniformly bounded and Lipschitz continuous on\\ $\mathcal{M}_{\bm{r}}(0,\overline{c})$
 provided $P_{\alpha} \in L^\infty(\hat{X})$ for every $\alpha\in\Lambda$. 
\end{lemma}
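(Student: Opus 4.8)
The plan is to mirror the argument used for Lemma~\ref{lem:KL_comp}, noting that the quadratic loss is in fact easier to control than the logarithmic one. First I would establish uniform boundedness: for $g\in\mathcal{M}_{\bm{r}}(0,\overline{c})$ one has $0\le g(\hat x)\le\overline{c}$ a.e.\ in $\hat X$ by definition of the model class, and by hypothesis $h^\ast\in\mcV(\hat{X},0,\overline{c})$ also satisfies $0\le h^\ast(\hat x)\le\overline{c}$ a.e. Hence $\abs{g(\hat x)-h^\ast(\hat x)}\le\overline{c}$, so $\iota(g,\hat x)=\abs{g(\hat x)-h^\ast(\hat x)}^2\le\overline{c}^2$ for almost every $\hat x$, which gives the uniform bound.

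For Lipschitz continuity I would factor the difference of two quadratic losses. For $g_1,g_2\in\mathcal{M}_{\bm{r}}(0,\overline{c})$,
\[
\abs{\iota(g_1,\hat x)-\iota(g_2,\hat x)} = \bigl\lvert (g_1(\hat x)-h^\ast(\hat x))^2-(g_2(\hat x)-h^\ast(\hat x))^2 \bigr\rvert = \abs{g_1(\hat x)-g_2(\hat x)}\cdot\abs{g_1(\hat x)+g_2(\hat x)-2h^\ast(\hat x)}.
\]
Since $g_1,g_2,h^\ast$ all take values in $[0,\overline{c}]$, the second factor is bounded by $2\overline{c}$, so $\abs{\iota(g_1,\hat x)-\iota(g_2,\hat x)}\le 2\overline{c}\,\abs{g_1(\hat x)-g_2(\hat x)}$ pointwise a.e.

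It then remains to lift this pointwise control to the metric $d_{\mathcal{V}(\hat{X},0,\overline{c})}$, for which I would reuse verbatim the estimate~\eqref{eq_KL_comp_lifting} from the proof of Lemma~\ref{lem:KL_comp}: because $\mathcal{M}_{\bm{r}}(0,\overline{c})$ is contained in the finite-dimensional full tensor space $\mcV_\Lambda$ and the orthonormal system satisfies $P_\alpha\in L^\infty(\hat X)$ for all $\alpha\in\Lambda$, Parseval's identity together with finite dimensionality yields a constant $c=c\bigl(\sup_{\alpha\in\Lambda}\norm{P_\alpha}_{L^\infty(\hat{X})}\bigr)>0$ with
\[
\abs{g_1(\hat x)-g_2(\hat x)}\le c\,\norm{g_1-g_2}_{\mcV}=c\,d_{\mathcal{V}(\hat{X},0,\overline{c})}(g_1,g_2).
\]
Combining this with the pointwise bound gives $\abs{\iota(g_1,\hat x)-\iota(g_2,\hat x)}\le 2\overline{c}\,c\,d_{\mathcal{V}(\hat{X},0,\overline{c})}(g_1,g_2)$ uniformly in $\hat x$, which is the claimed Lipschitz continuity on $\mathcal{M}_{\bm{r}}(0,\overline{c})$.

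There is no serious obstacle here; the only delicate point is this last lifting step, which genuinely requires the finite dimensionality of the model class (a pointwise bound by the $\mcV$-norm fails on the infinite-dimensional space $\mcV(\hat X)$) and the a priori $L^\infty$-bound on the basis. If desired, one may add, exactly as in Lemma~\ref{lem:KL_comp}, that $\mathscr{J}$ is then globally Lipschitz on $(\mcV(\hat{X},0,\overline{c}),d_{\mathcal{V}(\hat{X},0,\overline{c})})$ via $\abs{\mathscr{J}(g_1)-\mathscr{J}(g_2)}\le 2\overline{c}\,\norm{g_1-g_2}_{L^1(\hat X,w)}\le C\,d_{\mathcal{V}(\hat{X},0,\overline{c})}(g_1,g_2)$ using the continuous embedding $L^2(\hat X,w)\hookrightarrow L^1(\hat X,w)$, although this is not needed for the stated conclusion.
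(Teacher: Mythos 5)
Your proposal is correct and follows essentially the same route as the paper: a pointwise bound of the form $\abs{\iota(g_1,\hat x)-\iota(g_2,\hat x)}\lesssim \overline{c}\,\abs{g_1(\hat x)-g_2(\hat x)}$ (you via the difference-of-squares factorization, the paper via expanding into $\abs{g_1-g_2}\,\abs{g_1+g_2}+2\abs{g_1-g_2}\,h^\ast$), followed by the same finite-dimensional Parseval lifting borrowed from Lemma~\ref{lem:KL_comp}. Your explicit verification of the uniform bound $\iota\le\overline{c}^2$ is a small addition the paper leaves implicit.
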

\begin{proof}
Let $g_1, g_2\in \mathcal{V}(\hat{X},0,\overline{c})$. Then
\begin{align}
	\abs{\iota(g_1, \hat{x}) - \iota(g_2, \hat{x})} &\leq 
	| g_1(\hat{x}) - g_2(\hat{x})|\cdot|g_2(\hat{x})+g_2(\hat{x})|\nonumber\\&
	+ 2|g_1(\hat{x})- g_2(\hat{x})| h^\ast(\hat{x}).
\end{align}
Due to $\overline{c}<\infty$ the Lipschitz property follows as in the proof of Lemma~\ref{lem:KL_comp} if $g_1,g_2$ in $\mathcal{M}_{\bm{r}}(\underline{c},\overline{c})$. 
\end{proof}

To examine the VMC convergence in our setting, we recall the analysis of~\cite{ESTW19} in a slightly more general manner. 
The target objective of the method is to find a minimizer 
\begin{equation}
\label{eq:min_problem_cont}
v^\ast \in \argmin\limits_{v\in\mathcal{V}(\hat{X},\underline{c},\overline{c})}\mathscr{J}(v).
\end{equation}
Due to the infinite dimensional setting we confine the minimization problem in~\eqref{eq:min_problem_cont}
to our model class $\mathcal{M}=\mathcal{M}_{\bm{r}}(\underline{c},\overline{c})$.
This yields the minimization problem
\begin{equation}
\label{eq:min_problem_M}
\text{find} \; v^\ast_{\mathcal{M}} \in \argmin\limits_{v\in\mathcal{M}}\mathscr{J}(v).
\end{equation}
A crucial step is then to consider the empirical functional instead of the integral in $\mathscr{J}$, namely
\begin{equation}
\label{eq:empirical_cost_functional}
\mathscr{J}_N(v) := \frac{1}{N}\sum\limits_{k=1}^N \iota(v;\hat{x}^k),
\end{equation}
with independent samples $\{\hat{x}^k\}_{k\leq N}$ distributed according to the measure $w\lambda$ with a (possibly rescaled) weight function $w$ with respect to the Lebesgue measure $\lambda$. 
The corresponding empirical optimization problem then takes the form
\begin{equation}
\label{eq:min_problem_MN}
\text{find} \; v^\ast_{\mathcal{M},N} \in \argmin\limits_{v\in\mathcal{M}}\mathscr{J}_N(v).
\end{equation}
The analysis examines different errors with respect to $h^\ast\in\mathcal{V}(\hat{X},0,\overline{c})$ defined by
\begin{align}
&&\mathcal{E} &:= \left|\mathscr{J}(h^\ast) - \mathscr{J}\left(v^\ast_{\mathcal{M},N}\right)\right|,&&\\\ 
,&&\mathcal{E}_{\mathrm{app}} &:= \left| \mathscr{J}(h^\ast) - \mathscr{J}\left(v^\ast_{\mathcal{M}}\right)\right| 
,&&\\ 
&&\mathcal{E}_{\mathrm{gen}} &:= \left|\mathscr{J}\left(v^\ast_{\mathcal{M}}\right) - \mathscr{J}\left(v^\ast_{\mathcal{M},N}\right)\right|,&& 
\end{align}
denoting the VMC-, approximation- and generalization error respectively.
By a simple splitting, the VMC error can be bounded by the approximation and the generalization error, namely
\begin{equation}
\label{eq:vmc_error_bound_app_gen}
\mathcal{E} \leq \mathcal{E}_{\mathrm{app}} + \mathcal{E}_{\mathrm{gen}}.
\end{equation}
Due to the global Lipschitz property on $\mathcal{V}(\hat{X},\underline{c},\overline{c})$ with $\underline{c} > 0 $ in the setting of~\eqref{eq:definition_iota_kl}  or  $\underline{c}\geq 0$ as in~\eqref{eq:definition_iota_l2}, the approximation error can be bounded by the best approximation in $\mathcal{M}$.
In particular there exists $C>0$ such that
\begin{equation}
\label{eq:quasi best approximation error}
\mathcal{E}_{\mathrm{app}} \leq C \inf\limits_{v\in \mathcal{M}}\|h^\ast-v\|_{\mathcal{V}(\hat{X})}^2.
\end{equation} 

We note that such an estimation by the best approximation in $\mathcal{M}$ with respect to the $\mathcal{V}(\hat{X})$-norm may not be required when using the Kullback-Leibler divergence if one is interested directly in the best approximation in this divergence.
Then the assumption $\underline{c}>0$ can be relaxed in the construction of $\mathcal{V}(\hat{X},\underline{c},\overline{c})$ since no global Lipschitz continuity of $\mathscr{J}$ in Lemma~\ref{lem:KL_comp} is required.
Thus the more natural subspace of $\mathcal{V}(\hat{X},0,\overline{c})$ of absolutely continuous functions with respect to $h^\ast$ may be considered instead.

It remains to bound the statistical generalization error $\mathcal{E}_{\mathrm{gen}}$.
For this the notion of covering numbers is required. Let $(\Omega,\mathcal{F},\mathbb{P})$ be an abstract probability space.
\begin{definition}(\textbf{covering number})
\label{def:covering_number}
Let $\epsilon > 0$. The covering number $\nu(\mathcal{M},\epsilon)$ denotes the minimal number of open balls of radius $\epsilon$ with respect to the metric $d_{\mathcal{V}(\hat{X},\underline{c},\overline{c})}$ needed to cover $\mathcal{M}$.
\end{definition}

\begin{lemma}
\label{lem:gen_error}
Let $\iota$ be defined as in~\eqref{eq:definition_iota_kl} or~\eqref{eq:definition_iota_l2}. 
Then there exist $C_1,C_2>0$ only depending on the uniform bound and the Lipschitz constant of $\mathcal{M}$ given in Lemma~\ref{lem:KL_comp} and~\ref{lem:quadratic_loss}, respectively, such that for $\epsilon >0$ and $N\in\mathbb{N}$ denoting the number of samples in the empirical cost functional in~\eqref{eq:empirical_cost_functional} it holds 
\begin{equation}
\mathbb{P}[\mathcal{E}_{\mathrm{gen}}>\epsilon]  \leq 2\nu(\mathcal{M}, C_2^{-1}\epsilon) \delta(1/4\epsilon, N),
\end{equation}
with $\delta(\epsilon,N)\leq 2\exp(-2\epsilon^2N/C_1^2)$.
\end{lemma}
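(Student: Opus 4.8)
The plan is to use a classical uniform-deviation argument from statistical learning theory, combining a union bound over an $\epsilon$-net of the model class with Hoeffding's inequality on each net point, and then transferring the estimate back to the whole class by Lipschitz continuity. Recall that $\mathcal{E}_{\mathrm{gen}} = |\mathscr{J}(v^\ast_{\mathcal{M}}) - \mathscr{J}(v^\ast_{\mathcal{M},N})|$ where $v^\ast_{\mathcal M}$ and $v^\ast_{\mathcal M,N}$ are the minimizers of $\mathscr J$ and $\mathscr J_N$ over $\mathcal M = \mathcal M_{\bm r}(\underline c,\overline c)$ respectively. First I would bound this by the uniform deviation: since $\mathscr{J}(v^\ast_{\mathcal{M}}) \leq \mathscr{J}(v^\ast_{\mathcal M,N})$ and $\mathscr{J}_N(v^\ast_{\mathcal M,N}) \leq \mathscr{J}_N(v^\ast_{\mathcal M})$, a short computation gives
\begin{equation}
\mathcal{E}_{\mathrm{gen}} \leq 2\sup_{v\in\mathcal{M}}\bigl|\mathscr{J}(v) - \mathscr{J}_N(v)\bigr|,
\end{equation}
so it suffices to control $\mathbb{P}[\sup_{v\in\mathcal M}|\mathscr J(v) - \mathscr J_N(v)| > \epsilon/2]$.

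Next I would discretize $\mathcal{M}$. Fix $\epsilon>0$ and let $\{v_1,\dots,v_m\}$ with $m = \nu(\mathcal{M}, C_2^{-1}\epsilon)$ be the centers of a minimal $C_2^{-1}\epsilon$-covering of $\mathcal{M}$ in the metric $d_{\mathcal{V}(\hat X,\underline c,\overline c)}$, which exists by Definition~\ref{def:covering_number}. For any $v\in\mathcal M$ pick $v_j$ with $d_{\mathcal{V}(\hat X,\underline c,\overline c)}(v,v_j) \leq C_2^{-1}\epsilon$. Using the Lipschitz continuity of $\mathscr{J}$ on $\mathcal{M}_{\bm r}(\underline c,\overline c)$ established in Lemma~\ref{lem:KL_comp} (respectively Lemma~\ref{lem:quadratic_loss}), and the pointwise Lipschitz continuity of $\iota(\cdot,\hat x)$ on $\mathcal M$ together with~\eqref{eq_KL_comp_lifting} which lifts the $\mathcal V$-metric to a uniform pointwise bound (so that $\mathscr J_N$ is also Lipschitz on $\mathcal M$ with the same constant), we get with $C_2$ the common Lipschitz constant
\begin{equation}
\bigl|\mathscr{J}(v) - \mathscr{J}_N(v)\bigr| \leq \bigl|\mathscr{J}(v_j) - \mathscr{J}_N(v_j)\bigr| + 2C_2\cdot C_2^{-1}\epsilon',
\end{equation}
and choosing the net radius so that the Lipschitz slack is at most $\epsilon/4$, it reduces to controlling the finitely many deviations $|\mathscr J(v_j) - \mathscr J_N(v_j)|$ at scale $\epsilon/4$.

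For each fixed net point $v_j$, the quantity $\mathscr{J}_N(v_j) = \frac1N\sum_{k=1}^N \iota(v_j;\hat x^k)$ is an average of $N$ i.i.d.\ bounded random variables with mean $\mathscr{J}(v_j)$ (the boundedness of $\iota$ on $\mathcal M_{\bm r}(\underline c,\overline c)$ is exactly what Lemmas~\ref{lem:KL_comp} and~\ref{lem:quadratic_loss} provide, with the range controlled by $C_1$). Hoeffding's inequality then yields $\mathbb{P}[|\mathscr{J}(v_j) - \mathscr{J}_N(v_j)| > \epsilon/4] \leq \delta(\epsilon/4,N)$ with $\delta(\epsilon,N) = 2\exp(-2\epsilon^2 N/C_1^2)$. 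A union bound over the $m = \nu(\mathcal M, C_2^{-1}\epsilon)$ net points, followed by the factor $2$ from the symmetrization step at the very beginning, gives
\begin{equation}
\mathbb{P}[\mathcal{E}_{\mathrm{gen}} > \epsilon] \leq 2\,\nu(\mathcal{M}, C_2^{-1}\epsilon)\,\delta(\tfrac14\epsilon, N),
\end{equation}
which is the claim. The only genuinely delicate point is bookkeeping the constants $C_1$ and $C_2$ so that they depend only on the uniform bound and Lipschitz constant of $\iota$ on $\mathcal M$ (not on $N$ or $\epsilon$); in particular one must be careful that the lifting estimate~\eqref{eq_KL_comp_lifting}, which relies on the finite dimensionality of $\mathcal M_{\bm r}(\underline c,\overline c)$ and $\sup_\alpha\|P_\alpha\|_{L^\infty}$, makes both $\mathscr J$ and $\mathscr J_N$ Lipschitz with a common constant so the net argument closes. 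Everything else is a routine instantiation of the standard finite-class-plus-covering generalization bound.
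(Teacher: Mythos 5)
Your proof is correct, but it takes a different route from the paper: the paper's entire proof of Lemma~\ref{lem:gen_error} is a one-line reduction, observing that Lemmas~\ref{lem:KL_comp} and~\ref{lem:quadratic_loss} supply the uniform boundedness and Lipschitz continuity of the loss required as hypotheses, and then invoking the general VMC generalization bound of~\cite[Thm.~4.12, Cor.~4.19]{ESTW19}. You instead unfold that cited result from scratch: the reduction of $\mathcal{E}_{\mathrm{gen}}$ to twice the uniform deviation $\sup_{v\in\mathcal M}|\mathscr J(v)-\mathscr J_N(v)|$ via the two minimizing properties, the $\epsilon$-net over $\mathcal M$ of cardinality $\nu(\mathcal M, C_2^{-1}\epsilon)$, the transfer of the deviation from an arbitrary $v$ to its net point using the Lipschitz continuity of both $\mathscr J$ and $\mathscr J_N$ (the latter correctly justified through the pointwise lifting~\eqref{eq_KL_comp_lifting} on the finite-dimensional model class), and two-sided Hoeffding plus a union bound on the net points. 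This is in substance the same argument that underlies the cited theorem, so nothing is conceptually new, but your version is self-contained and makes visible exactly where each hypothesis (boundedness for Hoeffding, Lipschitz continuity for the net transfer, finite dimensionality for the uniform pointwise control of $\mathscr J_N$) enters, whereas the paper's version is shorter and defers all of this to the established VMC analysis. The only soft spot is the constant bookkeeping you yourself flag: with net radius $C_2^{-1}\epsilon$ the Lipschitz slack is not automatically $\leq\epsilon/4$, so $C_2$ must be chosen to absorb the factor from the symmetrization step and the two Lipschitz transfers; since the lemma asserts only the existence of some $C_1,C_2>0$ depending on the bound and Lipschitz constant, this is harmless and consistent with how the statement is phrased.
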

\begin{proof}
The claim follows immediately from Lemmas~\ref{lem:KL_comp} and~\ref{lem:quadratic_loss}, respectively, and~\cite[Thm. 4.12, Cor. 4.19]{ESTW19}.
\end{proof}

\begin{remark}[choice of $\underline{c},\overline{c}$ and $\hat{X}$]
Due to the layer based representation in~\eqref{eq:density_decomposition} and~\eqref{eq:layer representation + split} on each layer $\hat{X}^\ell = \Phi^{-1}(X^\ell)$
we have the freedom to choose $\underline{c}$ separately.
In particular, assuming that the perturbed prior $\tilde{f}_0$ decays per layer, we can choose $\underline{c}$ according to the decay and with this control the constant in~\eqref{eq:KL_comp_L}.	
\end{remark}

\section{Error estimates}
\label{sec:error estimates}
This section is devoted to the derivation of a priori error estimates for the previously introduced construction in terms of the Hellinger distance and Kullback-Leibler divergence.
We employ the VMC approach from Section~\ref{section:tensor_train_regression} to the density layer approximation which leads to a convergence result.

Recall that our goal is to approximate the perturbed prior $\tilde{f}_0$ given some transport $\tilde{T}$ represented by a function $\tilde{f}_0^{\mathrm{Trun},\mathrm{TT}}$ defined by
\begin{equation}
    \label{eq:full_discrete_representation}
    \tilde{f}_0^{\on{Trun},\mathrm{TT}}(x) :=
    C_L^{\mathrm{TT}}
    \left\{
    \begin{array}{ll}
    \tilde{f}_0^{\ell,\mathrm{TT}}(x), & x\in X^\ell, \ell = 1,\ldots,L, \\
    f_{\Sigma,\mu}(x), & x \in X^{L+1}.
    \end{array}
    \right.
\end{equation}
Here, $C_L^{\mathrm{TT}}:=(C_L^{<}+ C_L^{>,\mathrm{TT}})^{-1}$ with $C_L^{<}$ from \eqref{eq:norm constant<} and 
\begin{equation}
    C_L^{>,\mathrm{TT}} := \sum\limits_{\ell=1}^L \int\limits_{X^\ell} \tilde{f}_0^{\ell,\mathrm{TT}}(x)\,\mathrm{d}\lambda(x).
\end{equation}
Furthermore, $\tilde{f}_0^{\ell,\mathrm{TT}} = \hat{f}_0^{\ell,\mathrm{TT},N_\ell}\circ\left(\Phi^{\ell}\right)^{-1}$ is the pullback of a function $\hat{f}_0^{\ell,\mathrm{TT},N_\ell}$ in $\mathcal{M}^\ell=\mathcal{M}(\underline{c}_\ell,\overline{c}_\ell)$ over $\hat{X}^\ell$.
Analog to the empirical minimization problem~\eqref{eq:empirical_cost_functional} with $w_\ell = |\operatorname{det}\mathcal{J}_{\Phi^\ell}|$, we choose $\hat{f}_0^{\ell,\mathrm{TT},N_\ell}$ as 
\begin{equation}
\label{eq:layer_minimization}
\hat{f}_0^{\ell,\mathrm{TT},N_\ell} \in\argmin\limits_{v\in\mathcal{M}^{\ell}} \frac{1}{N_\ell}\sum\limits_{k=1}^{N_\ell} \iota(v,\hat{x}^k,\hat{f}_0),
\end{equation}
with samples $\{\hat{x}^k\}_{k=1}^{N_\ell}$ drawn from the (possibly rescaled) finite measure $w_\ell\lambda$.
The connection to the actual approximation of the target density $f$ given by 
\begin{equation}
\label{eq:posterior_apprx}
\tilde{f}^{TT}:=  \tilde{f}_0^{\mathrm{Trun},\mathrm{TT}} \circ \tilde{T}^{-1} \otimes |\mathcal{J}_{\tilde{T}^{-1}}|
\end{equation}
is reviewed in the following.
We refer to Figure~\ref{fig:overview} for a visual presentation of the involved objects, approximations and transformations.
\begin{figure*}
  \centering
\begin{tikzpicture}
\foreach \x in {0,3,6}{
\draw[ fill = blue, opacity = 0.25] (0.42, -7+\x) -- (0.85, -7+\x) -- (0.85, -5+\x) -- (0.42,-5+\x) -- cycle;
\fill[blue,even odd rule, opacity = 0.25] (5.25,-\x) circle (0.01*30) (5.25,-\x) circle (0.01*55);
}

\foreach[evaluate=\c using int(\l-5)] \l in  {5,20,35,50,65,80,95}{
  \def\opac{1-0.008*\l}
  \draw[lightgray!\c!black, thick, opacity = \opac, rotate around={-45:(10,-1.)}] (10,-1.) ellipse ({0.004*\l} and {0.015*\l});
}


\foreach[evaluate=\c using int(\l-5)] \l in {5,20,35,50,65,80,95}{ 
  \def\opac{1-0.008*\l}
  \draw[lightgray!\c!black, thick, opacity = \opac] (5.25,0) circle(0.01*\l);
}

\foreach[evaluate=\c using int(\l-5)] \l in {5,20,35,50,65,80,95,110,125}{ 
  \def\opac{1-0.005*\l}
  \draw[lightgray!\c!black, thick, opacity = \opac] (0.01*\l,-0.8)--(0.01*\l,0.8);
}

\foreach[evaluate=\c using int(\l-5)] \l in  {5,20,35,50,65,80}{
  \def\opac{1-0.008*\l}
  \draw[lightgray!\c!black, thick, opacity = \opac, rotate around={-45:(10,-4)}] (10,-4) ellipse ({0.004*\l} and {0.016*\l});
}
\foreach[evaluate=\c using int(\l-5)] \l in  {95,105}{
  \def\opac{1-0.008*\l}
  \draw[lightgray!\c!black, thick, opacity = \opac, rotate around={-45:(10,-4)}] (10,-4) ellipse ({0.005*\l} and {0.014*\l});
}

\foreach[evaluate=\c using int(\l-5)] \l in {5,20,35,50,65,80,95}{ 
  \def\opac{1-0.008*\l}
  \draw[lightgray!\c!black, thick, opacity = \opac, rotate around={45:(5.25,-3)}] (5.25,-3) ellipse({0.012*\l} and {0.01*\l});
}

\foreach[evaluate=\c using int(\l-5)] \l in {5,20,35,50,65,80,95, 110, 125}{ 
  \def\opac{1-0.005*\l}
  \draw[lightgray!\c!black, thick, opacity = \opac] (0.01*\l,-0.8 -3)to [out=90-0.6*\l,in=-90](0.01*\l,0.8-3);
}

\foreach[evaluate=\c using int(\l-5)] \l in {5,20,35,50,65}{ 
  \def\opac{1-0.005*\l}
  \draw[lightgray!\c!black, thick, opacity = \opac] (0.01*\l,-0.8 -6)to [out=90-0.6*\l,in=-90](0.01*\l,0.8-6);
}
\foreach[evaluate=\c using int(\l-5)] \l in {80, 95, 110, 125}{ 
  \def\opac{1-0.005*\l}
  \draw[lightgray!\c!black, thick, opacity = \opac] (0.1+0.01*\l,-0.8 -6)to [out=90,in=-90](0.1+0.01*\l,0.8-6);
}

\foreach[evaluate=\c using int(\l-5)] \l in {5,20,35,50,65}{ 
  \def\opac{1-0.008*\l}
  \draw[lightgray!\c!black, thick, opacity = \opac, rotate around={45:(5.25,-6)}] (5.25,-6) ellipse({0.012*\l} and {0.01*\l});
}
\foreach[evaluate=\c using int(\l-5)] \l in {90,105}{ 
  \def\opac{1-0.008*\l}
  \draw[lightgray!\c!black, thick, opacity = \opac, rotate around={45:(5.25,-6)}] (5.25,-6) ellipse({0.01*\l} and {0.01*\l});
}

\node at (0.65, 1.3) {\scriptsize (approximation domain)};
\node at (5.25, 1.3) {\scriptsize (reference domain)};
\node at (10, 1.3) {\scriptsize (target domain)};

\node at (0.65, 1.75) {${\color{blue!60!white}\hat{X}^\ell}\subset\hat{X}$};
\node at (5.25, 1.75) {${\color{blue!60!white}X^\ell}\subset X$};
\node at (10.0, 1.75) {$Y$};

\draw[->] (1.75,0) -- (3.75,0);
\draw[->] (1.75,-3) -- (3.75,-3);
\draw[->] (1.75,-6) -- (3.75,-6);

\node at (2.75,0.5) {$\Phi^\ell$};
\node at (2.75,0.5-3) {$\Phi^\ell$};
\node at (2.75,0.5-6) {$\Phi^\ell$};

\draw[->] (6.5,0) to [out = 0, in = 180] (8.5,-1);
\draw[->] (6.5,-3) to [out = 0, in = 180] (8.5,-1.5);
\draw[->] (6.5,-3) to [out = 0, in = 180] (8.5,-1.5);
\draw[->] (6.5,-6) to [out = 0, in = 180] (8.5,-4);
\node at (7.5, 0) {$T$};
\node at (7.5, -3.5) {$\tilde{T}$};


\draw[->] (-0.5,-0.8 -2.25) to [out= 180, in = 180] (-0.5,-0.8 - 5.25);
\node at (-1.6, -0.8 - 3.75) {\rotatebox{90}{VMC (Section~\ref{section:tensor_train_regression})}};

\node at (0.8, -4.3 -3) {\scriptsize $\hat{f}_0^{\ell,\mathrm{TT},N_\ell}$ from~\eqref{eq:full_discrete_representation}};
\node at (0.7, -4.3 ) {\scriptsize$\hat{f}_0 = \hat{f}_0^\ell$ from~\eqref{eq:local_pert_prior}};

\node at (0.7, -4.3 + 3) {\scriptsize$f_0\circ \Phi^{\ell}$}; 

\node at (5 , -4.3 +3) {\scriptsize $f_0$ from~\eqref{eq:exact_prior}};
\node at (5 , -4.3 +0  
) {\scriptsize $\tilde{f}_0$ from~\eqref{eq:pert_prior_density}};
\node at (5 , -4.3 -3) {\scriptsize $\tilde{f}_0^{\on{Trun},\mathrm{TT}}$ from~\eqref{eq:full_discrete_representation}};

\node at (10 , -4.3 +2) {\scriptsize $f$ from~\eqref{eq:main density}};
\node at (10 , -4.4 -1  
) {\scriptsize $\tilde{f}^{\mathrm{TT}}$ from~\eqref{eq:posterior_apprx}};

\end{tikzpicture}
\caption{Overview of the presented method sketching the different involved transformations and approximations with references to the respective equations.}
\label{fig:overview}
\end{figure*}
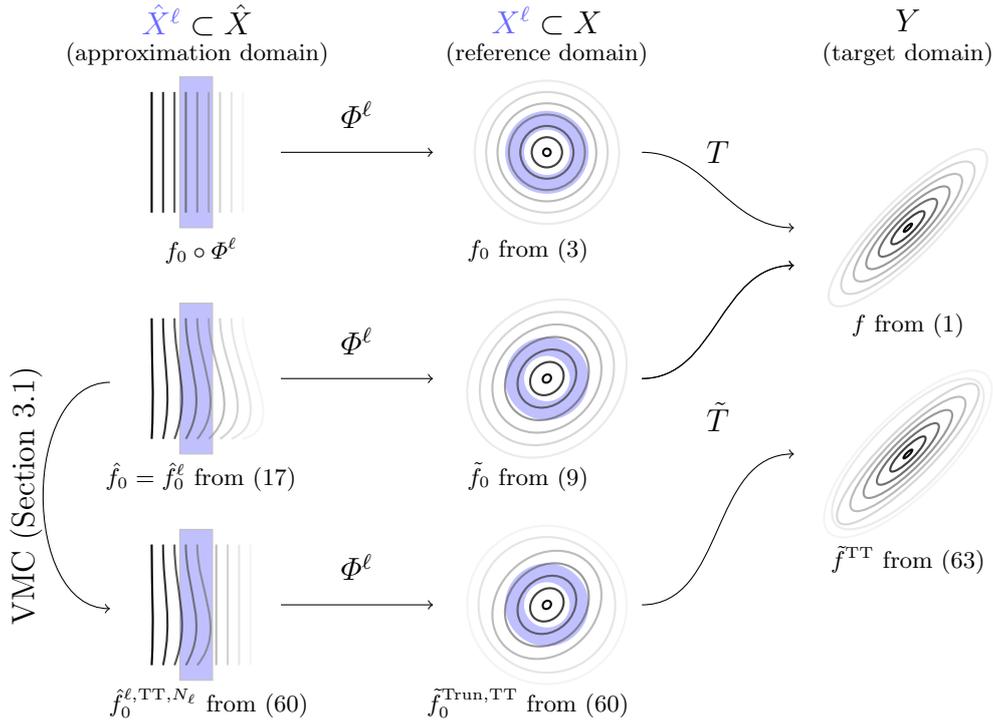

We first consider the relation of a target density $f$ and its perturbed prior $\tilde{f}_0$.
Since the transport $\tilde{T}$ maps $X$ to $Y$, an error functional $\on{d}(Y;\cdot,\cdot)$ has to satisfy 
\begin{equation}
\label{eq:error_equality}
\on{d}\left(Y; f, \tilde{f}^{TT}\right) = \on{d}\left(X; \tilde{f}_0, \tilde{f}_0^{\mathrm{Trun},\mathrm{TT}}\right).
\end{equation}
This property ensures that control of the error of the approximation in terms of the perturbed prior with respect to $\on{d}(X;\cdot,\cdot)$ transfers directly to $f$.
Note that this criterion is canonical as passing to the image space of some measurable function is fundamental in probability theory. 

Prominent measures of discrepancy for two absolutely continuous Lebesgue probability density functions $h_1$ and $h_2$ on some measurable space $Z$ are the Hellinger distance  
\begin{equation}
 \on{d}_{\mathrm{Hell}}(Z, h_1, h_2) = \int\limits_{Z} \left(\sqrt{h_1}(z)-\sqrt{h_2}(z)\right)^2\,\mathrm{d}\lambda(z),
\end{equation}
and the Kullback-Leibler divergence 
\begin{equation}
\label{eq:Def KL}
\on{d}_\mathrm{KL}(Z,h_1,h_2) = \int\limits_{Z} \log\left(\frac{h_1(z)}{h_2(z)}\right) h_1(z)\,\mathrm{d}\lambda(z).
\end{equation}
For the Hellinger distance, the absolute continuity assumption can be dropped from an analytical point of view.
Observe that both $\on{d}_{\mathrm{Hell}}$ and $\on{d}_\mathrm{KL}$ both satisfy~\eqref{eq:error_equality}. 
\begin{lemma}
\label{lem:dX_dY}
Let $\sharp\in\{\mathrm{Hell},\mathrm{KL}\}$, then it holds
\begin{equation}
\label{eq:sharp_f_f0tilde}
\on{d}_{\sharp}(Y; f, \tilde{f}^{TT} )
= \on{d}_{\sharp}(X; \tilde{f}_0, \tilde{f}_0^{\mathrm{Trun},\mathrm{TT}} ).
\end{equation}
\end{lemma}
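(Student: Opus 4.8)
The plan is to prove the identity by a direct change of variables. The key observation is that $\tilde T\colon X\to Y$ is a diffeomorphism (or, under the weakened hypotheses, a sufficiently regular bi-Lipschitz map), so the standard transformation formula for integrals applies, and by definition~\eqref{eq:posterior_apprx} we have $\tilde f^{TT} = (\tilde f_0^{\on{Trun},\mathrm{TT}}\circ\tilde T^{-1})\cdot|\mathcal J_{\tilde T^{-1}}|$ on $Y$. This is precisely the relation between a density on $X$ and its push-forward under $\tilde T$, mirroring~\eqref{eq:exact_prior} and~\eqref{eq:pert_prior_density}. Hence the whole statement reduces to the following abstract fact: if $g_1,g_2$ are two (not necessarily normalized) densities on $X$ and $g_i^Y := (g_i\circ\tilde T^{-1})\cdot|\mathcal J_{\tilde T^{-1}}|$ are their push-forwards to $Y$, then $\on{d}_\sharp(Y;g_1^Y,g_2^Y) = \on{d}_\sharp(X;g_1,g_2)$ for $\sharp\in\{\mathrm{Hell},\mathrm{KL}\}$.

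First I would substitute $z = \tilde T(x)$, $\mathrm d\lambda(z) = |\det\mathcal J_{\tilde T}(x)|\,\mathrm d\lambda(x)$, in the defining integral on $Y$. For the Hellinger distance, the integrand $\bigl(\sqrt{g_1^Y(z)}-\sqrt{g_2^Y(z)}\bigr)^2$ becomes $\bigl(\sqrt{g_1(x)}-\sqrt{g_2(x)}\bigr)^2\,|\det\mathcal J_{\tilde T^{-1}}(\tilde T(x))|$, and multiplying by the Jacobian factor $|\det\mathcal J_{\tilde T}(x)|$ from the change of variables cancels it exactly (using $\mathcal J_{\tilde T^{-1}}(\tilde T(x)) = \mathcal J_{\tilde T}(x)^{-1}$, so the absolute values of the determinants are reciprocal). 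For the Kullback--Leibler divergence, the crucial point is that the ratio $g_1^Y(z)/g_2^Y(z) = g_1(x)/g_2(x)$ because the Jacobian factors cancel inside the logarithm, while the remaining weight $g_1^Y(z)\,\mathrm d\lambda(z) = g_1(x)\,\mathrm d\lambda(x)$ is just the statement that push-forward preserves the measure. In both cases one lands exactly on the integral defining $\on{d}_\sharp(X;g_1,g_2)$.

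Then I would apply this with $g_1 = \tilde f_0$ and $g_2 = \tilde f_0^{\on{Trun},\mathrm{TT}}$, noting that $\tilde f^{TT} = g_2^Y$ by~\eqref{eq:posterior_apprx} and $f = g_1^Y$ by~\eqref{eq:pert_prior_density} (i.e. $\tilde f_0$ is by construction the pull-back of $f$ under $\tilde T$, equivalently $f$ is the push-forward of $\tilde f_0$). This yields~\eqref{eq:sharp_f_f0tilde} directly. I expect the main (and only genuine) obstacle to be a bookkeeping one: ensuring the regularity of $\tilde T$ is enough to justify the change of variables — in particular that $\tilde T$ is invertible with a.e. nonvanishing Jacobian, which is guaranteed by the diffeomorphism assumption in Section~\ref{sec:inexact_transport} (or the weakened local-Lipschitz variant mentioned in the footnote to~\eqref{eq:exact_transport}) — and handling the KL case on sets where $g_2$ may vanish, which is excluded here since $\tilde f_0^{\on{Trun},\mathrm{TT}}$ is built from the strictly positive model class $\mathcal M(\underline c_\ell,\overline c_\ell)$ with $\underline c_\ell>0$ on each layer and a Gaussian tail, so absolute continuity of $\tilde f_0$ with respect to $\tilde f_0^{\on{Trun},\mathrm{TT}}$ holds and the logarithm is well defined. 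For the Hellinger distance no such positivity is needed, consistent with the remark preceding the lemma.
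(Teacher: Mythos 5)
Your proposal is correct and follows essentially the same route as the paper: the change of variables $y=\tilde T(x)$ under the diffeomorphism $\tilde T$, with the Jacobian factors cancelling inside the logarithm for KL (and cancelling against the volume element for Hellinger), is exactly the paper's argument. The extra care you take regarding invertibility of $\tilde T$ and positivity of $\tilde f_0^{\on{Trun},\mathrm{TT}}$ is a welcome refinement but does not change the substance.
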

\begin{proof}
	We only show~\eqref{eq:sharp_f_f0tilde} for $\sharp=\mathrm{KL}$ since $\sharp=\mathrm{Hell}$ follows by similar arguments. 
By definition
\begin{equation}
	\on{d}_{\mathrm{KL}}(Y; f, \tilde{f}^{\mathrm{TT}} )
	= \int\limits_{Y} \log\left(\frac{f(y)}{\tilde{f}^{\mathrm{TT}}(y)}\right)f(y)\,\mathrm{d}\lambda(y),
\end{equation}
and the introduction of the transport map $\tilde{T}$ yields the claim
\begin{align}
	&\phantom{=}\int\limits_{X} \log\left(
	\frac{f\circ \tilde{T}(x)}{\tilde{f}^{\mathrm{TT}}\circ \tilde{T}(x)}
	\cdot
	\frac{|\operatorname{det}\mathcal{J}_{\tilde{T}}(x)|}{|\operatorname{det}\mathcal{J}_{\tilde{T}}(x)|}
	\right)\tilde{f}_0(x)\,\mathrm{d}\lambda(x) \nonumber \\
	&= \on{d}_{\mathrm{KL}}(X; \tilde{f}_0, \tilde{f}_0^{\mathrm{Trun},\mathrm{TT}} ).
\end{align}
\end{proof}

With the previous results and notations, the following assumption turns out to be required for the convergence result.
\begin{assumption}
\label{ass:theorems}
For a target density $f\colon Y\to\mathbb{R}_+$ and a transport map $\tilde{T}\colon X \to Y$, there exists a simply connected compact domain $K$ such that $\tilde{f}_0=(f\circ T)\otimes|\operatorname{det}\mathcal{J}_T|\in L^2(K)$ 
 has outer polynomial exponential decay with polynomial $\pi^+$ on $X\setminus K$.
 Consider the symmetric positive definite matrix $\Sigma\in\bbR^{d, d}$ and $\mu\in\bbR^d$ as the covariance and mean for the outer approximation $f_{\Sigma, \mu}$.
Furthermore, let $K=\bigcup_{\ell=1}^L \overline{X^\ell}$ with $X^\ell$ being the image of a rank-1 stable diffeomorphism $\Phi^\ell\colon \hat{X}^\ell\to X^\ell$ for every $\ell = 1,\ldots,L$. 
\end{assumption}

We can now formulate the main theorem of this section regarding the convergence of the developed approximation with respect to the Hellinger distance and the KL divergence.

\begin{theorem} (\textbf{A priori convergence})
\label{thm:Hell}
Let Assumption~\ref{ass:theorems} hold and let a sequence of sample sizes $(N^\ell)_{\ell=1}^L\subset \mathbb{N}$ be given.
For every $\ell =1,\ldots,L$, consider bounds $0<\underline{c}^\ell<\overline{c}^\ell<\infty$ and let $\tilde{f}^{\mathrm{TT}}$ be defined as in~\eqref{eq:posterior_apprx}. 
Then there exist constants $C,C_\Sigma,C^\ell,C_\iota^\ell >0$, $\ell=1,\ldots,L$, such that for
$\sharp\in\{\mathrm{KL},\mathrm{Hell}\}$ 
\begin{align}
\on{d}_{\sharp}(Y,f,\tilde{f}^{\mathrm{TT}}) &\leq
 C\left(\sum\limits_{\ell=1}^L \left( \mathcal{E}_{\mathrm{best}}^\ell + \mathcal{E}_{\mathrm{sing}}^\ell + \mathcal{E}_{\mathrm{gen}}^\ell \right) + \mathcal{E}_{\mathrm{trun}}^\sharp\right).
 \label{eq:sharp-est}
\end{align}
Here, $\mathcal{E}_{\mathrm{best}}^\ell$ denotes the error of the best approximation $v_\Lambda^\ell$ to $\hat{f}_0^\ell$ in the full truncated polynomial space $\mathcal{V}_{\Lambda}^\ell(\underline{c}^\ell,\overline{c}^\ell) =\mathcal{V}_{\Lambda}^\ell\cap \mathcal{V}(\hat{X}^\ell,\underline{c}^\ell,\overline{c}^\ell)$ given by
\begin{equation*}
 \mathcal{E}_{\mathrm{best}}^\ell:=
\|\hat{f}_0^\ell - v_\Lambda^\ell\|_{\mathcal{V}(\hat{X}^\ell)}
=\inf\limits_{v^\ell\in \mathcal{V}_{\Lambda}^\ell(\underline{c}^\ell,\overline{c}^\ell)}
\| \hat{f}_0^\ell - v^\ell \|_{\mathcal{V}(\hat{X}^\ell)},
\end{equation*}
$\mathcal{E}_{\mathrm{sing}}^\ell$ is the low-rank approximation error of the algebraic tensor associated to $v_\Lambda^\ell$ and the truncation error $\mathcal{E}_{\mathrm{trun}}$ is given by
\begin{align*}
\left(\mathcal{E}_{\mathrm{trun}}^\mathrm{Hell}\right)^2 &:= 
     \norm{\exp{(-\pi^+)}}_{L^1(X\setminus K)} + \Gamma\left(d/2, C_\Sigma R^2\right),\\
\mathcal{E}_{\mathrm{trun}}^\mathrm{KL} &:= 
     \int\limits_{X\setminus K} \left(\frac{1}{2}\|x\|_{\Sigma^{-1}}^2 + \tilde{\pi}^+(x)\right)e^{-\tilde{\pi}^+(x)}\,\mathrm{d}\lambda(x).
\end{align*}
Furthermore, for any $(\epsilon^\ell)_{\ell=1}^L \subset \mathbb{R}_+$ the generalization errors $\mathcal{E}_{\mathrm{gen}}^\ell$ can be bounded in probability
\begin{equation*}
\mathbb{P}(\mathcal{E}_{\mathrm{gen}}^\ell > \epsilon^\ell) \leq 2\nu(\mathcal{M}^\ell, C^\ell \epsilon^\ell)\delta^\ell(1/4\epsilon^\ell, N^\ell)
\end{equation*}
with $\nu$ denoting the covering number from Definition~\ref{def:covering_number} and 
$\delta^\ell(\epsilon,N)\leq 2\exp(-2\epsilon^2N/{C_\iota^\ell})$.
\end{theorem}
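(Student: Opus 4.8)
\emph{Proof strategy.} The first step is to transfer the estimate from the target space $Y$ to the reference space $X$: by Lemma~\ref{lem:dX_dY} it suffices to bound $\on{d}_\sharp(X;\tilde f_0,\tilde f_0^{\on{Trun},\mathrm{TT}})$ for $\sharp\in\{\mathrm{Hell},\mathrm{KL}\}$. Since $\tilde f_0^{\on{Trun},\mathrm{TT}}$ is piecewise defined over the layers $X^1,\dots,X^{L+1}=X\setminus K$, with the Gaussian $f_{\Sigma,\mu}$ on the remainder (see~\eqref{eq:full_discrete_representation}), I would split the integral defining $\on{d}_\sharp(X;\cdot,\cdot)$ additively into the $L+1$ layer contributions. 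On the remainder layer $X^{L+1}$ the contribution is exactly the quantity estimated in Lemma~\ref{proposition:truncation}, which yields the term $\mathcal E_{\mathrm{trun}}^\sharp$ (for $\sharp=\mathrm{Hell}$ after absorbing the harmless discrepancy that $(\mathcal E_{\mathrm{trun}}^{\mathrm{Hell}})^2$, rather than $\mathcal E_{\mathrm{trun}}^{\mathrm{Hell}}$, literally appears --- permissible since all error quantities may be taken $\le 1$). On each interior layer $\ell=1,\dots,L$ the contribution is dominated by an $L^1$-distance of the two densities restricted to $X^\ell$: for $\sharp=\mathrm{Hell}$ via $(\sqrt a-\sqrt b)^2\le|a-b|$, and for $\sharp=\mathrm{KL}$ --- noting first that $\tilde f_0^{\on{Trun},\mathrm{TT}}>0$ so that $\tilde f_0\ll\tilde f_0^{\on{Trun},\mathrm{TT}}$ --- via $|\log a-\log b|\le(\underline c^\ell)^{-1}|a-b|$ on the set where both densities lie in $[\underline c^\ell,\overline c^\ell]$ together with $\tilde f_0\le\overline c^\ell$, so that the contribution of layer $\ell$ is at most $\tfrac{\overline c^\ell}{\underline c^\ell}\|\tilde f_0^\ell-\tilde f_0^{\ell,\mathrm{TT}}\|_{L^1(X^\ell)}$.

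Next I would pull each interior layer back to the tensor domain $\hat X^\ell$ via the rank-1 stable diffeomorphism $\Phi^\ell$ of Assumption~\ref{ass:theorems}. Rank-1 stability makes $w_\ell=|\det\mathcal J_{\Phi^\ell}|$ tensorized (so $\mathcal V(\hat X^\ell)$ is a genuine tensor-product space carrying the orthonormal basis used for the reconstruction), and the change of variables is an $L^2$-isometry, $\int_{X^\ell}|g|^2\,\mathrm{d}\lambda=\|g\circ\Phi^\ell\|_{\mathcal V(\hat X^\ell)}^2$. Using $\tilde f_0\in L^2(K)$ (Assumption~\ref{ass:theorems}) and Cauchy--Schwarz on the bounded domain $X^\ell$, the layer-$\ell$ contribution is bounded by $C\|\hat f_0^\ell-\hat f_0^{\ell,\mathrm{TT}}\|_{\mathcal V(\hat X^\ell)}$, with $\hat f_0^{\ell,\mathrm{TT}}$ the VMC reconstruction from~\eqref{eq:layer_minimization}. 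It then remains to estimate this reconstruction error by $\mathcal E_{\mathrm{best}}^\ell+\mathcal E_{\mathrm{sing}}^\ell+\mathcal E_{\mathrm{gen}}^\ell$. Inserting the full-tensor best approximant $v_\Lambda^\ell\in\mathcal V_\Lambda^\ell(\underline c^\ell,\overline c^\ell)$ and its HOSVD low-rank truncation $v_{\bm r}^\ell$ and using the triangle inequality, the reconstruction error is bounded by $\|\hat f_0^\ell-v_\Lambda^\ell\|_{\mathcal V(\hat X^\ell)}$, plus $\|v_\Lambda^\ell-v_{\bm r}^\ell\|_{\mathcal V(\hat X^\ell)}$, plus $\|v_{\bm r}^\ell-\hat f_0^{\ell,\mathrm{TT}}\|_{\mathcal V(\hat X^\ell)}$. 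The first is $\mathcal E_{\mathrm{best}}^\ell$ by definition; the second is $\mathcal E_{\mathrm{sing}}^\ell$ by Lemma~\ref{lem:HOSVD}; the third is handled by the VMC analysis of Section~\ref{section:tensor_train_regression}, i.e.\ the splitting~\eqref{eq:vmc_error_bound_app_gen} $\mathcal E^\ell\le\mathcal E_{\mathrm{app}}^\ell+\mathcal E_{\mathrm{gen}}^\ell$, the quasi-best bound~\eqref{eq:quasi best approximation error} --- valid since $0<\underline c^\ell<\overline c^\ell<\infty$ makes Lemmas~\ref{lem:KL_comp} and~\ref{lem:quadratic_loss} applicable --- which controls $\mathcal E_{\mathrm{app}}^\ell$ by $C\inf_{v\in\mathcal M^\ell}\|\hat f_0^\ell-v\|_{\mathcal V(\hat X^\ell)}^2\le C(\mathcal E_{\mathrm{best}}^\ell+\mathcal E_{\mathrm{sing}}^\ell)^2$ (using $v_{\bm r}^\ell$ as competitor), and Lemma~\ref{lem:gen_error} applied on $\hat X^\ell$ with model class $\mathcal M^\ell$ for the probabilistic bound on $\mathcal E_{\mathrm{gen}}^\ell$ with layerwise constants $C^\ell,C_\iota^\ell$. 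Finally, the mismatch of normalization constants is harmless: $|C_L^{\mathrm{TT}}-C_L|=C_LC_L^{\mathrm{TT}}|C_L^{>,\mathrm{TT}}-C_L^{>}|$ and $|C_L^{>,\mathrm{TT}}-C_L^{>}|\le\sum_\ell\|\tilde f_0^{\ell,\mathrm{TT}}-\tilde f_0^\ell\|_{L^1(X^\ell)}$, so it is already controlled by the sum of the layer errors and can be absorbed into $C$. Collecting the layer contributions, and using monotonicity of $x\mapsto x^p$ on $[0,1]$ to reconcile the various exponents, gives~\eqref{eq:sharp-est}.

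I expect the main obstacle to be making the HOSVD truncation compatible with the VMC framework: the low-rank truncation $v_{\bm r}^\ell$ of the full-tensor approximant need not satisfy the pointwise bounds $[\underline c^\ell,\overline c^\ell]$ defining the model class $\mathcal M^\ell$, so to use it legitimately as a competitor in $\inf_{v\in\mathcal M^\ell}$ one has to either choose the layerwise bounds strictly larger/smaller than the actual range of $\hat f_0^\ell$ (so that small perturbations remain admissible) or insert a clipping/projection step whose error is still controlled by $\mathcal E_{\mathrm{sing}}^\ell$. A related subtlety is the bookkeeping of exponents noted above --- the Hellinger distance is quadratically, while the KL cost functional is essentially linearly, related to the $\mathcal V(\hat X^\ell)$-error --- and the accompanying need to keep the constants in~\eqref{eq:quasi best approximation error} and Lemma~\ref{lem:gen_error} either uniform in $\ell$ or explicitly tracked, so that a single global $C$ together with the per-layer $C^\ell,C_\iota^\ell$ and $C_\Sigma$ suffices in~\eqref{eq:sharp-est}.
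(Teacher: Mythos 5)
Your proposal follows essentially the same route as the paper's proof: transfer to $X$ via Lemma~\ref{lem:dX_dY}, split over the layers with the remainder handled by Lemma~\ref{proposition:truncation}, reduce each interior layer to an $L^1$- and then (by Cauchy--Schwarz on the bounded $X^\ell$ and the pullback isometry) to a $\mathcal{V}(\hat{X}^\ell)$-error, and conclude with the triangle inequality, Lemma~\ref{lem:HOSVD}, the splitting~\eqref{eq:vmc_error_bound_app_gen} and Lemma~\ref{lem:gen_error}. In fact you are somewhat more careful than the paper, which silently omits the normalization-constant mismatch and the admissibility of the HOSVD truncate in the constrained model class that you correctly flag.
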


\begin{proof}
	We first prove~\eqref{eq:sharp-est} for $\sharp=\mathrm{Hell}$ and point out that the Hellinger distance can be bounded by the $L^2$ norm.  
	Note that $|\sqrt{a}-\sqrt{b}| \leq \sqrt{|a-b|}$ for $a,b\geq 0$ and with Lemma~\ref{lem:dX_dY} it holds
	\begin{align*}
	\on{d}_{\mathrm{Hell}}(Y; f,\tilde{f}^{\mathrm{TT}})&=
	\on{d}_{\mathrm{Hell}}(X; \tilde{f}_0, \tilde{f}_0^{\mathrm{Trun},\mathrm{TT}} ) \\
	&\leq
	\|\tilde{f}_0 -\tilde{f}_0^{\mathrm{Trun},\mathrm{TT}}\|_{L^1(K)} \\
        &\quad
	+\|\tilde{f}_0 -\tilde{f}_0^{\mathrm{Trun},\mathrm{TT}}\|_{L^1(X\setminus K)}.
	\end{align*}
	Since $K = \cup_{\ell=1}^L X^\ell$ and $X^\ell$ are bounded, there exist constants $C(X^\ell)>0$, $\ell=1,\ldots,L$, such that
	\begin{align*}
	\|\tilde{f}_0 -\tilde{f}_0^{\mathrm{Trun},\mathrm{TT}}\|_{L^1(K)} &=
	\sum\limits_{\ell=1}^L\|\tilde{f}_0 -\tilde{f}_0^{\mathrm{Trun},\mathrm{TT}}\|_{L^1(X^\ell)}\\
	&\leq
	\sum\limits_{\ell=1}^L  C(X_\ell)
	\|\tilde{f}_0 -\tilde{f}_0^{\mathrm{Trun},\mathrm{TT}}\|_{L^2(X^\ell)}.
	\end{align*}
	Moreover, by construction
	\begin{equation}
	\label{eq:proof_Xell_VhatXell}
	\|\tilde{f}_0 -\tilde{f}_0^{\mathrm{Trun},\mathrm{TT}}\|_{L^2(X^\ell)}
	= 
	\|\hat{f}_0^\ell -\hat{f}_0^{\ell,\mathrm{TT},N_\ell} \|_{\mathcal{V}(\hat{X}^\ell)}.
	\end{equation}
	The claim follows by application of Lemmas~\ref{proposition:truncation},~\ref{lem:HOSVD} and~\ref{lem:gen_error} together with~\eqref{eq:vmc_error_bound_app_gen}. 
	
	To show~\eqref{eq:sharp-est} for $\sharp=\mathrm{Hell}$, note that by Lemma~\ref{lem:dX_dY} and the construction~\eqref{eq:full_discrete_representation} it holds
	\begin{align}
	\on{d}_{\mathrm{KL}}(Y; f,\tilde{f}^{\mathrm{TT}}) &= \sum_{\ell=1}^L \int_{X^\ell}\log\frac{\tilde{f}_0}{\tilde{f}_0^{\ell, \mathrm{TT}}} \tilde{f}_0\mathrm{d}\lambda(x) \nonumber \\
&\quad + \int_{X\setminus K} \log\frac{\tilde{f}_0}{f_{\Sigma, \mu}} \tilde{f}_0\mathrm{d}\lambda(x).
	\end{align}
	Using Lemma~\ref{proposition:truncation} we can bound the integral over $X\setminus K$ by the truncation error $\mathcal{E}_{\mathrm{trun}}$.
	Employing the loss function and cost functional of Lemma~\ref{lem:KL_comp} yields
	\begin{equation}
	\int_{X^\ell}\log\frac{\tilde{f}_0}{\tilde{f}_0^{\ell, \mathrm{TT}}} \tilde{f}_0\mathrm{d}\lambda(x) \leq \mathcal{E}_{\mathrm{app}}^\ell + \mathcal{E}_{\mathrm{gen}}^\ell.
	\end{equation}
	The claim follows by application of Lemmas~\ref{lem:HOSVD} and~\ref{lem:gen_error}
	together with~\eqref{eq:vmc_error_bound_app_gen}. 
\end{proof}

\subsection{Polynomial approximation in weighted $L^2$ spaces}
\label{sec:polynomial best approximation}
In order to make the error bound~\eqref{eq:sharp-est} in Theorem~\ref{thm:Hell} more explicit with respect to $\mathcal{E}_\textrm{best}$, we consider the case of a smooth density function with analytic extension.
The analysis follows the presentation in~\cite{babuvska2010stochastic} and leads to exponential convergence rates by an iterative interpolation argument based on univariate best approximation bounds by interpolation.
An analogous analysis for more general regularity classes is possible but not in the scope of this article.

Let $\hat{X} = \bigotimes_{i=1}^d \hat{X}_i\subset\mathbb{R}^d$ be bounded and $w=\otimes_{i=1}^d w_i\in L^\infty(\hat{X})$ a non-negative weight such that $\mathcal{C}(\hat{X})\subset\mathcal{V}:=L^2(\hat{X},w) = \bigotimes_{i=1}^d L^2(\hat{X}_i,w_i)$.

For a Hilbert space $H$, a bounded set $I\subset\mathbb{R}$ and a function $f\in \mathcal{C}(I;H)\subset L^2(I,w;H)$ with weight $w\colon I\to\mathbb{R}$, let $\mathcal{I}_n \colon \mathcal{C}(I;H) \to L^2(I,w;H)$ defined as
\begin{align*}
\mathcal{I}_n f(\cdot) = \sum\limits_{k=1}^{n+1} f(\hat{x}_k) \ell_k(\cdot),
\end{align*}
denote the continuous Lagrange interpolation operator.
The $\ell_k$ are polynomials of degree $k$ orthogonal in $L^2(I,w)$ and $(\hat{x}_k)_{k=1}^n$ are the roots, respectively.

Assume that $f\in \mathcal{C}(I;H)$ admits an analytic extension in the region of the complex plane $\Sigma(I;\tau) := \{z\in\mathbb{C} | \operatorname{dist}(z,I)\leq \tau\}$ for some $\tau > 0$.
Then, referring to~\cite{babuvska2010stochastic},
\begin{equation}
\|f - \mathcal{I}_nf\|_{L^2(I,w;H)} \lesssim \sigma(n,\tau) \max\limits_{z\in\Sigma(I;\tau)}\|f(z)\|_H,
\label{eq:analytic_decay}
\end{equation}
with $ \sigma(n,\tau):=2(\rho-1)^{-1} \exp{(-n\log(\rho))}$ and $\rho := 2\tau/|I| + \sqrt{1 + 4\tau^2/|I|^2}>1$.
By using an iterative argument over $d$ dimensions, a convergence rate for the interpolation of $f\in\mathcal{C}(\hat{X};\mathbb{R})\subset L^2(\hat{X},w;\mathbb{R})$ can be derived from the one dimensional convergence.
More specifically, let $\mathcal{I}_\Lambda:\mathcal{C}(\hat{X})\mapsto L^2(\hat{X},w)$ denote the continuous interpolation operator written as composition of a $1$-dimensional and a $d-1$-dimensional interpolation $\mathcal{I}_\Lambda := \mathcal{I}_{n_1}^1\circ\mathcal{I}_{n_2:n_d}^{2:d}$ with continuous $\mathcal{I}_{n_1}^1\colon \mathcal{C}(\hat{X}_1)\to L^2(\bigtimes_{i=2}^d\hat{X}_i, \otimes_{i=2}^d w_i)$ and $\mathcal{I}_{n_2,\ldots,n_d}^{2,\ldots,d}\colon \mathcal{C}(\bigtimes_{i=2}^d \hat{X}_i)\to H$ with $ H= L^2(\bigtimes_{i=2}^d\hat{X}_i,\otimes_{i=2}^dw_i)$.
Then, for $f\in\mathcal{C}(\hat{X})$ and some $C>0$ it follows
\begin{align*}
 \| f - \mathcal{I}_\Lambda f\| &\leq \| f - \mathcal{I}_{n_1}^1 f\| + \| \mathcal{I}_{n_1}^1(f-\mathcal{I}_{n_2,\ldots,n_d}^{2,\ldots,d}f)\|\\
&\lesssim
\| f - \mathcal{I}_{n_1}^1 f\| + \\
&
\sup\limits_{\hat{x_1}\in \hat{X}_1}\| f(x_1)-\mathcal{I}_{n_2,\ldots,n_d}^{2,\ldots,d}f(x_1)\|_{ H}. 
\end{align*}
The second term of the last bound is a $d-1$-dimensional interpolation and can hence be bounded uniformly over $\hat{x}_1$ by a similar iterative argument.
We summarize the convergence result for $\mathcal{E}_{\mathrm{best}}^\ell$ in the spirit of~\cite[Theorem 4.1]{babuvska2010stochastic}.

\begin{lemma}
\label{lem:decay_rate}
Let $\hat{f}\in \mathcal{C}(\hat{X}^\ell)\subset L^2(\hat{X}^\ell,w)$ admit an analytic extension in the region $$\Sigma(\hat{X}^\ell, (\tau_i^\ell)_{i=1}^d) = \bigtimes_{i=1}^d \Sigma(\hat{X}_i^\ell,\tau_i^\ell)$$ for some $\tau_i^\ell>0$, $\ell=1,\ldots,L$, $i=1,\ldots,d$.
Then, with $\sigma$ from~\eqref{eq:analytic_decay},
$$
\inf\limits_{v\in \mathcal{V}_{\Lambda}} \| \hat{f} - v\|_{L^2(\hat{X}^\ell,w)} \lesssim \sum\limits_{i=1}^d \sigma(n_i,\tau_i).
$$
\end{lemma}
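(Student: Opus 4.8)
The plan is to replace the infimum over $\mathcal{V}_\Lambda$ by the error of the tensorized Lagrange interpolant $\mathcal{I}_\Lambda\hat{f}=\mathcal{I}_{n_1}^1\circ\mathcal{I}_{n_2,\ldots,n_d}^{2,\ldots,d}\hat{f}$ introduced above, which lies in $\mathcal{V}_\Lambda$ by construction, so that $\inf_{v\in\mathcal{V}_\Lambda}\|\hat{f}-v\|_{L^2(\hat{X}^\ell,w)}\le\|\hat{f}-\mathcal{I}_\Lambda\hat{f}\|_{L^2(\hat{X}^\ell,w)}$, and then to estimate the right-hand side by induction on the dimension $d$. The base case $d=1$ is precisely the univariate bound~\eqref{eq:analytic_decay} with $H=\mathbb{R}$, which applies since $\hat{f}$ extends analytically to $\Sigma(\hat{X}_1^\ell,\tau_1^\ell)$ and is bounded there by continuity on the compact closure.

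For the induction step I would use the splitting already displayed just before the lemma, $\|\hat{f}-\mathcal{I}_\Lambda\hat{f}\|\le\|\hat{f}-\mathcal{I}_{n_1}^1\hat{f}\|+\|\mathcal{I}_{n_1}^1(\hat{f}-\mathcal{I}_{n_2,\ldots,n_d}^{2,\ldots,d}\hat{f})\|$. Viewing $\hat{f}$ as an element of $\mathcal{C}(\hat{X}_1^\ell;H)$ with $H=L^2(\bigtimes_{i=2}^d\hat{X}_i^\ell,\otimes_{i=2}^d w_i)$, the product structure of $\Sigma(\hat{X}^\ell,(\tau_i^\ell)_i)$ shows that $x_1\mapsto\hat{f}(x_1,\cdot)$ is an $H$-valued analytic map on $\Sigma(\hat{X}_1^\ell,\tau_1^\ell)$ with $\max_{z\in\Sigma(\hat{X}_1^\ell,\tau_1^\ell)}\|\hat{f}(z,\cdot)\|_H<\infty$, so that~\eqref{eq:analytic_decay} gives $\|\hat{f}-\mathcal{I}_{n_1}^1\hat{f}\|\lesssim\sigma(n_1,\tau_1^\ell)$. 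For the second term I would invoke boundedness of $\mathcal{I}_{n_1}^1\colon\mathcal{C}(\hat{X}_1^\ell)\to H$ (its norm controlled by the Lebesgue constant of the interpolation nodes), whence $\|\mathcal{I}_{n_1}^1(\hat{f}-\mathcal{I}_{n_2,\ldots,n_d}^{2,\ldots,d}\hat{f})\|\lesssim\sup_{x_1\in\hat{X}_1^\ell}\|\hat{f}(x_1,\cdot)-\mathcal{I}_{n_2,\ldots,n_d}^{2,\ldots,d}\hat{f}(x_1,\cdot)\|_H$. For each fixed $x_1$ the slice $\hat{f}(x_1,\cdot)$ is analytic on $\bigtimes_{i=2}^d\Sigma(\hat{X}_i^\ell,\tau_i^\ell)$ and uniformly bounded in $x_1$ on the closure, so the induction hypothesis in dimension $d-1$ yields $\sup_{x_1}\|\hat{f}(x_1,\cdot)-\mathcal{I}_{n_2,\ldots,n_d}^{2,\ldots,d}\hat{f}(x_1,\cdot)\|_H\lesssim\sum_{i=2}^d\sigma(n_i,\tau_i^\ell)$. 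Adding the two contributions closes the induction and gives $\|\hat{f}-\mathcal{I}_\Lambda\hat{f}\|\lesssim\sum_{i=1}^d\sigma(n_i,\tau_i^\ell)$.

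The main obstacle is not the analyticity argument but the uniform bookkeeping: one must ensure that the constants absorbed into $\lesssim$ — in particular the Lebesgue constants $\|\mathcal{I}_{n_i}^i\|$ — do not accumulate uncontrollably over the $d$ iterations, which is why the statement is content with a (moderate) dimension-dependent constant rather than an explicit one. A minor additional point is the routine verification that fixing real values of a subset of the variables in a function analytic on the product polyellipse $\Sigma(\hat{X}^\ell,(\tau_i^\ell)_i)$ leaves an $H$-valued analytic function whose supremum over the remaining polyellipse is bounded uniformly in the fixed variables; this follows from compactness of $\overline{\Sigma(\hat{X}^\ell,(\tau_i^\ell)_i)}$ together with continuity of $\hat{f}$ there.
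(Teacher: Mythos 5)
Your proposal is correct and follows essentially the same route as the paper: the infimum is bounded by the error of the tensorized interpolant $\mathcal{I}_\Lambda=\mathcal{I}_{n_1}^1\circ\mathcal{I}_{n_2,\ldots,n_d}^{2,\ldots,d}$, the triangle-inequality splitting combined with the univariate analytic bound~\eqref{eq:analytic_decay} handles the first coordinate, and the remaining $(d-1)$-dimensional error is treated uniformly in $\hat{x}_1$ by iterating the argument. Your additional remarks on the analyticity of slices, uniform boundedness on the compact closure, and the role of the Lebesgue constants are exactly the details the paper leaves implicit.
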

In case that $\underline{c}\leq f(\hat{x}), v^\ast(\hat{x}) \leq \overline{c}$ is satisfied for $v^\ast:= \argmin_{v\in\mathcal{V}_\Lambda}\|f - v\|_{L^2(\hat{X}^\ell,w)}$, the decay rate carries over onto the space $\mathcal{V}_\Lambda^\ell(\underline{c}^\ell,\overline{c}^\ell)$.
If only $\underline{c}\leq f(\hat{x})\leq \overline{c}$ holds, the image of $v^\ast$ can be restricted to $[\underline{c},\overline{c}]$, see e.g.~\cite{optimalweighted17}.
This approximation in fact admits a smaller error than $v^\ast$.

\begin{remark}
The interpolation argument on polynomial discrete spaces could be expanded to other orthonormal systems such as trigonometric polynomial, admitting well-known Lebesque constants as in~\cite{da2013lebesgue}.
\end{remark}

\begin{remark}
Explicit best approximation bounds for appropriate smooth weights $w$, as in the case of spherical coordinates, can be obtained using partial integration techniques as in~\cite{mead1973convergence}.
There the regularity class of $f$ is based on high-order weighted Sobolev spaces based on derivatives of $w$ as in the case of classical polynomials.
\end{remark}

\section{Algorithm}
\label{sec:Algorithm}
Since a variety of techniques are employed in the density discretization, this section provides an exemplary algorithmic workflow to illustrate the required steps in practical applications (see also Figure~\ref{fig:affine density transport} for a sketch of the components of the method).
The general method to obtain a representation of the density~\eqref{eq:main density} by its auxiliary reference~\eqref{eq:pert_prior_density} is summarized in Algorithm~\ref{alg:algo_levelrecon}.
Based on this, the computation of possible quantities of interest such as moments~\eqref{eq:pert_pi_0} or marginals are considered in Sections~\ref{sec:moments} and~\ref{sec:marginals}, respectively.
In the following we briefly describe the involved algorithmic procedures.

\paragraph{\textbf{Computing the transformation}}
Obtaining a suitable transport map is a current research topic and examined \eg in~\cite{papamakarios2019normalizing,parno2018transport,tran2019discrete,marzouk2016introduction}.
In Section~\ref{sec:transportMaps}, two naive options are introduced.
In the numerical applications, we employ an affine transport and also illustrate the capabilities of a quadratic transport in a two-dimensional example.
For the affine linear transport we utilize a semi-Newton optimizer to obtain the maximum value of $f$ and an approximation of the Hessian at the optimal value, see Section~\ref{sec:affine transport}. 
For the construction of a quadratic transport we rely on the library \verb#TransportMaps# \cite{TM}.
We summarize the task to provide the (possibly inexact) transport map in the function 
\begin{equation}
  \label{eq:ComputeTrafo}
  \tilde{T} \leftarrow \ComputeTrafo[f].
\end{equation}

In the following paragraphs we assume $\Phi^\ell$ to be the multivariate polar transformation as in Example~\ref{ex:polar_coords}, defined on the corresponding hyperspherical shells $\hat{X}^\ell$.
We refer to $\hat{X}^\ell_1$ as the \emph{radial dimension} and $\hat{X}^\ell_{i}$ as the \emph{angular dimensions} for $1 < i \leq d$. 
The computations on each shell $\hat{X}^\ell, \ell=1,\ldots,L$ are fully decoupled and suitable for parallelization. Note that the proposed method is easily adapted to other transformations $\Phi^\ell$.

\paragraph{\textbf{Generating an orthonormal basis}}
To obtain suitable finite dimensional subspaces, one has to introduce spanning sets that allow for an efficient computation of \eg moments~\eqref{eq:transport_moment_equation} and the optimization of the functional~\eqref{eq:lossfunctional}.
Given a fixed dimension vector $\bs{n}^\ell\in\mathbb{N}^d$ for the current $\hat{X}^\ell$, $\ell=1, \ldots, L$, and by the chosen parametrization via $\Phi^\ell$ introducing the weight $w^\ell$, the function
\begin{equation}
  \label{eq:GenerateOnb}
  \mathcal{P}^\ell = \{\mathcal{P}_i^\ell\}_{i=1}^{d} \leftarrow \GenerateONB[\hat{X}^\ell, \bs{n}^\ell, w^\ell, \tau_\on{GS}]
\end{equation}
can be split into three distinct algorithmic parts as follows.

\begin{itemize}
  \item \textit{1st coordinate $\hat{x}_1$}: The computation of an orthonormal polynomial basis $\{P_{1,\alpha}^\ell\}_{\alpha}$ with respect to the weight $w^\ell_1(\hat{x}_1) = \hat{x}_1^{d-1}$ in the radial dimension by a stabilized Gram-Schmidt method.
  This is numerically unstable since the involved summations cause cancellation. 
  As a remedy, we define \emph{arbitrary precision polynomials} with a significant digit length $\tau_{\mathrm{mant}}$ to represent polynomial coefficients.
  By this, point evaluations of the orthonormal polynomials and computations of integrals of the form

\begin{equation}
    \label{eq:arbitraryprecintegral}
    \int_{\hat{X}^\ell_1} \hat{x}_1^{m} P^\ell_{1, \alpha}(\hat{x}_1) \hat{x}_1^{d-1}\mathrm{d}\lambda(\hat{x}_1), \quad m\in\mathbb{N},
\end{equation}
\eg required for computing moments with polynomial transport, can be realized with high precision. 
  The length $\tau_{\mathrm{mant}}$ is set to $100$ in the numerical examples and the additional run-time is negligible as the respective calculations can be precomputed.
  \item \textit{2nd coordinate $\hat{x}_2$}: Since $\hat{X}_2^\ell=[0, 2\pi]$ and to preserve periodicity, we employ trigonometric polynomials given by
  \begin{equation}
    P^\ell_{2, j}(\hat{x}_2) = \left\{
    \begin{array}{ll}
	  \frac{1}{\sqrt{2\pi}}, & j = 1 \\
	  \frac{\sin(\frac{j}{2}\hat{x}_2)}{\sqrt{\pi}}, & j \text{ even} \\
  	  \frac{\cos(\frac{j-1}{2} \hat{x}_2)}{\sqrt{\pi}}, & j>1 \text{ odd}.
    \end{array}
    \right.
  \end{equation}
  Note that here the weight function is constant, \ie \\$w^\ell_2(\hat{x}_2) \equiv 1$, and the defined trigonometric polynomials are orthonormal in $L^2(\hat{X}_2^\ell)$.
  \item \textit{coordinate $\hat{x}_3,\ldots,\hat{x}_d$}: On the remaining angular dimensions $i=3, \ldots, d$, we employ the usual Gram-Schmidt orthogonalization algorithm on $[0, \pi]$ with weight function $w^\ell_i(\hat{x}_i) = \sin^i(\hat{x}_i)$, based on polynomials.
\end{itemize}
Fortunately, the basis for dimensions $1 < i \leq d$ coincides on every layer $\ell=1,\ldots, L$.
It hence can be computed just once and passed to the individual process handling the current layer. 
Only the basis in the radial dimension needs to be adjusted to $\hat{X}^\ell$.
The parameter $\tau_{\mathrm{GS}}$ collects all tolerance parameters for the applied numerical quadrature and the significant digit length $\tau_{\mathrm{mant}}$.

\paragraph{\textbf{Generation of Samples}}
To generate samples on $\hat{X}^\ell$ with respect to the weight function $w^\ell$, we employ inverse transform sampling.
For this the weight function is rescaled to have unit norm in $L^1(\hat{X}^\ell)$.
Then, the involved inverse cumulative distribution functions can be computed analytically.
We denote the generation process of $N\in\mathbb{N}$ samples as the function
\begin{align}
 \mathcal{S}^\ell&\isdef\left\{\left(\hat{x}^s, \hat{f}_0^{\ell}(\hat{x}^s)\right)\right\}_{s=1}^{N}\nonumber\\ &\qquad \qquad \uparrow \GenerateSamples[\hat{f}^\ell_0, \hat{X}^\ell, w^\ell, N].
\end{align}

\paragraph{\textbf{Reconstruction of a Tensor Train surrogate}}
The VMC reconstruction approach of Section~\ref{sec:low-rank} is summarized in the function 
\begin{equation}
\left\{\hat{F}_{0, i}^{\ell, \mathrm{TT}}\right\}_{i=1}^{d} \leftarrow \ReconstructTT[\mathcal{S}^\ell, \mathcal{P}^\ell, \bs{r}^\ell, \tau_\on{Recon}].
\end{equation}
The tensor components $\hat{F}_{0, i}^{\ell, \mathrm{TT}}$ are associated with the corresponding basis $\mathcal{P}^\ell_i$ to form a rank $\bs{r}^\ell$ extended tensor train as defined in~\eqref{eq:tt representation} and~\eqref{eq:finite tt component}.
The additional parameter $\tau_{\on{Recon}}$ collects all parameters that determine the VMC algorithm.

The method basically involves the optimization of a loss functional over the set of tensor trains with rank (at most) $\bs{r}^\ell$.
In the presented numerical computations we consider a mean-square loss and the respective empirical approximation based on a current sample set $\mathcal{S}^\ell$. 
The tensor optimization, based on a rank adaptive, alternating direction fitting (ADF) algorithm, is implemented in the \texttt{xerus} library~\cite{xerus} and wrapped in the \texttt{ALEA} framework~\cite{alea}.
Additionally, the machine learning framework \texttt{PyTorch}~\cite{paszke2017automatic} can be utilized in \texttt{ALEA} to minimize the empirical cost functional from~\eqref{eq:empirical_cost_functional} by a wide class of state-of-the-art stochastic optimizers.
The latter enables stochastic gradient methods to compute the tensor coefficients as known from machine learning applications.
Having this setting in mind, the actual meaning of the parameter $\tau_{\on{Recon}}$ depends on the chosen optimizer.
In this article we focus on the ADF implementation and initialize \eg the starting rank, the number of iteration of the ADF and a target residual norm.

\begin{algorithm*}
  \begin{algorithmic}
  \REQUIRE{
  \begin{tabular}[t]{llr}
  & Lebesgue target density $f\colon \mathbb{R}^d\to\mathbb{R}_+$ & \eqref{eq:main density} \\
  & tensor spaces $\left\{\hat{X}^\ell\right\}_{\ell=1}^{L}$, with $\hat X^\ell = \bigtimes_{i=1}^d\hat{X}^\ell_i$ & \eqref{eq:local_pert_prior} \\
  & coordinate transformations $\Phi^\ell\colon\hat{X}^\ell \to X^\ell \subset \mathbb{R}^d$ & \eqref{eq:def:rank1} \\
  & $\qquad$ with rank-1 Jacobians $w^\ell\isdef\abs{\det\left[\mathcal{J}_{\Phi^\ell}\right]}\colon \hat{X}^\ell\to \mathbb{R}$ &\\
  & basis dimensions $(\bs{n}^1, \ldots, \bs{n}^L)$, $\bs{n}^\ell\in\mathbb{N}^d$ for $\ell=1,\ldots, L$ & \eqref{eq:finite tt component}\\
  & sample size $N_\ell\in\mathbb{N}$, $\ell=1,\ldots,L$ for level-wise reconstruction & \\
  & tensor train ranks $(\bs{r}^1, \ldots, \bs{r}^L)$, $\bs{r}^\ell\in\mathbb{N}^{d-1}$, for $\ell=1,\ldots, L$ & \eqref{eq:tt representation} \\
  & Gram-Schmidt tolerance parameter $\tau_\on{GS}$ &  \\
  & tensor reconstruction parameter $\tau_\on{Recon}$ & 
  \end{tabular}
  }
  \ENSURE{
  \begin{tabular}[t]{ll}
  & Level-wise low-rank approximation of perturbed prior \\
  \end{tabular}
  }
  \STATE{
  \item[]
  \begin{tabular}{lll}
  Diffeomorphism $\tilde T$&$\leftarrow $&$\ComputeTrafo[f]$\\
  \end{tabular}}
  \item[]
  \FOR{$\ell=1,\ldots, L$, (in parallel)}
  \STATE{\hspace{1ex}$\bullet$ Set transformed \emph{perturbed prior} $\hat{f}^\ell_0(\hat{x}) \isdef \left(f\circ \tilde T\otimes |\operatorname{det}\mathcal{J}_{\tilde{T}}|\right)\circ {\Phi^\ell}(\hat{x})$,\; $\hat{x}\in\hat{X}^\ell$}
  \item[]
  \STATE{
  \begin{tabular}{rll}
  \multicolumn{3}{l}{$\bullet$ Build one-dimensional ONB $\mathcal{P}_i^{\ell}$ of $\mcV_{i, n_i^\ell}\subseteq L^2(\hat{X}^\ell_i, w^\ell_i)$ for $i=1,\ldots, d$} \\
  \multicolumn{3}{l}{$\quad$} \\
  $\mathcal{P}^\ell = \{\mathcal{P}_i^\ell\}_{i=1}^{d}$ &$\leftarrow $&$\GenerateONB[\hat{X}^\ell, \bs{n}^\ell, w^\ell, \tau_\on{GS}]$ \\
  \multicolumn{3}{l}{$\quad$} \\
  \multicolumn{3}{l}{$\bullet$ Generate samples with respect to the weight $w^\ell$} \\
  \multicolumn{3}{c}{$\quad$} \\
  $\mathcal{S}^\ell\isdef\left\{\left(\hat{x}^s, \hat{f}_0^{\ell}(\hat{x}^s)\right)\right\}_{s=1}^{N}$&$\leftarrow$&$ \GenerateSamples[\hat{f}^\ell_0, \hat{X}^\ell, w^\ell, N]$\\
  \multicolumn{3}{l}{$\quad$} \\
  \multicolumn{3}{l}{$\bullet$ Reconstruct TT surrogate $\tilde{f}_{0}^{\ell, \mathrm{TT}}\colon \hat{X}^\ell\to\bbR$} \\
  \multicolumn{3}{l}{$\quad$} \\
  $\left\{\tilde{F}_{0, i}^{\ell, \mathrm{TT}}\right\}_{i=1}^{d}$ & $\leftarrow$ & $\ReconstructTT[\mathcal{S}^\ell, \mathcal{P}^\ell, \bs{r}^\ell, \tau_\on{Recon}]$ \\
  \multicolumn{3}{l}{$\quad$} \\
  \multicolumn{3}{l}{$\bullet$ Equip tensor components with basis} \\
  \multicolumn{3}{l}{$\quad$} \\
  $\hat{f}_{0}^{\ell, \mathrm{TT}}(\hat{x}) $&$ \isdef $&$\sum_{\bs{k}}^{\bs{r}^\ell}\prod_{i=1}^d \hat{f}_{0, i}^{\ell, \mathrm{TT}}[k_{i-1}, k_i](\hat{x}_i)$ \\
  where $\hat{f}_{0, i}^{\ell, \mathrm{TT}}[k_{i-1}, k_i](\hat{x}_i)$ &$ \isdef $ &$ \sum_{j=1}^{n_j^\ell} \hat{F}_{0, i}^{\ell, \mathrm{TT}} [k_{i-1}, \mu_i, k_i] P_{i, j}^\ell(\hat{x}_i)$ 
  \end{tabular}}
  \ENDFOR
  \RETURN{$\left\{\tilde{f_\ell}\right\}_{l=1}^L$}
  \caption{Tensor train surrogate creation of perturbed prior}
  \label{alg:algo_levelrecon}
  \end{algorithmic}
  \end{algorithm*}

  \section{Applications}
  \label{sec:applications}
  In the preceding sections the creation of surrogate models of quite generic probability density functions were developed. 
  Using this, in the following we focus on actual applications where such a representation is beneficial.
  We start with the framework of Bayesian inverse problems with target density~\eqref{eq:main density} corresponding to the Lebesgue posterior density.
  Subsequently, we cover the computation of moments and marginals.
  
  \subsection{Bayesian inversion}
  \label{sec:bayes}
  
  This section is devoted to a brief review of the Bayesian paradigm.
  We recall the general formalism and highlight the notation with the setup of Section~\ref{sec:DensityRepresentation} in mind.
  We closely follow the presentation in~\cite{EMS18} and refer to~\cite{stuart2010inverse,dashti2016bayesian,kaipio2006statistical} for a comprehensive overview.
  
  Let $Y$, $V$ and $\mcY$ denote separable Hilbert spaces equipped with norms $\norm{\cdot}_H$ and inner products $\langle\cdot, \cdot\rangle_H$ for $H\in\{Y, V, \mcY\}$.
  The uncertain quantity $y\in Y$ is tied to the model output $q\in V$ by the \emph{forward map} 
  \begin{equation}
    \label{eq:forwardOp}
    G\colon Y\to V, \quad \theta\mapsto q(y):=G(y).
  \end{equation}
  The usual forward problem reads
  \begin{equation}
    \label{eq:forward problem}
    \text{Given } y\in Y, \text{ find } q\in V.
  \end{equation}
  In contrast to this, the inverse problem is defined by
  \begin{equation}
    \label{eq:inverse problem}
    \text{Given observations of } q, \text{ find } y\in Y.
  \end{equation}
  
  The term \emph{observations} is determined by a bounded linear operator $\mcO\colon V\to \mcY$ that describes the measurement process of the quantity $q$. 
  In practical applications this could be direct observations at sensor points or averaged values from monitoring devices, \eg with $\mcY=\bbR^J$ for some $J\in\bbN$. 
  
  Classically, the (deterministic) quantification problem~\eqref{eq:inverse problem} is not well-posed.
  To overcome this, a problem regularization of some kind is required. 
  The chosen probabilistic approache introduces a random measurable additive noise $\eta\colon (\Omega, \mcU, \bbP) \to (\mcY, \mcB(\mcY))$ with law $\mcN(0, C_0)$ for some symmetric positive definite covariance operator $C_0$ on $\mcY$ to define the noisy measurements
  \begin{equation}
    \label{eq:measurements}
    \delta = (\mcO\circ G) (y) + \eta=: \mcG(y) + \eta\quad\text{where }\mcG\colon Y\to\mcY.
  \end{equation}
  As a consequence, the quantities $y$, $q$ and $\delta$ become random variables over a probability space $(\Omega, \mathcal{F}, \bbP)$ with values in $Y$, $V$ and $\mcY$, respectively.
  In~\cite{stuart2010inverse} mild conditions on the forward operator are derived to show a continuous version of Bayes formula which yields the existence and uniqueness of the Radon-Nikodym derivative of the (posterior) measure $\pi_\delta$ of the conditional random variable $y\vert\delta$ with respect to a prior measure $\pi_0$ of $y$.
  More precisely, by assuming Gaussian $\eta$ and independence with respect to $y$, both measures $\pi_0$ and $\pi_\delta$ on $Y$ are related by the \emph{Bayesian potential} 
  \begin{equation}
    \label{eq:Bayesian potential}
    \Psi(y, \delta) := \frac{1}{2}\langle C_0^{-1}(\delta - \mcG(y)), \delta - \mcG(y)\rangle_{\mcY}
  \end{equation}
  in the sense that 
  \begin{equation}
    \label{eq:Bayes formula} 
    \frac{\mathrm{d}\pi_\delta}{\mathrm{d}\pi_0}(y) = Z^{-1}\exp\left(-\Psi(y, \delta)\right), 
  \end{equation}
  with normalization constant $Z:= \mathbb{E}_{\pi_0} \left[ \exp\left(-\Psi(y, \delta)\right)\right].$
  Note that we interchangeably write $y$ as an element of $Y$ and the corresponding random variable with values in $Y$.
  
  \subsection{Bayesian inversion for parametric PDEs}
  \label{sec:BayesPDE}
  Random partial differential equations (PDEs), \ie \\ PDEs with correlated random data, play an important role in the popular field of Uncertainty Quantification (UQ). 
  As a prominent benchmark example, we consider the ground water flow model, also called the Darcy problem, as \eg examined in~\cite{EGSZ1,EPS17,EMPS18}.
  In this linear second order PDE model, the forward operator $G$ in~\eqref{eq:forwardOp} on some domain $D\subset \bbR^d$, $d=1, 2, 3$ is determined by a forcing term $g\in L^2(D)$ and the random quantity $a(y)\in L^\infty(D)$, which for almost every $y\in Y$ models a conductivity or permeability coefficient.
  The physical system is described by
  \begin{equation}
    \label{eq:Darcy}
    -\ddiv\left(a(y)\nabla q(y)\right) = g \quad\text{in}\; D, \quad q(y)\vert_{\partial D} = 0,
  \end{equation}
  and the solution $q(y)\in V:=H_0^1(D)$ corresponds to the system response $G(y) = q(y)$.
  Pointwise solvability of~\eqref{eq:Darcy} for almost every $y\in Y$ is guaranteed by a Lax-Milgram argument.
  For details we refer to~\cite{schwab2011sparse}. 
  
  For the applications in this article we employ a truncated log-normal coefficient field 
  \begin{equation}
  a(y) = \exp\left(\sum_{k=1}^d a_k y_k \right)
  \end{equation}
  for some fixed $(a_k)_{k=1}^d$ with $a_k\in L^2(D)$ and the image of some random variable with law $\mathcal{N}(0, I)$ denoted by $y=(y_k)_{k=1}^d\in Y$.
  Assume point observations~\eqref{eq:measurements} of $q$ at nodes $\delta = (\delta_1, \ldots, \delta_J)$ in $D$ corresponding to some unknown $q(y^\ast)$, $y^\ast\in Y$.
  We consider the Bayesian posterior density~\eqref{eq:Bayes formula} and set
  \begin{equation}
    \label{eq:pdeposterior density}
    f(y) = Z^{-1}\mathrm{d}\pi_{\delta}(y)\mathrm{d}\pi_0(y)
  \end{equation}
  as the Lebesgue density of the target measure $\pi$ on $Y$ according to~\eqref{eq:main density}.
  
  \subsection{Moment computation}
  \label{sec:moments}
  In this section we discuss the computation of moments for the presented layer-based format with low-rank tensor train approximations. 
  In particular we are interested in an efficient generation of the moment map
  \begin{equation}
       \bm{\alpha} \mapsto \int\limits_{Y} y^{\bm{\alpha}} f(y)\mathrm{d}\lambda(y),\quad \bm{\alpha}=(\alpha_k)_k\in\mathbb{N}_0^d.
  \end{equation} 
  Given some transport $\tilde{T}\colon X\to Y$ with an associated perturbed prior $\tilde{f}_0 = (f\circ \tilde{T})\otimes \abs{\det\mathcal{J}_{\tilde{T}}}$, by an integral transformation it holds
  \begin{equation}
      \int\limits_{Y} y^{\bm{\alpha}}f(y)\mathrm{d}\lambda(y) = 
      \int\limits_{X} \tilde{T}(x)^{\bm{\alpha}} \tilde{f}_0(x)\mathrm{d}\lambda(x).
  \end{equation}

  We fix $1\leq\ell\leq L$ and assume tensor spaces $\hat{X}^\ell, X^\ell$ such that a layer based splitting can be employed to obtain integrals over $X^\ell$ of the form
  \begin{equation}
  \int\limits_{Y} y^{\bm{\alpha}}f(y)\mathrm{d}\lambda(y) = 
      \sum_{\ell=1}^L\int\limits_{X^\ell} \tilde{T}(x)^{\bm{\alpha}} \tilde{f}_0(x)\mathrm{d}x.
  \end{equation}
  Note that we neglect the remaining unbounded layer $X^{L+1}$ since for moderate $\abs{\alpha}$ and $\mathrm{vol}(\bigcup_{\ell=1}^L X^\ell)$ sufficiently large, the contribution to the considered moment does not have a significant influence on the overall approximation.
  Additionally, a rank-1 stable diffeomorphism $\Phi^\ell\colon\hat{X}^\ell\mapsto X^\ell$ is assumed for which there exist univariate functions $\Phi^\ell_{,j}\colon\hat{X}_j^\ell\to X^\ell$ with $\Phi_{,j}^\ell = (\Phi_{i,j}^\ell)_{i=1}^{d}$ and $h_j\colon\hat{X}^\ell_j\to\bbR$ for every $j=1,\ldots, d$, such that
  \begin{equation}
     \label{eq_moment_rank1map}
      \Phi^\ell(\hat{x}) = \prod\limits_{j=1}^d\Phi_{,j}^\ell(\hat{x}_j)\quad \text{and}\quad
      |\det[\mathcal{J}_{\Phi^\ell}](\hat{x})| = \prod\limits_{j=1}^d h_j(\hat{x}_j).
  \end{equation}
  
  \subsubsection{Moments under affine transport}
  \label{sec:moments under affine trafo}
  Let $H=[h_{ki}]_{k,i=1}^d = [h_{1},h_{2},\ldots,h_{d}]\in\mathbb{R}^{d,d}$ be a symmetric positive definite matrix and $M=(M_i)_{i=1}^ d\in\mathbb{R}^d$ such that the considered transport map takes the form
  \begin{equation}
    \tilde{T}(\cdot) = H\cdot + M.
  \end{equation}
  With the multinomial coefficient for $j\in\mathbb{N}$, $\bm{\beta}\in\mathbb{N}_0^d$ with $j = \abs{\bm{\beta}}$ given by
  \begin{equation*}
       \begin{pmatrix} j \\ \bm{\beta}\end{pmatrix} := \frac{j!}{\beta_1!\cdot\ldots\cdot\beta_d!},
  \end{equation*}
  the computation of moments corresponds to the multinomial theorem as seen in the next lemma.
  \begin{lemma}
  \label{lem:multinomial}
  Let $k\in\mathbb{N}$ with $1\leq k\leq d$ and $\alpha_k\in\mathbb{N}_0$. It holds
  \begin{align}
      [H\Phi^\ell(\hat{x})+M)]_k^{\alpha_k} = 
      \sum\limits_{j_k=0}^{\alpha_k}\sum\limits_{|\bm{\beta}_k|=j_k} 
      C_k^H[j_k,\alpha_k,\bm{\beta}_k] \times \nonumber\\
      \qquad \times \prod\limits_{j=1}^{d}\bm{\Phi}_j^{\bm{\beta}_k}(\hat{x}_j),
  \end{align}
  where the high-dimensional coefficient $C_k^H$ is given by
  \begin{equation}
      C_k^H[j_k,\alpha_k,\bm{\beta}_k] := 
      \begin{pmatrix}\alpha_k \\ j_k\end{pmatrix} c_k^{\alpha_k-j_k}
      \begin{pmatrix} j_k \\ \bm{\beta}_k\end{pmatrix} h_k^{\bm{\beta}_k},
  \end{equation}
  with $c_k:=\sum\limits_{i=1}^d h_{ki}M_i$ and
  \begin{equation}
      \label{eq:definition_tensorPhi}
      \bm{\Phi}_j^{\bm{\beta}_k} := [\Phi_{1,j}^\ell(\hat{x}_j),\ldots,\Phi_{d,j}^\ell(\hat{x}_j)]^{\bm{\beta}_k}.
  \end{equation}
  \end{lemma}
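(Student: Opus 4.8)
The plan is to fix a single coordinate $k\in\{1,\ldots,d\}$ and a point $\hat{x}\in\hat{X}^\ell$ and to prove the identity pointwise, since the claimed formula is nothing but an algebraic rearrangement of $[H\Phi^\ell(\hat{x})+M]_k^{\alpha_k}$. The rearrangement chains together three elementary ingredients: (i) a binomial split separating the linear part of the affine map from its shift, (ii) the multinomial theorem applied to the linear part, and (iii) the rank-$1$ representation of $\Phi^\ell$ from~\eqref{eq_moment_rank1map} to factorize the resulting monomials across the tensor directions $j=1,\ldots,d$.

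Writing $h_k=(h_{k1},\ldots,h_{kd})$ for the $k$-th row of $H$, the $k$-th component of the affine image has the form $\langle h_k,\Phi^\ell(\hat{x})\rangle+c_k$ with the scalar shift $c_k=\sum_{i=1}^d h_{ki}M_i$ as in the statement. First I would apply the binomial theorem in the scalar $\langle h_k,\Phi^\ell(\hat{x})\rangle$, giving
\begin{equation*}
[H\Phi^\ell(\hat{x})+M]_k^{\alpha_k}=\sum_{j_k=0}^{\alpha_k}\binom{\alpha_k}{j_k}c_k^{\alpha_k-j_k}\Big(\sum_{i=1}^d h_{ki}\Phi^\ell_i(\hat{x})\Big)^{j_k}.
\end{equation*}
Next I would expand the inner power by the multinomial theorem over the $d$ summands $h_{ki}\Phi^\ell_i(\hat{x})$, which turns it into $\sum_{|\bm{\beta}_k|=j_k}\binom{j_k}{\bm{\beta}_k}\,h_k^{\bm{\beta}_k}\,\Phi^\ell(\hat{x})^{\bm{\beta}_k}$, where $h_k^{\bm{\beta}_k}=\prod_{i=1}^d h_{ki}^{(\beta_k)_i}$ and $\Phi^\ell(\hat{x})^{\bm{\beta}_k}=\prod_{i=1}^d\Phi^\ell_i(\hat{x})^{(\beta_k)_i}$.

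Finally I would insert the rank-$1$ factorization $\Phi^\ell_i(\hat{x})=\prod_{j=1}^d\Phi^\ell_{i,j}(\hat{x}_j)$ from~\eqref{eq_moment_rank1map} and interchange the two finite products,
\begin{equation*}
\Phi^\ell(\hat{x})^{\bm{\beta}_k}=\prod_{i=1}^d\Big(\prod_{j=1}^d\Phi^\ell_{i,j}(\hat{x}_j)\Big)^{(\beta_k)_i}=\prod_{j=1}^d\prod_{i=1}^d\Phi^\ell_{i,j}(\hat{x}_j)^{(\beta_k)_i}=\prod_{j=1}^d\bm{\Phi}_j^{\bm{\beta}_k}(\hat{x}_j),
\end{equation*}
the last step recognizing the per-direction vectors $\bm{\Phi}_j^{\bm{\beta}_k}$ of~\eqref{eq:definition_tensorPhi}. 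Substituting back and collecting the scalar factors $\binom{\alpha_k}{j_k}c_k^{\alpha_k-j_k}\binom{j_k}{\bm{\beta}_k}h_k^{\bm{\beta}_k}$ into $C_k^H[j_k,\alpha_k,\bm{\beta}_k]$ then gives exactly the asserted expression. There is no genuine obstacle here beyond index bookkeeping: the only things to check are that the double product over $(i,j)$ may be reordered (finiteness) and that the multiindex $\bm{\beta}_k\in\mathbb{N}_0^d$ is carried consistently throughout, so that it pairs with the rows of $H$ via $h_k^{\bm{\beta}_k}$ and with the vectors $\bm{\Phi}_j$ in the same componentwise fashion.
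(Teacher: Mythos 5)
Your proposal is correct and follows essentially the same route as the paper's own proof: a binomial split isolating the constant $c_k$, the multinomial theorem applied to $\bigl(\sum_{i=1}^d h_{ki}\prod_{j=1}^d \Phi_{ij}^\ell(\hat{x}_j)\bigr)^{j_k}$, and the interchange of the finite products over $i$ and $j$ to recover $\prod_{j=1}^d \bm{\Phi}_j^{\bm{\beta}_k}(\hat{x}_j)$. No gaps; the only remarks worth making are bookkeeping ones you already flag yourself.
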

  \begin{proof}
  Note that
  \begin{align*}
      [H\Phi^\ell(\hat{x})+M)]_k^{\alpha_k} &= 
     \sum\limits_{j_k=0}^{\alpha_k}\begin{pmatrix}\alpha_k \\ j_k\end{pmatrix} c_k^{\alpha_k-j_k}
  \times \\
  &\quad \times
  \left(\sum\limits_{i=1}^d h_{ki}\prod\limits_{j=1}^{d}\Phi_{ij}^\ell(\hat{x}_j)\right)^{j_k}.
  \end{align*}
  The statement follows by the multinomial theorem since 
  \begin{align*}
      \left(\sum\limits_{i=1}^d h_{ki}\prod\limits_{j=1}^{d}\Phi_{ij}^\ell(\hat{x}_j)\right)^{j_k}
      &= 
  \sum\limits_{|\bm{\beta}_k| = j_k}\begin{pmatrix} j_k \\
  \bm{\beta}_k\end{pmatrix}
  \left(\prod\limits_{i=1}^{d} h_{ki}^{(\bm{\beta}_k)_i}\right) \times \\
  &\qquad \times
  \left(\prod\limits_{j=1}^d \prod\limits_{i=1}^d\Phi_{ij}^\ell(\hat{x}_j)^{(\bm{\beta}_k)_i}\right).
  \end{align*}
  \end{proof}
  Generalizing Lemma \ref{lem:multinomial} to multiindices $\bm{\alpha}\in\mathbb{N}_0^d$ yields
  \begin{align}
      [H\Phi^\ell(\hat{x})+M)]^{\bm{\alpha}} &=
      \sum\limits_{\bm{j}=0}^{\bm{\alpha}}
     \sum\limits_{(|\bm{\beta}_k|)_k
      =\bm{j}}
      \left(\prod\limits_{k=1}^d C_k^H[j_k,\alpha_k,\bm{\beta}_k]\right) \times \nonumber\\
      &\quad\times
      \prod\limits_{j=1}^{d}\bm{\Phi}_j^{\sum\limits_{k=1}^d\bm{\beta}_k}(\hat{x}_j),
  \end{align}
  where  $\sum\limits_{(|\bm{\beta}_k|)_k
      =\bm{j}}:=\sum\limits_{|\bm{\beta}_1|=j_1}\ldots \sum\limits_{|\bm{\beta}_d|=j_d}$ is used.
  
  Exploiting the layerwise TT representation of $\hat{f}_\ell$ from~\eqref{eq:full_discrete_representation} and using the rank-1 stable map \eqref{eq_moment_rank1map}, the high-dimensional integral over $X^\ell$ reduces to 
  \begin{align}
     \label{eq:separateIntegrals}
      &\phantom{=}\int\limits_{X_\ell}\tilde{T}(x)^{\bm{\alpha} }\tilde{f}_0(x)\mathrm{d}\lambda(x) \nonumber\\
      &= 
       \sum\limits_{\bm{j}=0}^{\bm{\alpha}}
     \sum\limits_{(|\bm{\beta}_k|)_k
      =\bm{j}}\left(\prod\limits_{k=1}^d C_k^H[j_k,\alpha_k,\bm{\beta}_k]\right)\nonumber\\
     &\qquad \times \sum\limits_{\bm{k}=\bm{0}}^{\bm{r}_\ell }
     \prod\limits_{i=1}^d
     \int\limits_{\hat{X}_i} \left[\hat{f}_{\ell,i}[k_{i-1},k_{i}] \otimes
  \bm{\Phi}_i^{\sum\limits_{k=1}^d\bm{\beta}_k}\otimes h_i\right]\!(\hat{x}_i)\,\mathrm{d}{\hat{x}}_i.
  \end{align}
  Note that the right-hand side is composed via decoupled one dimensional integrals only.
  We point out that while the structure is simplified, the definition of $\bm{\Phi}_j$ in~\eqref{eq:definition_tensorPhi} a priori results in several integrals (indexed by $\sum\limits_{k=1}^d\bm{\beta}_k$).
  These integrals, whose number depends on the cardinality of $\bm{\alpha}$, have to be computed.
  This simplifies further in several cases, \eg when $\Phi^\ell$ transforms the spherical coordinate system to Cartesian coordinates.
  
  \paragraph{\textbf{Moment computation using spherical coordinates}}
  In the special case that $\Phi^\ell$ is the multivariate polar transformation of Example~\ref{ex:polar_coords}, the number of distinct computation of integrals from \eqref{eq:separateIntegrals} reduces significantly.
  Recall that $\hat{x}_1 = \rho$, $\hat{x}_{2:d} = \bm{\theta}=(\theta_0,\ldots,\theta_{d-2})$ and let $\beta_i^k:=(\bm{\beta}_k)_i$ be the $i$-th entry of $\bm{\beta}_k$.
  We find that
  \begin{align}
  \label{eq:polar_exponents_1}
      \bm{\Phi}_1^{\sum\limits_{k=1}^d\bm{\beta}_k}(\rho) &= \rho^{|\bm{j}|}, \\
      \bm{\Phi}_2^{\sum\limits_{k=1}^d\bm{\beta}_k}(\theta_0)
      &=\cos^{\left(\sum\limits_{k=1}^d\beta_1^k\right)}(\theta_0) 
  \sin^{\left(\sum\limits_{k=1}^d\beta_2^k\right)}(\theta_0),\\
  \label{eq:polar_exponents_3}
      \bm{\Phi}_{i+2}^{\sum\limits_{k=1}^d\bm{\beta}_k}(\theta_{i})
      &= \sin^{\left(\sum\limits_{l=1}^i\sum\limits_{k=1}^d \beta_l^k\right)}(\theta_i) 
  \cos^{\left(
  \sum\limits_{k=1}^d\beta_{i+1}^k
  \right)}(\theta_i).
  \end{align}
  for $1 \leq i \leq d-2 .$
  
  The exponential complexity due to the indexing by $\sum_{k=1}^d\bm{\beta}_k$ reduces to linear complexity in $|\bm{\alpha}|$.
  More precisely, the amount of exponents in \eqref{eq:polar_exponents_1} 
  - \eqref{eq:polar_exponents_3} is linear in the dimensions since the sums only depend on $|\bm{\alpha}|$, leading to $\mathcal{O}(|\bm{\alpha}|d)$ different integrals that may be precomputed for each tuple $(k_{i-1},k_{i})$.
  This exponential complexity in the rank vanishes in the presence of an approximation basis associated with each coordinate dimension as defined in Section~\ref{sec:low-rank}.

  \subsection{Computation of marginals}
  \label{sec:marginals}
  In probability theory and statistics, marginal distributions and especially marginal probability density functions provide insights into an underlying joint density by means of lower dimensional functions that can be visualized.
  The computation of marginal densities is a frequent problem encountered \eg in parameter estimation and when using sampling techniques since histograms and corner plots provide easy access to (in general high-dimensional) integral quantities.
  
  In contrast to the Markov chain Monte Carlo algorithm, the previously presented method of a layer based surrogate for the Lebesgue density function $f\colon Y=\mathbb{R}^d \to \bbR$ allows for a functional representation and approximation of marginal densities without additional evaluations of $f$.
  
  For simplicity, for $y\in Y$ and $i=1, \ldots, d$ define $y_{-i} = (y_1, \ldots, y_{i-1}, y_{i+1}, \ldots y_d)$ as the marginalized variable where the $i$-th component is left out and $f(y_{-i}, y_i) \isdef f(y)$.
  Then, for given $i=1,\ldots, d$, the $i$-th marginal density reads
  \begin{equation}
    \label{eq:marginalpdf}
    \mathrm{d}f_i(y_i) \isdef \int_{\bbR^{d-1}} f(y_{-i}, y_i) \mathrm{d}\lambda(y_{-i}).
  \end{equation}
  Computing this high-dimensional integral by quadrature or sampling is usually infeasible and the transport map approach as given by~\eqref{eq:transport_moment_equation} fails since the map $T\colon X\to Y$ cannot be used directly in~\eqref{eq:marginalpdf}.
  Alternatively, we can represent $\mathrm{d}f_i\colon \bbR \to \bbR$ in a given orthonormal basis $\{\varphi_j\}_{j=1}^{N_\varphi}$ and consider
  \begin{equation}
    \mathrm{d}f_i(y_i) = \sum_{j=1}^{N_{\varphi}} \beta_j \varphi_j(y_i),
  \end{equation}
  where $\beta_j$, $j=1, \ldots, N_\varphi$ denotes the $L^2(\bbR)$ projection coefficient
  \begin{equation}
    \label{eq:marginalL2Project coef}
    \beta_j \isdef \int_{\bbR} \varphi_j(y_i) \mathrm{d}f_i(y_i) \mathrm{d}\lambda(y_i).
  \end{equation}
  With this the marginalisation can be carried out similar to the computations in Section~\ref{sec:moments}.
  
  A convenient basis is given by monomials since~\eqref{eq:marginalL2Project coef} then simplifies to 
  \begin{equation}
    \label{eq:marginalL2ProjectMonomial}
    \beta_j = \int_{\bbR^d} y_i^j f(y)\mathrm{d}\lambda(y).
  \end{equation}
  This is the moment corresponding to the multiindex $\alpha=(\alpha_k)_{k=1}^d\in\bbN^d$ with $\alpha_k = \delta_{k, j}$.
  Alternatively, indicator functions may be considered in the spirit of histograms.
  
  \subsection{More general quantities of interest}
  \label{sec:qoi}
  One is frequently concerned with efficiently computing the expectation of some quantity of interest (QoI) $Q\colon Y\to\bbR$
  \begin{equation}
    \label{eq:int_qoi}
    \mathbb{E}\left[Q\right] = \int_{Y} Q(y) f(y) \mathrm{d}\lambda(y).
  \end{equation}
  We discussed this issue for moments in Section~\ref{sec:moments} and basis representations of marginals in Section~\ref{sec:marginals}.
  In those cases the structure of $Q$ allows for direct computations of the integrals via tensor contractions.
  For more involved choices of the QoI we suggest a universal sampling approach by repeated evaluation of the low-rank surrogate.
  More precisely, by application of the integral transformation we can approximate
  \begin{equation}
    \label{eq:trafoqoi}
    \mathbb{E}\left[Q\right] \approx \sum_{\ell=1}^L\int_{\hat{X}^\ell} Q\circ \tilde{T}\circ \Phi^\ell(\hat{x}) \tilde{f}_{0}^{\ell, \mathrm{TT}}(\hat{x}) \abs{\mathrm{det}\left[\mathcal{J}_{\Phi^\ell}\right](\hat{x})}\mathrm{d}\lambda(\hat{x})
  \end{equation}
  and replace the integrals over $\hat{X}^\ell$ by Monte Carlo estimates with samples according to the (normalized) weight $\abs{\mathrm{det}\left[\mathcal{J}_{\Phi^\ell}\right]}$.
  Those samples can be obtained by uniform sampling on the tensor spaces $\hat{X}^\ell$ and the inverse transform approach as mentioned in the paragraph \textbf{Generating Samples} of Section~\ref{sec:Algorithm}.
  Alternatively, efficient MCMC sampling by marginalization can be employed~\cite{weare2007efficient}.

  \section{Numerical validation and applications}
  \label{sec:numerical experiments}
  This section is devoted to a numerical validation of the proposed Algorithm~\ref{alg:algo_levelrecon} using various types of transformations $T$ while employing it with practical applications.
  We focus on three example settings.
  The first consists of an artificial Gaussian posterior density, which could be translated to a linear forward model and Gaussian prior assumptions in the Bayesian setting.
  Second, we study the approximation under non-exact transport and conclude as a third setting with an actual Bayesian inversion application governed by the log-normal Darcy flow problem of Section~\ref{sec:BayesPDE}.
  
  \subsection{Validation experiment 1: Gaussian density}
  In this experiment we confirm the theoretical results from Section~\ref{sec:error estimates} and verify the numerical algorithm. 
  Even though the examined approximation of a Gaussian density is not a challenging task for the proposed algorithm, it can be seen as the most basic illustration revealing the possible rank-1 structure of the perturbed prior under optimal transport.
  
  We consider the posterior density determined by a Gaussian density with covariance matrix $\Sigma\in\mathbb R^{d, d}$ and mean $\mu\in\mathbb R^d$ as 
  \begin{equation}
      \label{eq:ne_gaussian density} 
      \frac{\mathrm{d}\pi}{\mathrm{d} \lambda}(x) = f(x) = C \exp\left(-\frac{1}{2}\norm{x - \mu}^2_{\Sigma^{-1}}\right),
  \end{equation}
  where $C=(2\pi)^{-\nicefrac{d}{2}}\det\Sigma^{-\nicefrac{1}{2}}$ is the normalizing factor of the multivariate Gaussian. 
  We set the covariance operator such that the Gaussian density belongs to uncorrelated random variables, i.e. $\Sigma$ exhibits a diagonal structure, and it holds for some $0 < \sigma \ll 1$ that $\Sigma = \sigma^2 I$.
  This Gaussian setting has several benefits as a validation setting. 
  On the one hand, we have explicit access to the quantities that are usually of interest in Bayesian inference like the mean, covariance, normalization constant and marginals. 
  On the other hand, the optimal transport to a standard normal density
  \begin{equation}
    \label{eq:std normal} 
    f_0(x) = (2\pi)^{-\nicefrac{d}{2}} \exp\left(-\frac{1}{2}\norm{x}^2\right)
  \end{equation} 
  is given by an affine linear function, defined via mean $\mu$ and covariance $\Sigma$ as proposed in Remark~\ref{rem:affine trafo choice}.
  We subsequently employ the multivariate polar transformation from Example~\ref{ex:polar_coords} and expect a rank-1 structure in the reconstruction of the local approximations of the (perturbed) prior.
  
  The remainder of this section considers different\\ choices of $\sigma\in\mathbb R$ and $d\in\mathbb N$ and highlights the stability of our method under decreasing variance (\ie with higher density concentration) and increasing dimension.
  The approximations are compared with their exact counterparts.
  More specifically, the error of the normalization constant is observed, namely
  \begin{equation}
    \label{eq:err_Z}
    \err_Z \isdef \abs{1 - Z_h},
  \end{equation}
  the relative error of the mean and covariance in the Euclidean and Frobenius norms $|\cdot|_2$ and $|\cdot|_{\mathrm{F}}$,
  \begin{equation}
    \label{eq:err_mu_sigma}
    \err_\mu \isdef |\mu - \mu_h|_2 |\mu|_{2}^{-1}, \quad 
    \err_\Sigma \isdef |\Sigma - \Sigma_h|_{\mathrm{F}}|\Sigma|_{\mathrm{F}}^{-1},
  \end{equation}
  and the deviation in terms of the Kullback-Leibler divergence~\eqref{eq:Def KL}.
  Computing the Kullback-Leibler divergence is accomplished by Monte Carlo samples $(x_i)_{i=1}^{N_\on{KL}}$ of the posterior (\ie in this case the multivariate Gaussian posterior) to compute the empirical approximation 
  \begin{align}
    \label{eq:empirical_KL}
    \mathrm{d}_\on{KL}(\pi, \pi_h) &= \int_{\mathbb{R}^d} \log\left(\frac{f(x)}{f_h(x)}\right) f(x)\mathrm{d}\lambda(x)\nonumber\\ &\approx \frac{1}{N_\on{KL}} \sum_{i=1}^{N_\on{KL}} \log\left(\frac{f(x_i)}{f_h(x_i)}\right).
  \end{align}
  The index $h$ generically denotes the employed approximation~\eqref{eq:full_discrete_representation}.
  In the numerical experiments the convergence of these error measures is depicted with respect to the number of calls to the forward model (\ie the Gaussian posterior density), the discretization of the radial component $\rho\in [0, \infty)$ in the polar coordinate system and the number of samples on each layer $X^\ell$, $\ell=1, \ldots, L$, for fixed $L\in\mathbb{N}$.
  
  \begin{table*}
    \pgfplotstableread[col sep=comma]{Z_err_dvar.dat}\zerr
    \begin{center}
     \pgfplotstabletypeset[
     col sep=comma,
     every head row/.style={before row=\toprule,after row=\midrule},
     every last row/.style={after row=\bottomrule},
     columns/d/.style={
       column name={dimension},
     },
     columns/1e-2/.style={
       column name={$\sigma^2=10^{-2}$},
     },
     columns/1e-4/.style={
       column name={$\sigma^2=10^{-4}$},
     },
     columns/1e-6/.style={
       column name={$\sigma^2=10^{-6}$},
     },
     columns/1e-8/.style={
       column name={$\sigma^2=10^{-8}$},
     },
     ]{\zerr}
     \caption{Numerical approximation of $Z$ in the Gaussian example. 
     Error of the normalization constant computed via a TT surrogate to $Z=1$.}
     \end{center}
     \label{table:gauss_z}
  \end{table*}  
  
  In Table~\ref{table:gauss_z} $\err_Z$ is depicted for different choices of $\sigma$ and $d$. 
  The experiment comprises radial discretizations $0 = \rho_0 < \rho_1 < \ldots < \rho_{L} = 10$ with $L=19$ equidistanly chosen layers and $1000$ samples of $f_0$ on each resulting subdomain $X^\ell$. 
  The generated basis~\eqref{eq:GenerateOnb} contains polynomials of maximal degree 7 in $\rho_\ell$, $\ell=0, \ldots, L$, and constant functions in every angular direction.
  The choice of constant functions relies on the assumption that the perturbed prior that has to be approximated corresponds to the polar transformation of~\eqref{eq:std normal}, which is a function in $\rho$ only. 
  Additional numerical test show that even much fewer samples and a larger basis lead to the assumed rank-1 structure.
  It can be observed that the approximation quality of $Z$ is invariant under the choice of $\sigma$ and fairly robust with the dimension $d$, which is expected since the transformation is exact and the function to reconstruct is a rank-1 object.

  \begin{figure*}
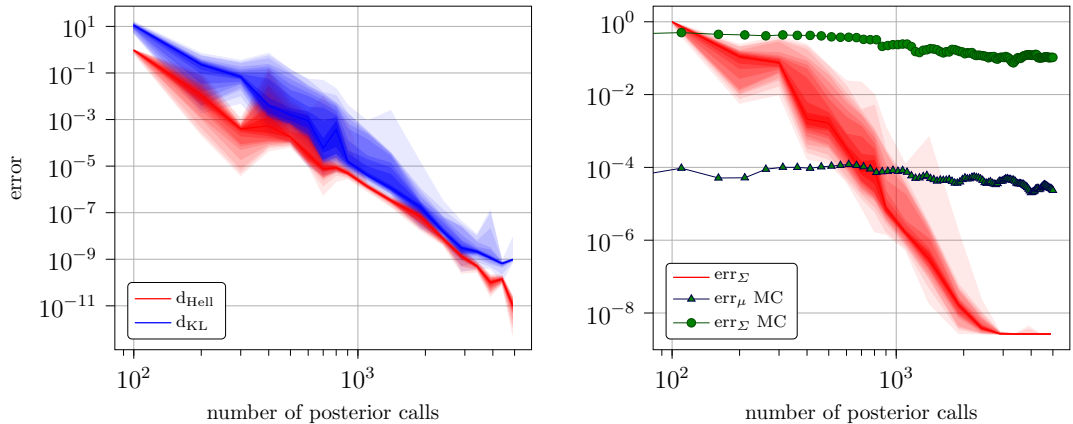

    \begin{center}

    \end{center}

    \caption{Gaussian density example with $d=10$, mean $\mu=\bm{1}$ and noise level $\sigma=10^{-7}$. 
  Tensor reconstructions are repeated with $50$ random sample sets to show quantile range from $5\%-95\%$ (pastel) to the median (bold).
  Hellinger distance and Kullback-Leibler divergence are shown (left) and the relative covariance error together with MCMC results for mean and covariance are given (right).
    }
    \label{fig:gauss_calls}
  \end{figure*}
  
  In Figure~\ref{fig:gauss_calls} we compare the number of calls of the posterior density $f$ explicitly.
  Here, the presented low-rank surrogate is again constructed on an increasing number of layers, whereas the Monte Carlo estimates are computed using a Markov Chain Monte Carlo algorithm and subsequent empirical integration of the error quantity.
  By taking $100$ samples for each added layer, we observe fast convergence in comparison to the slow MC approach\footnote{We emphasize that we just use a baseline MCMC algorithm for comparison. Although more sophisticated MCMC methods could show a more favorable convergence behavior, the fundamental qualitative difference due to entirely different approximation approaches would still persist.}. 
  To further analyse the reconstruction stability we repeat the experiment $50$ times and show empirical quantiles.
  The light area represents the $90\%$ quantile of the distribution and the bold line is the median.
  We observe a larger variance for the Kullback-Leibler divergence in contrast to the Hellinger distance.
  
  Note that we do not show the tensor approximation result for $\mathrm{err}_\mu$ since already for the first case of only $100$ evaluations of the posterior (which corresponds to a single layer) we obtain results close to machine precision.
  This is due to the choice of an exact transport, already containing the correct mean, and how the mean is computed in the presented format, see Section~\ref{sec:moments under affine trafo}.
  In short, the approximation cancels due to the normalization and only the correct mean of the transport formula is left. 
  Concerning the stagnation of $\mathrm{err}_\Sigma$ we suspect a precision problem in the computation, which is confirmed by the small variance.
  Nevertheless, an approximation of around seven magnitudes smaller than MCMC  for the covariance is achieved. 
  
  \subsection{Validation experiment 2: Perturbation of exact transport}
  In the following experiment we consider a so-called ``banana example'' as posterior density, see e.g.~\cite{marzouk2016introduction}.
  Let $f_0$ be the density of a standard normal Gaussian measure and let $T_{\Sigma}$ be the affine transport of $\mathcal{N}(0,I)$ to the Gaussian measure $\mathcal{N}(0,\Sigma)$.
  Furthermore, set
  \begin{equation}
  T_2(x) = \begin{pmatrix} x_1 \\ x_2 - (x_1^2 + 1) \end{pmatrix}.
  \end{equation}
  The exact transport $T$ from $\mathcal{N}(0,I)$ to the curved and concentrated banana distribution with density $f$ is then given by
  \begin{equation}
  T(x) = T_2 \circ T_{\Sigma}(x), \quad \Sigma = \begin{pmatrix} 1 & 0.9 \\ 0.9 & 1 \end{pmatrix}.
  \end{equation}
  Note that the employed density can be transformed into a Gaussian using a quadratic transport function.
  For this experiment, we employ transport maps $\tilde{T}$ of varying accuracy for the pull-back of the posterior density to a standard Gaussian.
  In particular we use an approximation $\tilde{T}_1$ (obtained with~\cite{TM}) of the optimal affine transport $T_1$, and the quadratic transport $T$ to build an approximation $\tilde{T}$ given as convex combination
  \begin{equation}
      \label{eq:numerik_convexpertT}
      \tilde{T}(x) =  (1-t)\;\tilde{T}_1(x) + t \; T(x),\quad t\in[0,1].
  \end{equation}
  For $t=1$, the transport map is optimal since it generates the desired reference density.
  For $0 \leq t < 1$ a perturbed prior density is obtained with strength of perturbation determined by $t$.
  The impact of the perturbed transport is visualized in Figure~\ref{fig:convex_illustration}.
  \begin{figure*}
      \centering
      \includegraphics[width = 0.75\linewidth]{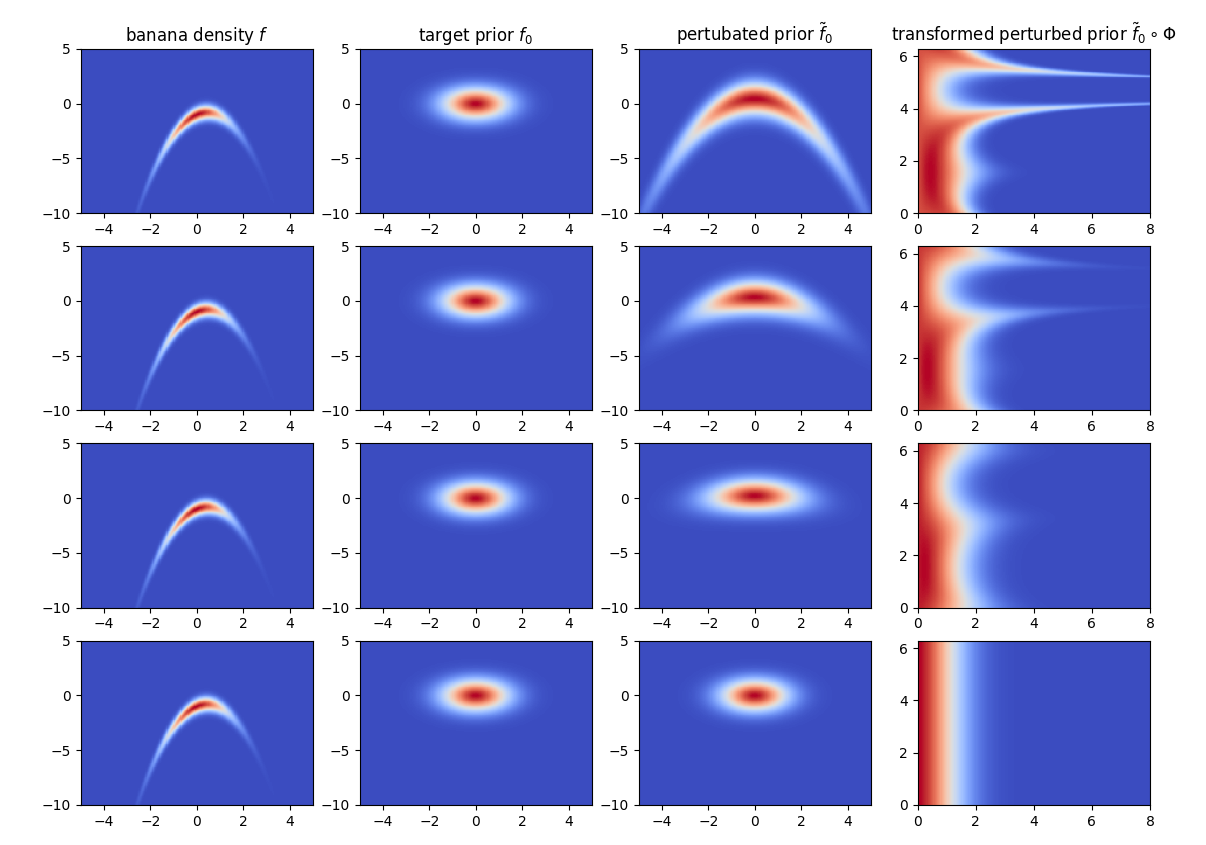}
      \caption{Illustration of the effect of different transports in~\eqref{eq:numerik_convexpertT} for $t=0,0.25,0.5,1.$ (top to bottom).}
      \label{fig:convex_illustration}
  \end{figure*}
  It can be observed that the transformed perturbed prior is not of rank-1 as long as the transformation is inexact.
  Furthermore, the difference between the target prior and the perturbed prior is eminent, which implies that \eg a Laplace approximation to the considered banana density would neglect possible important features of the distribution.

  In Figure~\ref{fig:convex_example} we illustrate the impact of an inexact transport on the approximation results in terms of $\err_\mu$ and $\err_\Sigma$.
  For the considered target density, mean and covariance are known analytically and hence no reference sampling has to be carried out.
  We additionally employ an MCMC sampling to show the improvement due to the additional low-rank reconstruction.
  For the optimal transport map one observes that the surrogate reconstruction reduces to the approximation of a rank-1 Gaussian density, which can be done efficiently with few evaluations of $f$.
  If the transport is only linear and inaccurate, results comparable to MCMC are achieved.
  For a more accurate transport, the low-rank reconstruction leads to drastically improved estimates.
    
  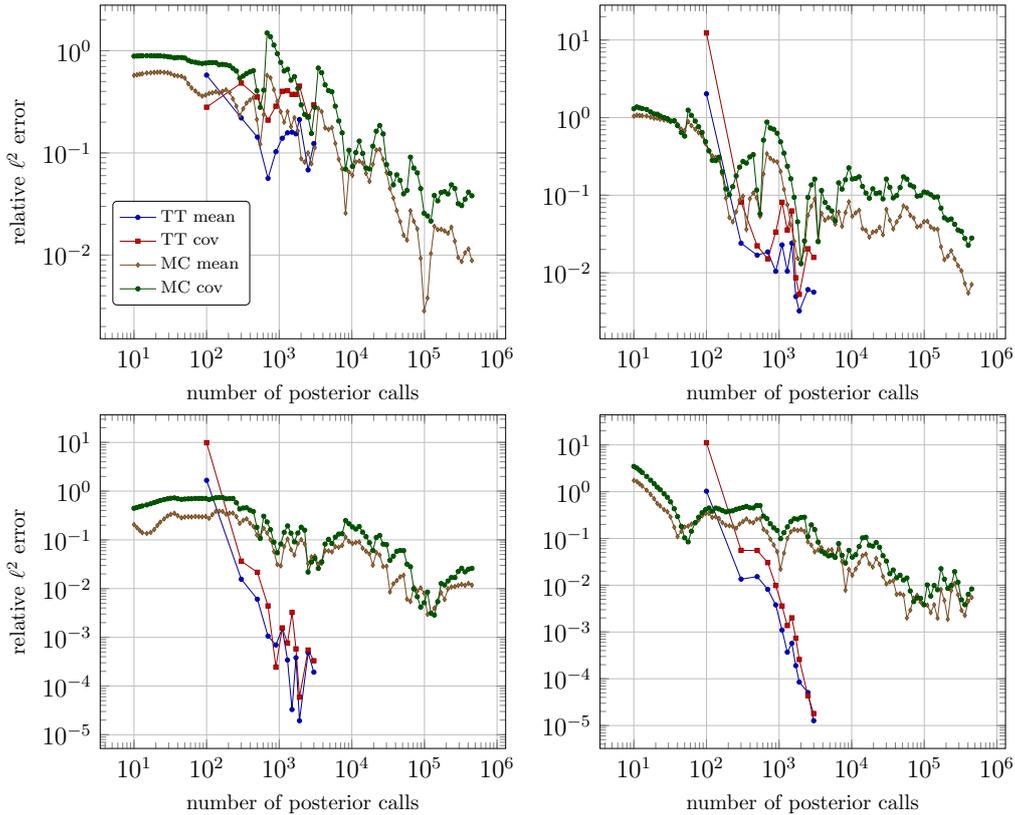
\begin{figure*}
   \begin{center}
    \begin{tikzpicture}[tikzpic options]
    \begin{loglogaxis}[zmeancoverrCallsQuada plot]
  
      \pgfplotstableread[col sep=comma]{numerics/convex/alpha_0_0/tt.txt}\tta
  
      \addplot table[x=calls,y expr=\thisrow{mean_err_l2_rel}] {\tta}; 		
      \addlegendentry{TT mean};
      \addplot table[x=calls,y expr=\thisrow{cov_err_l2_rel}] {\tta}; 		
      \addlegendentry{TT cov};
      
      \pgfplotstableread[col sep=comma]{numerics/convex/alpha_0_0/mc.txt}\mc
  
      \addplot table[x=sample,y expr=\thisrow{mean_err_l2_rel}] {\mc}; 		
      \addlegendentry{MC mean};
      \addplot table[x=sample,y expr=\thisrow{cov_err_l2_rel}] {\mc}; 		
      \addlegendentry{MC cov};
      
    \end{loglogaxis}
    \end{tikzpicture}
    \begin{tikzpicture}[tikzpic options]
    \begin{loglogaxis}[zmeancoverrCallsQuadb plot]
      
      \pgfplotstableread[col sep=comma]{numerics/convex/alpha_0_25/tt.txt}\tta
  
      \addplot table[x=calls,y expr=\thisrow{mean_err_l2_rel}] {\tta}; 		
      \addlegendentry{TT mean $\alpha=0.25$ (100)};
      \addplot table[x=calls,y expr=\thisrow{cov_err_l2_rel}] {\tta}; 		
      \addlegendentry{TT cov $\alpha=0.25$ (100)};
  
      \pgfplotstableread[col sep=comma]{numerics/convex/alpha_0_25/mc.txt}\mc
  
      \addplot table[x=sample,y expr=\thisrow{mean_err_l2_rel}] {\mc}; 		
      \addlegendentry{MC mean $\alpha=0.25$)};
      \addplot table[x=sample,y expr=\thisrow{cov_err_l2_rel}] {\mc}; 		
      \addlegendentry{MC cov $\alpha=0.25$};
      \legend{}				
    \end{loglogaxis}
    \end{tikzpicture}
    
    \begin{tikzpicture}[tikzpic options]
    \begin{loglogaxis}[zmeancoverrCallsQuada plot]
  
      \pgfplotstableread[col sep=comma]{numerics/convex/alpha_0_5/tt.txt}\tta
  
      \addplot table[x=calls,y expr=\thisrow{mean_err_l2_rel}] {\tta}; 		
      \addlegendentry{TT mean $\alpha=0.5$ (100)};
      \addplot table[x=calls,y expr=\thisrow{cov_err_l2_rel}] {\tta}; 		
      \addlegendentry{TT cov $\alpha=0.5$ (100)};
      
      \pgfplotstableread[col sep=comma]{numerics/convex/alpha_0_5/mc.txt}\mc
  
      \addplot table[x=sample,y expr=\thisrow{mean_err_l2_rel}] {\mc}; 		
      \addlegendentry{MC mean $\alpha=0.5$)};
      \addplot table[x=sample,y expr=\thisrow{cov_err_l2_rel}] {\mc}; 		
      \addlegendentry{MC cov $\alpha=0.5$};
      \legend{}
    \end{loglogaxis}
    \end{tikzpicture}
    \begin{tikzpicture}[tikzpic options]
    \begin{loglogaxis}[zmeancoverrCallsQuadb plot]
      
      \pgfplotstableread[col sep=comma]{numerics/convex/alpha_1_0/tt.txt}\tta
  
      \addplot table[x=calls,y expr=\thisrow{mean_err_l2_rel}] {\tta}; 		
      \addlegendentry{TT mean $\alpha=1$ (100)};
      \addplot table[x=calls,y expr=\thisrow{cov_err_l2_rel}] {\tta}; 		
      \addlegendentry{TT cov $\alpha=1$ (100)};
      
      \pgfplotstableread[col sep=comma]{numerics/convex/alpha_1_0/mc.txt}\mc
  
      \addplot table[x=sample,y expr=\thisrow{mean_err_l2_rel}] {\mc}; 		
      \addlegendentry{MC mean $\alpha=1$)};
      \addplot table[x=sample,y expr=\thisrow{cov_err_l2_rel}] {\mc}; 		
      \addlegendentry{MC cov $\alpha=1$};
      \legend{}
    \end{loglogaxis}
    \end{tikzpicture}
  
    \caption{Convex combination of affine and quadratic transport for the banana posterior.
    Affine linear map ($t=0$ top left), transport with $t=0.25$ (top right), $t=0.5$ (bottom left) and exact quadratic map ($t=1$, bottom right).
    Error quantities $\err_\mu$ and $\err_\Sigma$ for the employed tensor train surrogate and a MCMC approximation in terms of the number of calls to the posterior function.
    The surrogate is reconstructed from $100$ samples per layer yielding a tensor with radial basis up to polynomial degree $9$ and Fourier modes up to degree $20$. 
    }
    \label{fig:convex_example}
  \end{center}
  \end{figure*}

  \subsection{Bayesian inversion with log-normal Darcy forward model}
  \label{sec:Experiment Bayes with PDE}
  Revisiting the example of Section~\ref{sec:BayesPDE}, we consider the elliptic diffusion problem with a log-normal random parametric permeability coefficient.
  The considered field in $L^2(Y, L^\infty(D))$ takes the form 
  \begin{equation}
    a(x, y) = \exp\left(\sum_{i=1}^d a_i(x) y_i \right),
  \end{equation}
  where the $y_i$ correspond to random variables with law $\mathcal{N}(0, 1)$ and $L^2(D)$ orthonormal functions $a_i$ being planar Fourier cosine modes.
  A detailed description and an adaptive Galerkin approach to solve the forward problem can be found in~\cite{EMPS18}.
  For the inverse problem, the observation operator is modelled by $J=144$ equidistantly distributed observations in $D=[0, 1]^2$ of the solution $q(y^\ast)\in H_0^1(D)$ for some $y^\ast\in Y=\bbR^d$, which is drawn from a standard normal distribution.
  Additionally, the observations are perturbed by a centered Gaussian noise with covariance $\sigma I$ with $\sigma = 10^{-7}$.
  
  To obtain the desired relative error quantities, we employ reference computations that involve adaptive quadrature for the two dimensional example in Figure~\ref{fig:poisson mean} and Markov Chain Monte Carlo integration with $10^6$ steps of the chain and a burn-in time of $1000$ samples for the experiment in Figure~\ref{fig:poisson d}. 
  For the reconstruction algorithm an affine linear transport is estimated by Hessian information of the log-likelihood and on every layer we employ $100$ samples.
  The respective relative errors are displayed in Figure~\ref{fig:poisson d}.
  
  The stagnation of the graphs in Figure~\ref{fig:poisson mean} is on the one hand governed by the observation noise and on the other hand explicable by a non-optimal reference solutions since the TT approximation yields results equivalent to an adaptive quadrature when taking $L=5$ layers of refinement and thus a total of $500$ samples.
  
  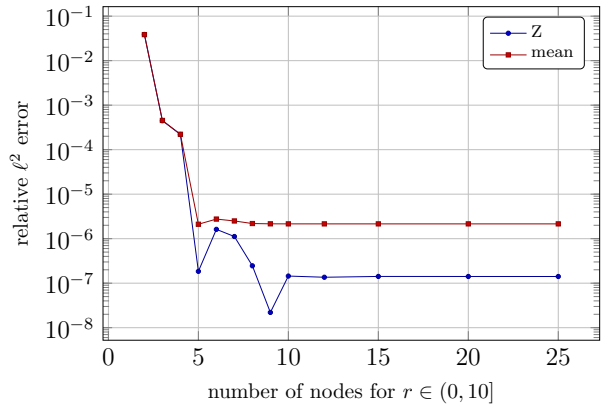
\begin{figure*}
  \begin{center}
    \begin{tikzpicture}[tikzpic options]
    \begin{semilogyaxis}[zmeancoverr plot]
  
      \pgfplotstableread[col sep=comma]{radi_err_poisson_d2.dat}\radierr
  
      \addplot table[x=r,y expr=\thisrow{z}] {\radierr}; 		
      \addlegendentry{Z};
      \addplot table[x=r,y expr=\thisrow{mean}] {\radierr}; 		
      \addlegendentry{mean};
      
    \end{semilogyaxis}
    \end{tikzpicture}
  \end{center}
    \caption{Comparison of the computed reference and the low-rank surrogate of (1) normalization constant ($\err_Z$), and (2) mean ($\err_\mu$). 
    For the Darcy setting with $d=2$ we observe $144$ nodes in the physical domain. 
    The measurements are perturbed by Gaussian noise with deviation $\eta=1e-7$.
    We employ an adaptive quadrature in the two dimensional space to obtain the reference quantities.
    The stagnation of the graphs are due to non-optimal reference solutions. More precisely, the TT approximation yields equivalent results to adaptive quadrature when taking 5 nodes of refinement.}
    \label{fig:poisson mean}
  \end{figure*}
  
  \begin{figure*}
  \begin{center}
    \begin{tikzpicture}[tikzpic options]
    \begin{loglogaxis}[zmeancoverrCallsLeft plot]
  
      \pgfplotstableread[col sep=comma]{numerics/poisson/d2/tt.txt}\tt
      \pgfplotstableread[col sep=comma]{numerics/poisson/d2/mc.txt}\mc
  
      \addplot table[x=calls,y expr=\thisrow{mean_err_l2_rel}] {\tt}; 		
      \addlegendentry{rel mean err TT};
      \addplot table[x=sample,y expr=\thisrow{mean_err_l2_rel}] {\mc}; 		
      \addlegendentry{rel mean err MC};
      \addplot table[x=calls,y expr=\thisrow{cov_err_l2_rel}] {\tt}; 		
      \addlegendentry{rel cov err TT};
      
      \addplot table[x=calls,y expr=\thisrow{kl}] {\tt}; 		
      \addlegendentry{KL distance};
    \end{loglogaxis}
    \end{tikzpicture}
    \begin{tikzpicture}[tikzpic options]
    \begin{loglogaxis}[zmeancoverrCallsRight plot]
  
      \pgfplotstableread[col sep=comma]{numerics/poisson/d10/tt.txt}\tt
      \pgfplotstableread[col sep=comma]{numerics/poisson/d10/mc.txt}\mc
  
      \addplot table[x=calls,y expr=\thisrow{mean_err_l2_rel}] {\tt}; 		
      \addlegendentry{rel mean err TT};
      \addplot table[x=sample,y expr=\thisrow{mean_err_l2_rel}] {\mc}; 		
      \addlegendentry{rel mean err MC};
      \addplot table[x=calls,y expr=\thisrow{cov_err_l2_rel}] {\tt}; 		
      \addlegendentry{rel cov err TT};
      
      \addplot table[x=calls,y expr=\thisrow{kl}] {\tt}; 		
      \addlegendentry{KL distance};
      \legend{}
    \end{loglogaxis}
    \end{tikzpicture}
  \end{center}
    \caption{Darcy example with $d=2$ (left) and $d=10$ (right). 
    Comparison of an MCMC method and the low-rank surrogate for computing the mean error $(\err_\mu)$ with respect to the number of calls to the solution of the forward problem. 
    The reference mean is computed with $10^6$ MCMC samples.
    Additionally the KL divergence is shown, which is computed using empirical integration.
    }
    \label{fig:poisson d}
  \end{figure*}
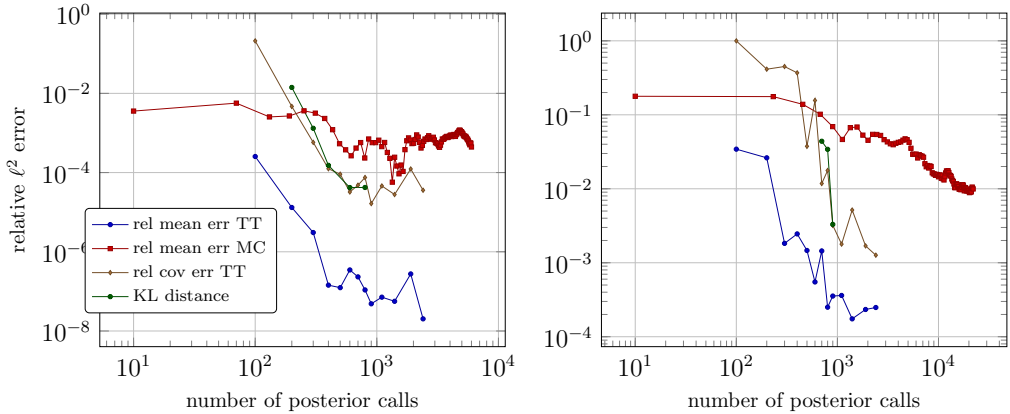
  
  The improvement of the mean and covariance estimate by the low-rank approach can already be observed for a low sample number.
  We note that the Monte Carlo estimate did not allow for an adequate computation of the empirical covariance, which therefore is left out of the comparison.
  
  \section{Conclusion} 
We developed a novel approach to approximate probability densities with high accuracy, combining the notion of \textit{transport maps} and \textit{low-rank functional representations} of auxiliary (perturbed) reference densities.
Based on a suiteable class of transformations, an approximation with respect to a finite tensorized basis can be carried out in extended hierachical tensor formats.
This yields a compressed representation for an efficient computation of statistical quantities (\eg moments or marginals) of interest in a sampling free manner.
In this work the multivariate polar transformation is used as a particular rank 1 stable transformation.
The method requires point evaluations of the perturbed reference density (up to a multiplicative constant).
The approach can hence be applied to not normalized posterior densities in the context of Bayesian inversion. 

We presented the application of the method to an inverse problem with a log-normal Darcy forward model.
A comparison with classical MCMC illustrates the superior convergence in terms of the moment accuracy relative to the number of posterior evaluations.
Future research will be concerned with
\begin{itemize}
 \item application: usage of the approximated densities for subsequent computations \eg with SGFEM, 
 \item analysis: Given a function $\tilde{f}_0$ it has to be examined which rank 1 stable transformations $\Phi$ lead to a low-rank function $\tilde{f}_0\circ \Phi$.
\end{itemize}

\section*{Acknowledgments}
The authors would like to thank Reinhold Schneider for fruitful discussions.

\bibliographystyle{plain}
\bibliography{references}

\end{document}